\newtheorem{theorem}{Theorem}[section]
\newtheorem{proposition}[theorem]{Proposition}
\newtheorem{lemma}[theorem]{Lemma}
\newtheorem{definition}[theorem]{Definition}
\newtheorem{remark}[theorem]{Remark}
\newtheorem{corollary}[theorem]{Corollary}
\newtheorem{thmx}{Theorem}
\newtheorem{corx}{Corollary}[thmx]
\newtheorem*{thmB}{Theorem \ref{effective_main_thm}}
\newtheorem*{thmC}{Theorem \ref{conj_sep_nil_wreath_product}}
\newtheorem*{thmD}{Theorem \ref{theorem:free_metabelian}}
\DeclareMathOperator{\C}{\mathcal{C}}
\DeclareMathOperator{\NC}{\mathcal{N_C}}
\DeclareMathOperator{\proC}{pro-\mathcal{C}}
\DeclareMathOperator{\supp}{supp}
\DeclareMathOperator{\RG}{RG}
\DeclareMathOperator{\Aut}{Aut}
\DeclareMathOperator{\D}{D}
\DeclareMathOperator{\Conj}{Conj}
\DeclareMathOperator{\SC}{SC}
\DeclareMathOperator{\Cyclic}{Cyclic}
\DeclareMathOperator{\Short}{ShortC}
\DeclareMathOperator{\Sym}{Sym}
\DeclareMathOperator{\Ball}{B}
\DeclareMathOperator{\core}{core}
\DeclareMathOperator{\Max}{max}
\DeclareMathOperator{\normleq}{\unlhd}
\title{Quantifying conjugacy separability in wreath products of groups}
\author{Michal Ferov}
\address[Michal Ferov]{School of Mathematical and Physical Sciences, University of Newcastle, University Drive, Callaghan, NSW 2308, Australia}
\email[Michal Ferov]{michal.ferov@gmail.com}
\author{Mark Pengitore}
\address[Mark Pengitore]{Department of Mathematics, University of Virginia, 141 Cabell Drive, 303 Kerchof Hall,
Charlottesville, VA 22902, USA}
\email[Mark Pengitore]{waj9cr@virginia.edu}
\date{March 2022}
\begin{document}

\begin{abstract}
    We study generalisations of conjugacy separability in restricted wreath products of groups. We provide an effective upper bound for $\C$-conjugacy separability of a wreath product $A \wr B$ in terms of the $\C$-conjugacy separability of $A$ and $B$, the growth of $\C$-cyclic subgroup separability of $B$, and the $\C$-residual girth of $B.$ As an application, we provide a characterisation of when $A \wr B$ is $p$-conjugacy separable. We use this characterisation to the provide for each prime $p$ an example of a wreath product with infinite base group that is $p$-conjugacy separable. We also provide asymptotic upper bounds for conjugacy separability for wreath products of nilpotent groups which include the lamplighter groups and provide asymptotic upper bounds for conjugacy separability of the free metabelian groups.
\end{abstract}

\maketitle
\tableofcontents

\section{Introduction}
Given an infinite group, it is natural to ask how much information can one recover by studying its finite quotients. For example, in the case of residually finite groups one can distinguish individual elements from each other using its finite quotients. We say that a group $G$ is \textbf{residually finite} if for every pair of distinct elements $f,g \in G$ there is a finite group $Q$ and a surjective homomorphism $\pi \colon G \to Q$ such that $\pi(f)$ and $\pi(g)$ remain distinct in $Q$.

Properties of this type are called \textbf{separability properties}: a subset $S \subseteq G$ is said to be \textbf{separable} in $G$ if for every $g \in G\setminus S$ there exists a finite group $F$ and a surjective homomorphism $\varphi \colon G \to F$ such that $\varphi(g) \notin \varphi(S)$ in $F$. Clearly, a group is residually finite if and only if the singleton sets are separable. Separability properties are defined by specifying what kind of subsets we want to be separable: \textbf{conjugacy separable} groups have separable conjugacy classes, \textbf{cyclic subgroup separable} groups have separable cyclic subgroups, \textbf{locally extended residually finite} (LERF) groups have separable finitely generated subgroups, etc. In this paper, we will be studying quantitative aspects of conjugacy separability and its generalisations.

\subsection{Motivation}
One of the original reasons for studying separability properties in groups is the fact that they provide an algebraic analogue to decision problems in finitely presented groups in the following way. If a $S \subseteq G$ is recursively enumerable separable subset where one can always effectively construct the image of $S$ under the canonical projection onto a finite quotient of $G$, one can then decide whether a word in the generators of $G$ represents an element belonging to $S$ simply by checking finite quotients. Indeed, it was proved by Mal'tsev \cite{malcev} by adapting the result of McKinsey \cite{mckinsey} that the word problem is solvable for finitely presented, residually finite groups in the following way. Given a finite presentation $\langle X \mid R \rangle$ and a word $w \in F(X)$, where $F(X)$ is the free group with the generating set $X$, one runs two algorithms in parallel. The first algorithm enumerates all the products of conjugates of the relators (and their inverses) and checks whether $w$ appears on the list, whereas the second algorithm enumerates all finite quotients of $G$ and checks whether the image of the element of $G$ represented by $w$ is nontrivial. In other words, the first algorithm is looking for a witness of the triviality of $w$ whereas the second algorithm is looking for a witness of the nontriviality of $w$. Using an analogous approach, Mostowski \cite{mostowski} showed that the conjugacy problem is solvable for finitely presented, conjugacy separable groups. In a similar fashion, finitely presented LERF groups have solvable generalised word problem, meaning that the membership problem is uniformly solvable for every finitely generated subgroup. In general, algorithms that involve enumerating finite quotients of an algebraic structure are called algorithms of \textbf{Mal'tsev-Mostowski type} or \textbf{McKinsey's} algorithms..

Most of the existing work has focused on verifying different classes of groups satisfy various separability properties. For instance, the following classes of groups are known to be conjugacy separable: virtually free groups (Dyer \cite{dyer}), virtually polycyclic groups (Formanek \cite{formanek_polycyclic}, Remeslennikov \cite{remeslennikov_polycyclic}), virtually surface groups (Martino \cite{armando}), limit groups (Chagas and Zalesskii \cite{limit}), finitely generated right angled Artin groups (Minasyan \cite{raags}), even Coxeter groups whose diagram does not contain $(4,4,2)$-triangles (Caprace and Minasyan \cite{racgs}), one-relator groups with torsion (Minasyan and Zalesskii \cite{1-rel}), fundamental groups of compact orientable 3-manifolds (Hamilton, Wilton, and Zalesskii \cite{compact}), etc.
Conjugacy separability is similar to residual finiteness but is much stronger. It can be easily seen that every conjugacy separable group is residually finite, but the implication in the opposite direction does not hold. Perhaps the easiest example of a residually finite group which is not conjugacy separable was given by Stebe \cite{stebe_sl3z} and independently by Remeslenikov \cite{remeslennikov} when they proved that $\text{SL}_3(\mathbb{Z})$ is not conjugacy separable.

In the light of the previous discussion, a natural question arises: how can one use residual properties such as residual finiteness and conjugacy separability to study finitely generated groups? One approach is by defining a function on the natural numbers that measures the complexity of establishing the residual property by taking the worst case over all words of length at most $n$.  While these complexity functions require the selection of a finite generating subset, the asymptotic growth rate as the parameter $n$ goes to infinity is well defined; see Section \ref{section:quantifying} for further discussion. For instance, for a residually finite, finitely generated group $G$, Bou-Rabee \cite{BouRabee10} introduced the function $\text{RF}_{G} \colon \mathbb{N} \to \mathbb{N}$ which quantifies the residual finiteness of $G$. Indeed, if $w$ represents a nontrivial element of $G$ of length at most $n$ with respect to some fixed generating subset, then there exists a surjective homomorphism $\varphi \colon G \to Q$ to a finite group such that $\varphi(w) \neq 1$ and where $|Q| \leq \text{RF}_G(n)$. 
Similarly, Lawton, Louder, and McReynolds \cite{LLM} introduced the function $\Conj_{G} \colon \mathbb{N} \to \mathbb{N}$ to quantify conjugacy separability. In particular, given two nonconjugate elements $w_1$ and $w_2$ in $G$ of length at most $n$, there exists a surjective homomorphism $\varphi \colon G \to Q$ to a finite group such that $\varphi(w_1)$ and $\varphi(w_2)$ are nonconjugate and where $|Q| \leq \Conj_G(n).$ Finally, one may quantify cyclic subgroup separability of $G$ with the function $\Cyclic_{G}(n).$ Indeed, if $w_1$ and $w_2$ are two words in $G$ of length at most $n$ such that $w_1$ is not an element of $\langle w_2 \rangle,$ then there exists a surjective homomorphism $\varphi \colon G \to Q$ to a finite group $Q$ such that $\varphi(w_1) \notin \varphi(\langle w_2 \rangle)$ and where $|Q| \leq \Cyclic_{G}(n)$. 
 
As we mentioned before, separability properties provide an algebraic analogue do decision problems in finitely presented groups. The separability depth function can be then understood as a measure of the complexity of the corresponding algorithm of Mal'tsev-Mostowski type. In particular, it allows to dispense of the algorithm which is looking for a positive witness. Let us demonstrate this on the word problem: suppose that we are given a finitely presented group $G$, a word $w$ in the generators of $G$, and we know that $\text{RF}_G(n) = f(n)$, where $f \colon \mathbb{N} \to \mathbb{N}$. We can then enumerate all finite quotients of $G$ of size up to $f(|w|)$. It then follows that if the image of $w$ is trivial in all such quotients is trivial, then $w$ must in fact represent the trivial element in $G$.
 
There are only a few results concerning the asymptotic behaviour of conjugacy separability for different classes of groups. Lawton, Louder, and McReynolds \cite{LLM} demonstrate that if $G$ is a nonabelian free group or the fundamental group of a closed oriented surface of genus $g \geq 2$, then $\Conj_G(n) \preceq n^{n^2}$. For the class of finitely generated nilpotent groups, the second named author and D\'{e}re \cite{Dere_Pengitore} demonstrate the following alternative. If $G$ is a finite extension of a finitely generated abelian group, then $\Conj_G(n) \preceq (\log(n))^d$ for some natural number $d$, and when $G$ is a virtually nilpotent group that is not virtually abelian, then there exist natural numbers $d_1$ and $d_2$ such that $n^{d_1} \preceq \Conj_G(n) \preceq n^{d_2}.$
Outside of these examples, no other asymptotic bounds of $\Conj_G(n)$ for any class of finitely generated groups have been computed.

Given a separability property, it is natural to ask whether this property is preserved by certain group theoretic constructions. Checking that the class of residually finite groups is closed under forming finite direct products is an easy exercise, similarly for conjugacy separability.  It was proved by Stebe \cite{stebe} and independently by Remeslennikov \cite{remeslennikov} that the class of conjugacy separable groups is closed under taking free products; the fist named author \cite{mf} proved that the class of conjugacy separable groups is closed under forming graph products, a group-theoretic construction generalising both direct and free products of groups. However, conjugacy separability is not in general stable with respect to forming group extensions: Goryaga \cite{goryaga} gave an example of a finite extension of a conjugacy separable group that is not conjugacy separable. 

The main aim of this paper is to study effective conjugacy separability and its behaviour with respect to the construction of restricted wreath products.
Given groups $A,B$, we will use $A \wr B$ to denote the \textbf{restricted wreath product} of $A$ and $B$, i.e.
    \begin{displaymath}
        A \wr B = \left(\bigoplus_{b \in B} A \right) \rtimes B.
    \end{displaymath}
The group $A$ will be sometimes referred to as the \textbf{base group} of the wreath product $A \wr B$ and the group $B$ will be sometimes referred to as the \textbf{acting} or \textbf{wreathing} group of the wreath product $A \wr B$. As this paper only deals with restricted wreath products, we will drop the term ``restricted'' as there is no possibility of confusion.

In general, a wreath product of residually finite groups does not have to be residually finite, as shown by Gruenberg \cite{gruenberg} who gave the following characterisation of residually finite wreath products: a wreath product $A \wr B$ is a residually finite group if and only if either $A$ is residually finite and $B$ is finite or $A$ is an abelian residually finite group and $B$ is residually finite (see Theorem \ref{theorem:gruenberg}).
This theorem was later generalised by Remeslennikov \cite[Theorem 1]{remeslennikov} who gave a characterisation of conjugacy separable wreath products of groups by showing that a wreath product $A \wr B$ is conjugacy separable if and only if either $A$ is conjugacy separable and $B$ is finite or $A$ is a residually finite abelian group and $B$ is conjugacy separable and every cyclic subgroup of $B$ is separable in $B$. We extend Remeslennikov's result in two ways: we provide upper bounds on the conjugacy separability depth function in terms of the separability of functions of the factor groups and we do so in the general setting of $\C$-separability where $\C$ is an extension-closed pseudovariety of finite groups, meaning that as a corollary we obtain a characterisation of $\C$-conjugacy separable wreath products.

\subsection{Statement of the main result}
The notion of separability can be generalised in a natural way by considering only certain kinds of finite groups: let $\mathcal{C}$ be a class of groups, we then say that a subset $S \subseteq G$ is $\mathcal{C}$-separable in $G$ if for every $g \in G \setminus S$ there is a group $Q \in \mathcal{C}$ and a surjective homomorphism $\pi \colon G \to Q$ such that $\pi(g)$ does not belong to $\pi(S)$. We consider extension-closed pseudovarieties of finite groups (see Section \ref{section:proC_topologies} for the formal definition); typical examples of such classes include the class of all finite groups, the class of all finite $p$-groups where $p$ is a prime, or the class of all finite solvable groups.

Given an extension-closed psuedovariety of finite groups $\C$, we say a group $G$ is \textbf{residually-$\C$} if the singleton set $\{1\}$ is $\C$-separable, $\C$-\textbf{conjugacy separable} if each conjugacy class is $\C$-separable, and $\C$-\textbf{cyclic subgroup separable} if each cyclic subgroup is $\C$-separable. With these definitions in mind,  we prove the following generalisation of Remeslennikov's theorem (see Theorem \ref{theorem:remeslennikov}).
\begin{thmx}\label{theorem:CCS}
    Let $\C$ be an extension-closed psuedovariety of finite groups, and suppose that $A$ and $B$ are $\C$-conjugacy separable groups. Then $A \wr B$ is $\C$-conjugacy separable if and only if at least one of the following is true
    \begin{enumerate}
        \item  $B \in \C$
        \item $A$ is abelian and $B$ is $\C$-cyclic subgroup separable.
    \end{enumerate}
\end{thmx}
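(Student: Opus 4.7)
The plan is to prove the two implications separately, leveraging a $\proC$-analogue of Gruenberg's Theorem \ref{theorem:gruenberg} for the necessity and an effective bound (the paper's main technical result, Theorem \ref{effective_main_thm}) for the sufficiency.

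\textbf{Necessity.} Suppose $A \wr B$ is $\C$-conjugacy separable and $B \notin \C$. Since $\C$-conjugacy separability implies residually-$\C$, the generalised Gruenberg theorem forces $A$ to be abelian, as the nonabelian case fails to be residually-$\C$ once $B \notin \C$. To see that $B$ must additionally be $\C$-cyclic subgroup separable, suppose to the contrary that some $g \in B$ lies outside $\langle b \rangle$ but inside the closure of $\langle b \rangle$ in the pro-$\C$ topology of $B$. Pick any $a \in A \setminus \{1\}$, and let $\delta_x$ denote the base-group element with value $a$ at position $x$ and trivial elsewhere. I would examine the pair $\delta_1 \cdot b$ and $(\delta_1 + \delta_g) \cdot b$: a direct computation using the translation action of $B$ on the base group shows these are conjugate in $A \wr B$ if and only if $g \in \langle b \rangle$, yet they must become conjugate in every $\C$-quotient of $A \wr B$ by the standing hypothesis, producing the desired contradiction.

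\textbf{Sufficiency.} In case (1), the base group is the finite direct power $A^{|B|}$, which is $\C$-conjugacy separable because $A$ is, and a standard centraliser argument shows that extensions by a group in $\C$ preserve $\C$-conjugacy separability. Case (2), with $A$ abelian, is the substantive part and is packaged as Theorem \ref{effective_main_thm}: when $A$ is abelian one parametrises conjugacy in $A \wr B$ by a conjugacy class in $B$ together with a coset of a $\mathbb{Z}[C_B(b)]$-submodule of the base group, and then separates non-conjugate pairs in $\C$-quotients by first matching the $B$-components (using $\C$-conjugacy separability of $B$) and then resolving the residual module-coset obstruction (using $\C$-cyclic subgroup separability of $B$ to control the cyclic centraliser orbits generated by $b$).

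\textbf{Main obstacle.} The hard part is case (2) of sufficiency. The module-theoretic conjugacy invariant for abelian $A \wr B$ involves infinite orbits under cyclic subgroups of $B$, and to detect its nontriviality inside a \emph{single} finite $\C$-quotient one must simultaneously control $\C$-quotients of $B$ that respect cyclic subgroup separability, supports in the base group, and centraliser data. Organising this interaction cleanly — which is precisely the technical content of Theorem \ref{effective_main_thm}, together with the $\C$-residual girth and $\C$-cyclic separability growth appearing in the abstract — is the principal difficulty.
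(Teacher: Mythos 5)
Your overall architecture matches the paper's: abelianness of $A$ is forced by the residually-$\C$ analogue of Gruenberg's theorem (Lemma \ref{lemma:abelian_necessity}), and sufficiency in the abelian case is delegated to the effective statement, Theorem \ref{effective_main_thm}, exactly as the paper does. However, your necessity argument for $\C$-cyclic subgroup separability of $B$ contains a genuine error: the witness pair $\delta_1 b$ and $(\delta_1+\delta_g)b$ does not work. Writing $A$ additively, elements $fb$ and $f'b$ with $f,f'\in A^B$ are conjugate in $A\wr B$ iff $f'-{}^{c}f\in K_b=\{[h,b]\mid h\in A^B\}$ for some $c\in C_B(b)$, and by Lemma \ref{lemma:commutator_criterion} membership in $K_b$ is detected by the sums of the function over right cosets of $\langle b\rangle$. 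For your pair these coset sums are ($a$ on one coset) versus ($a$ on each of two cosets, or $2a$ on one coset when $g\in\langle b\rangle$); no translation by $c\in C_B(b)$ matches them, so the two elements are non-conjugate for \emph{every} choice of $g$, and — fatally for your argument — they remain non-conjugate in $A\wr(B/N)$, where the sums become $a$ versus $2a$. No contradiction is produced. The fix, which is what the paper does, is to make the two support values cancel: compare $b$ with $[\delta_1,g]\,b$, where $[\delta_1,g]$ takes value $a$ at $1$ and $a^{-1}$ at $g^{-1}$. Since $g\notin\langle b\rangle$, these lie in distinct $\langle b\rangle$-cosets, the coset sums are $a$ and $a^{-1}$, so $[\delta_1,g]\notin K_b$ and the elements are non-conjugate; but in any $\C$-quotient $\bar g\in\langle\bar b\rangle$, the two values fall into the same coset and cancel, so the image lies in $K_{\bar b}=\bar b^{\,A^{B/N}}\bar b^{-1}$ and the images become conjugate. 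The cancellation of $a$ against $a^{-1}$ is the whole point, and your pair omits it.

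A second, smaller gap: in sufficiency case (1) you assert that extensions by a group in $\C$ preserve $\C$-conjugacy separability via a standard centraliser argument. That general statement is false — Goryaga's example, cited in the introduction, is a finite extension of a conjugacy separable group that is not conjugacy separable. The paper instead handles this case by a wreath-product-specific analysis: Lemma \ref{lemma:conjugacy_criterion_cyclic_wreath} reduces conjugacy of $f_1b$ and $f_2b$ inside $\langle A^B,b\rangle$ to conjugacy in $A$ of the products of values over $\langle b\rangle$-orbits, and Proposition \ref{lemma:g_in_H_open_conj_eff} then passes from the $\C$-open subgroup $\langle A^B,b\rangle$ up to $A\wr B$. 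Your sketch of case (2) is consistent in spirit with the paper's coset-of-$K_b$ analysis, but the two concrete steps above need to be repaired before the proof stands.
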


We may rephrase Theorem \ref{theorem:CCS} using the language of effective $\C$-separability which we outline with the following discussion.  For a general extension-closed psuedovariety of finite groups $\C$, we may quantify $\C$-conjugacy separability and $\C$-cyclic subgroup separability using the functions $\Conj_{G,\C}(n)$ and $\Cyclic_{G,\C}(n)$ which are defined similarly as $\Conj_G(n)$ and $\Cyclic_G(n)$. Moreover, we observe that if $G$ is a finitely generated residually-$\C$ group, then for all $n$ there exists a surjective homomorphism $\varphi \colon G \to Q$ where $Q \in \C$ such that $\varphi$ restricted to the ball of radius $n$ centred around the identity is injective. Subsequently, we obtain a function $\text{RG}_{G,\C} \colon \mathbb{N} \to \mathbb{N}$ which we call the \textbf{$\C$-residual girth function} which quantifies how difficult is it to detect the $n$-ball of $G$ using the pseudovariety $\C$. As a last note, we introduce the function $\Short_G(n)$ which measures the length of the shortest conjugator between two words of length at most $n$. Taking all of this together, the next theorem provides an asymptotic bound for $\Conj_{A \wr B, \C}$ in terms of the asymptotic behaviour of $\C$-conjugacy separability of $A$ and $B$, the asymptotic behaviour of $\C$-cyclic subgroup separability of $B$ given by $\Cyclic_{B,\C}(n)$, the residual girth function $\text{RG}_{B,\C}(n)$, and $\Short_B(n).$ For the exact formal definitions of the above functions and related asymptotic notions, see Section \ref{effective_sep}.

\begin{thmx}
\label{effective_main_thm}
  Let $A$ and $B$ be $\C$-conjugacy separable groups where $\C$ is an extension-closed psuedovariety of finite groups. Then $A \wr B$ is $\C$-conjugacy separable if and only if $B \in \C$ or if $A$ is abelian and $B$ is $\C$-cyclic subgroup separable. 
    
    If $B \in \C$, then $$
    \Conj_{A \: \wr B, \C}(n) \preceq (\Conj_{A,\C}(n))^{|B|^3}
    $$ 
    
    Let $\Phi(n) =  \Short_B(n) + n$ and 
    $$
    \Psi(n) = (\RG_{B,\C}(\Phi(n))\cdot \left(\Cyclic_{B,\C}(\Phi(n))^{\Phi(n)^2}\right)
    $$
    If $A$ is an infinite, finitely generated abelian group and $B$ is a $\C$-cyclic subgroup separable finitely generated group,
    then
    $$
    \Conj_{A \wr B ,S,\C}(n) \preceq \Max\left\{\Conj_{B,\C}(n), (\Psi(n) \cdot  \Conj_{A,\C}(\Psi(n) \cdot n))^{(\Psi(n))^3}\right\}.
    $$ 
    
    If $A$ is a finite abelian
    group and $B$ is a $\C$-cyclic subgroup separable finitely generated group,
    then
    $$
      \Conj_{A \wr B,\C}(n) \preceq \Max\left\{\Conj_{B,\C}(n),\Psi(n) \cdot 2 ^{\Psi(n)}\right\}.
      $$
\end{thmx}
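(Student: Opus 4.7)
The plan is to prove Theorem \ref{effective_main_thm} by analysing conjugacy in $A \wr B$ through the standard description: two elements $(f_1, b_1)$ and $(f_2, b_2)$ are conjugate if and only if $b_1 \sim_B b_2$ and, after conjugating to align the $B$-components so that $b_1 = b_2 = b$, the cycle invariants over the $\langle b \rangle$-orbits on $\supp(f_1) \cup \supp(f_2)$ coincide. For abelian $A$ these invariants are the partial sums $\sum_{j=0}^{|O|-1} f(b^j x_O)$ indexed by $\langle b \rangle$-orbits $O$ with chosen representatives $x_O$; in the case with $B$ finite the general (possibly nonabelian) version yields $A$-conjugacy classes of ordered cycle products. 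The biconditional is already supplied by Theorem \ref{theorem:CCS}, so the work is to produce effective quotients witnessing non-conjugacy.

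For the bound when $B \in \C$, I first handle $b_1 \not\sim_B b_2$ via the projection $A \wr B \to B$. Otherwise I reduce to $b_1 = b_2 = b$, and for each of the at most $|B|$ orbits $O$ where the cycle invariants disagree, I choose $N_O \normleq A$ with $A/N_O \in \C$ witnessing that the cycle products (of word length $O(n|B|)$) have distinct images in $A/N_O$; this costs $|A/N_O|$ at most $\Conj_{A,\C}(n)$ up to factors absorbed in $\preceq$. Intersecting over orbits gives $|A/N| \preceq (\Conj_{A,\C}(n))^{|B|}$, and composing with the quotient map $A \wr B \to (A/N) \wr B$ yields a target of size at most $(\Conj_{A,\C}(n))^{|B|^2}\cdot |B|$, well within $(\Conj_{A,\C}(n))^{|B|^3}$.

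For the abelian $A$ case, I use the projection $A \wr B \to B$ to dispose of the subcase $b_1 \not\sim_B b_2$ at cost $\Conj_{B,\C}(n)$. Otherwise, $\Short_B(n)$ provides a short conjugator that normalises to $b_1 = b_2 = b$, so that both supports lie in the ball of radius $\Phi(n) = \Short_B(n) + n$ in $B$. I then build $N \normleq B$ with $B/N \in \C$ guaranteeing (i) the ball of radius $\Phi(n)$ embeds into $B/N$, contributing $\RG_{B,\C}(\Phi(n))$ to $[B:N]$, and (ii) for each of the at most $\Phi(n)^2$ ordered pairs of distinct support elements $x,y$ with $xy^{-1} \notin \langle b \rangle$, the image $\pi(xy^{-1})$ remains outside $\pi(\langle b \rangle)$ in $B/N$, contributing $(\Cyclic_{B,\C}(\Phi(n)))^{\Phi(n)^2}$. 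Conditions (i) and (ii) ensure that the $\langle \pi(b) \rangle$-orbit structure on $\pi(\supp f_i)$ faithfully reflects the $\langle b \rangle$-orbit structure on $\supp f_i$, so that distinct cycle invariants remain distinct after projection. Now $[B:N] \leq \Psi(n)$, and projecting to $A \wr (B/N)$ reduces us to the finite-$B$ bound with acting group of size at most $\Psi(n)$ and word length at most $\Psi(n)\cdot n$, giving the stated estimate after taking $\Max$ with $\Conj_{B,\C}(n)$. For finite abelian $A$ the $A$-factor becomes $|A|^{\Psi(n)}\cdot \Psi(n) \preceq \Psi(n) \cdot 2^{\Psi(n)}$.

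The principal obstacle is establishing condition (ii): one must carefully translate the orbit/cycle-sum conjugacy obstruction in $A \wr B$ into a separability requirement on cyclic subgroups of $B$, and then verify that the cycle-sum invariants computed inside $A \wr (B/N)$ genuinely detect non-conjugacy in $A \wr B$ whenever (i) and (ii) hold. Residual girth alone is insufficient, since without cyclic-subgroup separability two distinct $\langle b \rangle$-orbits meeting the support could merge in the quotient, causing cycle invariants to spuriously coincide; the interplay of $\RG_{B,\C}$ and $\Cyclic_{B,\C}$ in the definition of $\Psi(n)$ is precisely what rules this out and drives the quantitative estimate.
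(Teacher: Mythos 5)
Your sufficiency argument and the quantitative bookkeeping follow essentially the same route as the paper: the conjugacy criterion via cycle products/sums over $\langle b \rangle$-cosets (the paper's Lemmas \ref{lemma:conjugacy_criterion_cyclic_wreath}, \ref{lemma:commutator_criterion} and \ref{lemma:commute_support}), reduction of the infinite-$B$ case to a finite quotient $B/N$ chosen so that the ball of radius $\Phi(n)$ embeds and the relevant $\langle b\rangle$-cosets stay distinct (the paper's Lemmas \ref{separate_support_effective} and \ref{separation_cosets}), and then an appeal to the finite-$B$ case. One small omission in your finite-$B$ analysis: after aligning the $B$-components to a common $b$, the conjugator may still carry a $B$-part lying in $C_B(b)$, which permutes the $\langle b\rangle$-orbits; so non-conjugacy does not mean the cycle invariants of $f_1$ and $f_2$ disagree for a single alignment, but that they disagree for every translate $d\cdot f_1$ with $d\in C_B(b)$, and the chosen quotient of $A$ must witness all of these simultaneously. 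The paper handles this by working in the $\C$-open subgroup $\langle A^B, b\rangle$ and invoking Proposition \ref{lemma:g_in_H_open_conj_eff} over a full set of coset representatives, which is where the exponent $|B|^3$ comes from; your orbit-by-orbit count should be augmented accordingly, though the slack in the stated exponent absorbs the correction.

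The genuine gap is the ``only if'' direction of the biconditional. You defer it to Theorem \ref{theorem:CCS}, but in the paper Theorem \ref{theorem:CCS} is deduced as a corollary of Theorem \ref{effective_main_thm}, so this reference is circular; and independently of the paper's ordering, the necessity of $\C$-cyclic subgroup separability of $B$ requires a substantive argument that your proposal never supplies. Concretely, one must show that if $\langle b\rangle$ is not $\C$-closed in $B$, then $b^{A\wr B}$ is not $\C$-closed in $A\wr B$: the paper does this by picking $b'$ in the pro-$\C$ closure of $\langle b\rangle$, taking $g\in A^B$ supported at $1$ with a nontrivial value, and showing that $h=[g,b']$ lies outside $K_b=[A^B,\langle b\rangle]$ (by the cycle-sum criterion) yet lies in the pro-$\C$ closure of $K_b$, since in every $\C$-quotient $\pi(b')$ becomes a power of $\pi(b)$. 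Without this construction (and the companion fact, via Lemma \ref{lemma:abelian_necessity}, that $A$ must be abelian when $B\notin\C$), the characterisation asserted in the theorem is not proved.
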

$\:$ \newline

\subsection{Applications}

We now proceed to applications of Theorems \ref{theorem:CCS} and  \ref{effective_main_thm}.

In the context of the psuedovariety of finite $p$-groups where $p$ is some prime, we are able to make a stronger statement. In order to do so, we go into a discussion of $p$-cyclic subgroup separability. Suppose that $n \in \mathbb{N}$ such that $\gcd(p,n) = 1$. Let $\mathbb{Z} = \langle a \rangle$ be the infinite cyclic group, and suppose that $\pi_k \colon \mathbb{Z} \to \mathbb{Z} / p^k \: \mathbb{Z}$ is the natural projection. Clearly, $\pi_k(a^n)$ generates $ \mathbb{Z} / p^k \: \mathbb{Z}$ so the subgroup $\langle a^n \rangle$ is not separable in the pro-$p$ topology on $\mathbb{Z}$ - on the contrary, it is dense in the pro-$p$ topology on $\mathbb{Z}$. In fact, a subgroup of the infinite cyclic group is $p$-separable if and only if its index is a power of $p$. It follows that if a residually-$p$ finite group $G$ contains an element of infinite order, then it contains cyclic subgroups that are not separable in its pro-$p$ topology. This means that if $G$ is a group such that every cyclic subgroup of $G$ is $p$-separable, then $G$ must be a $p$-group. In general, groups with all subgroups being $p$-separable are extremely rare - as far as the authors are aware, the only examples are the Grigorchuk's 2-group in the case of pro-2 topology (see \cite[Theorem 2]{grigorchuk}) and the Gupta-Sidki 3-group in the case of pro-3 topology (see \cite[Theorem 2]{garrido_GS}). However, cyclic subgroups of a $p$-group are finite; hence, being residually-$p$ implies that all cyclic subgroups are separable in the pro-$p$ topology. Following this discussion, we can state the following two corollaries of Theorem \ref{theorem:CCS}.
\setcounter{thmx}{1} 
\begin{corx}
	Suppose that $A,B$ are $p$-conjugacy separable groups. Then $A \wr B$ is $p$-conjugacy separable if and only if at least one of the following is true
    \begin{itemize}
        \item[(i)] $B$ is a finite $p$-group,
        \item[(ii)] $A$ is abelian and $B$ is a $p$-group.
    \end{itemize}
\end{corx}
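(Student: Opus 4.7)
The plan is to deduce this corollary directly from Theorem~\ref{theorem:CCS} applied to the extension-closed pseudovariety $\Np$ of finite $p$-groups, with the only real content being the translation of ``$\Np$-cyclic subgroup separable'' into ``is a $p$-group'' under our standing hypothesis that $B$ is $p$-conjugacy separable (and hence residually-$p$).

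First I would specialise Theorem~\ref{theorem:CCS} to $\C = \Np$. Condition~(1) of that theorem becomes $B \in \Np$, i.e.\ $B$ is a finite $p$-group, which is exactly condition~(i) of the corollary. Condition~(2) becomes: $A$ is abelian and $B$ is $\Np$-cyclic subgroup separable. So it suffices to show that, assuming $B$ is $p$-conjugacy separable, $B$ is $\Np$-cyclic subgroup separable if and only if $B$ is a $p$-group. This will identify condition~(2) with condition~(ii) and complete the proof.

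For the forward direction, suppose $B$ is $\Np$-cyclic subgroup separable and, towards a contradiction, that $B$ contains an element $b$ of infinite order. Since $B$ is $p$-conjugacy separable, it is in particular residually-$p$, so $\langle b \rangle \cong \mathbb{Z}$ embeds into $B$. By the observation in the paragraph preceding the corollary, the subgroup $\langle b^n \rangle$ of $\langle b\rangle$ is not $p$-separable in $\langle b\rangle$ for any $n$ coprime to $p$ with $n>1$: the restriction of any homomorphism from $B$ onto a finite $p$-group to $\langle b\rangle$ factors through a finite cyclic $p$-group, in which $b^n$ and $b$ have the same image. Hence $\langle b^n\rangle$ is not $p$-separable in $B$ either, contradicting $\Np$-cyclic subgroup separability. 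Therefore every element of $B$ has finite order, and since $B$ is residually-$p$, every element of $B$ has order a power of $p$, so $B$ is a $p$-group.

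For the backward direction, suppose $B$ is a $p$-group and $p$-conjugacy separable (hence residually-$p$). Every cyclic subgroup $\langle b\rangle \leq B$ is finite of $p$-power order, so it is a finite subgroup of a residually-$p$ group. A standard argument then shows $\langle b \rangle$ is $\Np$-separable in $B$: for each $x \in B\setminus \langle b\rangle$, residual-$p$-ness applied to the finitely many nontrivial elements $x b^{-i}$ ($i=0,\dots,|\langle b\rangle|-1$) yields, after intersecting the corresponding finite-index normal subgroups in $\Np$ (which is extension-closed), a surjection onto a finite $p$-group separating $x$ from $\langle b\rangle$. Thus $B$ is $\Np$-cyclic subgroup separable. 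Combining the two directions with Theorem~\ref{theorem:CCS} gives the corollary; the only mild subtlety is the forward direction above, but it is immediate from the elementary example with $\mathbb{Z}$ already spelled out in the paper.
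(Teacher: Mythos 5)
Your proposal is correct and follows essentially the same route as the paper: specialise Theorem \ref{theorem:CCS} to the pseudovariety of finite $p$-groups and then observe that, for a residually-$p$ group $B$, being $\Np$-cyclic subgroup separable is equivalent to being a $p$-group (an element of infinite order forces non-separable cyclic subgroups via the $\mathbb{Z}$ example, while in a torsion residually-$p$ group all cyclic subgroups are finite of $p$-power order and hence separable). This is precisely the argument sketched in the paragraph preceding the corollary in the paper, so nothing further is needed.
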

\begin{corx}
    The group $\mathbb{Z} \wr \mathbb{Z}$ is conjugacy separable but not $p$-conjugacy separable for any prime $p$.
\end{corx}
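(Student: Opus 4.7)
The plan is to apply the two immediately preceding structural characterisations directly; this is essentially a bookkeeping exercise, and the main (modest) step is to verify the relevant hypotheses hold for $\mathbb{Z}$ in each of the two settings.

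For the positive assertion, I would apply Theorem \ref{theorem:CCS} with $\C$ the pseudovariety of all finite groups and $A = B = \mathbb{Z}$. Since $\mathbb{Z}$ is abelian, every conjugacy class is a singleton, and since $\mathbb{Z}$ is residually finite, it is trivially conjugacy separable. Moreover, every cyclic subgroup of $\mathbb{Z}$ is of the form $n\mathbb{Z}$, which is the kernel of the projection onto the finite group $\mathbb{Z}/n\mathbb{Z}$; hence every cyclic subgroup of $\mathbb{Z}$ is separable, and $B$ is $\C$-cyclic subgroup separable. Condition (2) of Theorem \ref{theorem:CCS} therefore holds, and $\mathbb{Z} \wr \mathbb{Z}$ is conjugacy separable.

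For the negative assertion, I would fix a prime $p$ and apply the immediately preceding corollary (the $p$-analogue of Theorem \ref{theorem:CCS}) with $A = B = \mathbb{Z}$. Both factors are $p$-conjugacy separable: they are abelian, so $p$-conjugacy separability reduces to being residually-$p$, and $\mathbb{Z}$ is residually-$p$ because for any nonzero $m \in \mathbb{Z}$ one can choose $k$ with $p^{k} \nmid m$, so that the image of $m$ in $\mathbb{Z}/p^{k}\mathbb{Z}$ is nontrivial. Thus the hypotheses of the corollary are in force. Now condition (i) fails, because $B = \mathbb{Z}$ is infinite and therefore not a finite $p$-group, and condition (ii) fails, because $\mathbb{Z}$ is torsion free and so no nontrivial element has $p$-power order, meaning $\mathbb{Z}$ is not a $p$-group. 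Since neither (i) nor (ii) holds, the corollary forces $\mathbb{Z} \wr \mathbb{Z}$ to fail $p$-conjugacy separability.

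There is no serious obstacle here: once Theorem \ref{theorem:CCS} and the preceding corollary are in hand, the proof is almost entirely a matter of checking that $\mathbb{Z}$ is (a) abelian, (b) residually finite and residually-$p$, (c) cyclic-subgroup separable in the full profinite topology, and (d) not a $p$-group of any flavour. The only datum needing even a line of justification is the residual-$p$-ness of $\mathbb{Z}$, which is immediate from the projections $\mathbb{Z} \twoheadrightarrow \mathbb{Z}/p^{k}\mathbb{Z}$.
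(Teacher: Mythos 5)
Your proposal is correct and follows the same route the paper intends: the positive half is Theorem \ref{theorem:CCS} (equivalently Remeslennikov's theorem) applied with $A=B=\mathbb{Z}$, and the negative half is the preceding $p$-corollary together with the observation that $\mathbb{Z}$ is not a $p$-group, which is exactly the paper's reasoning (the paper's surrounding discussion phrases the obstruction as the non-$p$-separability of subgroups $n\mathbb{Z}$ with $\gcd(n,p)=1$, which is the same fact underlying condition (ii) failing). The only cosmetic nit is that the trivial cyclic subgroup $0\cdot\mathbb{Z}$ is separable by residual finiteness rather than as the kernel of a projection onto a finite quotient.
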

Let us note here that an abelian group is $p$-conjugacy separable if and only if it is residually $p$-finite. It was proved by Wilson and Zalesskii \cite{gupta-sidki_are_cs} that the Gupta-Sidki $p$-group $\mathop{GS}(p)$ is $p$-conjugacy separable for every prime $p \geq 3$, and Leonov \cite{leonov} proved that the Grigorchuk's 2-group is $2$-conjugacy separable. These two results lead us to the following third corollary of Theorem \ref{theorem:CCS}.
\begin{corx}
    Let $p$ be a prime and let $G_p$ be the Grigorchuk's 2-group if $p=2$ and Gupta-Sidki $p$-group if $p \geq 3$. Then the group $\mathbb{Z}^m \wr G_p$ is $p$-conjugacy separable.
\end{corx}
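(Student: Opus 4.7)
The plan is to apply the first of the three corollaries above, which characterises $p$-conjugacy separability of a wreath product, with $A = \mathbb{Z}^m$ and $B = G_p$. Since $\mathbb{Z}^m$ is abelian, the relevant case is case (ii): I must verify that $G_p$ is a $p$-group and that both $\mathbb{Z}^m$ and $G_p$ are $p$-conjugacy separable.

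First I would verify the hypothesis on the base group. Since $\mathbb{Z}^m$ is abelian, every conjugacy class is a singleton, so $p$-conjugacy separability is equivalent to being residually-$p$ (as noted in the paragraph preceding the corollaries). This is immediate from the natural projections $\mathbb{Z}^m \twoheadrightarrow (\mathbb{Z}/p^k\mathbb{Z})^m$ for $k \in \mathbb{N}$, whose kernels intersect trivially.

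Next I would handle the wreathing group. That $G_p$ is a $p$-group is a structural property of the two families: every element of Grigorchuk's 2-group has order a power of $2$, and every element of the Gupta--Sidki $p$-group has order a power of $p$. The $p$-conjugacy separability of $G_p$ is exactly the content of the two results invoked immediately before the corollary: Leonov \cite{leonov} for the Grigorchuk 2-group when $p=2$, and Wilson and Zalesskii \cite{gupta-sidki_are_cs} for the Gupta--Sidki $p$-group when $p \geq 3$.

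With both hypotheses of case (ii) confirmed, the first corollary yields the $p$-conjugacy separability of $\mathbb{Z}^m \wr G_p$. There is no substantive obstacle here; the depth of the statement lies entirely in the supporting results (the characterisation corollary derived from Theorem \ref{effective_main_thm}, together with the Leonov and Wilson--Zalesskii theorems providing $p$-conjugacy separability of the exotic wreathing factor). The proof reduces to a direct citation-and-application argument.
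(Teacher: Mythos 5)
Your proposal is correct and follows exactly the route the paper intends: the corollary is a direct application of the characterisation of $p$-conjugacy separable wreath products (case (ii), since $\mathbb{Z}^m$ is abelian and $G_p$ is a $p$-group), with the $p$-conjugacy separability of $G_p$ supplied by Leonov for $p=2$ and Wilson--Zalesskii for $p\geq 3$, and that of $\mathbb{Z}^m$ following from residual $p$-finiteness of abelian groups. Nothing is missing.
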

\addtocounter{thmx}{1} 

An application of Theorem \ref{effective_main_thm} is to compute upper bounds on conjugacy separability depth functions of wreath products of finitely generated nilpotent groups. As a consequence, we provide the first known upper bounds for conjugacy separability depth function of the lamplighter group and, in turn, conjugacy separability depth function of finitely generated but not finitely presentable conjugacy separable groups. \newline

\begin{thmx}
\label{conj_sep_nil_wreath_product}
    Let $A$ be a finitely generated abelian group, and suppose that $B$ is an infinite, finitely generated nilpotent group. If $B$ is abelian, then
    $$
     \Conj_{A \wr B}(n) \preceq n^{n^{n^2}},
    $$
    and if $A$ is finite, then
    $$
    \Conj_{A \wr B}(n) \preceq  2^{n^{n^2}}.
    $$
    Otherwise, then there exists a natural number $d$ such that
    $$
    \Conj_{A \wr B}(n) \preceq n^{n^{n^d}}.
    $$
    Moreover, if $A$ is finite, then
    $$
    \Conj_{A \wr B}(n) \preceq  2^{n^{n^d}}.
    $$
\end{thmx}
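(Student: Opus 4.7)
The plan is to invoke Theorem \ref{effective_main_thm} with $\C$ the pseudovariety of all finite groups, so that $\Conj_{A\wr B,\C}=\Conj_{A\wr B}$ and similarly for the other quantities. Since a finitely generated nilpotent group is classically cyclic subgroup separable (Mal'cev), the hypotheses of Theorem \ref{effective_main_thm} are in force in every case listed. The theorem then reduces the problem to producing asymptotic bounds for the auxiliary functions $\Conj_B$, $\Cyclic_B$, $\RG_B$, $\Short_B$, and $\Conj_A$, and then doing the arithmetic with the resulting towers.

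On the $B$-side, I would gather polynomial upper bounds for each auxiliary function: $\Conj_B(n)\preceq n^{e_1}$ from Dere--Pengitore; $\Cyclic_B(n)\preceq n^{e_2}$ and $\RG_B(n)\preceq n^{e_3}$ from direct analysis using the polynomial growth of $B$ together with the standard Mal'cev coordinate system on a finitely generated nilpotent group; and $\Short_B(n)\preceq n^{e_4}$ for the short conjugator function (refined to $\Short_B(n)\preceq 1$ if $B$ is abelian, since then any two conjugate elements are equal). On the $A$-side, for $A$ a finitely generated infinite abelian group one has $\Conj_A(n)=\RF_A(n)\preceq (\log n)^{e_5}$, and $\Conj_A(n)\preceq 1$ if $A$ is finite.

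Next I would compute $\Phi(n)=\Short_B(n)+n$ and $\Psi(n)=\RG_B(\Phi(n))\cdot \Cyclic_B(\Phi(n))^{\Phi(n)^2}$ case by case. When $B$ is abelian, $\Phi(n)\preceq n$ and
\[
\Psi(n)\preceq n^{e_3}\cdot (n^{e_2})^{n^2}=n^{e_3+e_2 n^2}\preceq n^{n^2},
\]
while if $B$ is merely nilpotent, $\Phi(n)\preceq n^{e_4}$ and the analogous estimate yields $\Psi(n)\preceq n^{n^d}$ for a constant $d$ depending only on $B$. Substituting into Theorem \ref{effective_main_thm}, when $A$ is infinite abelian, $\log(\Psi(n)\cdot n)$ is polynomial in $n$ so $\Conj_A(\Psi(n)\cdot n)$ is absorbed and $\Psi(n)\cdot\Conj_A(\Psi(n)\cdot n)\preceq \Psi(n)$; similarly $\Psi(n)^3\preceq \Psi(n)$. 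The bound therefore collapses to $\Psi(n)^{\Psi(n)}$, whose exponent at the second level is $n^d\cdot n^{n^d}\preceq n^{n^d}$, producing $n^{n^{n^2}}$ in the abelian case and $n^{n^{n^d}}$ in general, and the dominating $\Conj_B(n)$ term is harmless. When $A$ is finite, the bound $\Psi(n)\cdot 2^{\Psi(n)}\preceq 2^{\Psi(n)}$ gives $2^{n^{n^2}}$ and $2^{n^{n^d}}$, respectively.

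The main obstacle I expect is securing the polynomial bounds $\Cyclic_B(n)\preceq n^{e_2}$ and $\Short_B(n)\preceq n^{e_4}$ for arbitrary finitely generated nilpotent $B$: these likely require an induction along the lower central series, together with a careful analysis of centralisers and of intersections of cyclic subgroups with finite-index normal subgroups in Mal'cev coordinates. Once these estimates are in hand, and given the already-known polylogarithmic residual finiteness and polynomial conjugacy separability of nilpotent groups, the rest of the argument is straightforward bookkeeping with the $\preceq$ relation on towers of exponentials, exploiting that $\preceq$ absorbs polynomial factors in the base and constant factors in the exponent.
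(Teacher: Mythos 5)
Your proposal is correct and follows essentially the same route as the paper: specialise Theorem \ref{effective_main_thm} to the pseudovariety of all finite groups, feed in polynomial bounds for $\RG_B$, $\Cyclic_B$, $\Short_B$ and the logarithmic bound for $\Conj_A$, and carry out the tower arithmetic, splitting into the abelian/nonabelian $B$ and finite/infinite $A$ cases exactly as you do. The only difference is that the auxiliary polynomial bounds you flag as the ``main obstacle'' are not re-derived in the paper but simply quoted from the literature (Bou-Rabee--Studenmund for the residual girth, the cyclic subgroup separability bound for nilpotent groups, and Macdonald et al.\ for the short conjugator function), so no new induction along the lower central series is needed.
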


\begin{corx}
    \label{corollary:lamplighter}
    If $G$ is the lamplighter group, then $\Conj_G(n) \preceq  2^{n^{n^2}}.$
\end{corx}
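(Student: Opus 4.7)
The plan is to observe that the lamplighter group is, by definition, $G = (\mathbb{Z}/2\mathbb{Z}) \wr \mathbb{Z}$, and then apply Theorem \ref{conj_sep_nil_wreath_product} with base group $A = \mathbb{Z}/2\mathbb{Z}$ and wreathing group $B = \mathbb{Z}$. The corollary is therefore a direct specialization of Theorem D, and the task reduces to verifying that the hypotheses hold and to locating the correct branch of the statement.

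First, I would check the hypotheses: $A = \mathbb{Z}/2\mathbb{Z}$ is a finite (in particular finitely generated) abelian group, and $B = \mathbb{Z}$ is an infinite, finitely generated abelian group, so in particular an infinite, finitely generated nilpotent group of class one. Hence the standing assumptions of Theorem \ref{conj_sep_nil_wreath_product} are satisfied.

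Next, I would identify the correct bound. Since $B = \mathbb{Z}$ is abelian, we are in the first branch of Theorem \ref{conj_sep_nil_wreath_product}, so that $\Conj_{A \wr B}(n) \preceq n^{n^{n^2}}$; but because $A = \mathbb{Z}/2\mathbb{Z}$ is also finite, the improved estimate from the same branch applies and yields
\[
\Conj_G(n) = \Conj_{A \wr B}(n) \preceq 2^{n^{n^2}},
\]
which is exactly the claim. There is no real obstacle here: all of the work is carried by Theorem \ref{conj_sep_nil_wreath_product}, and the corollary is essentially an instantiation. The only care needed is to note that $\mathbb{Z}$ is both abelian (triggering the $n^2$ exponent rather than the $n^d$ exponent coming from nilpotent groups of higher class) and that $\mathbb{Z}/2\mathbb{Z}$ is finite (triggering the base $2$ in place of the base $n$).
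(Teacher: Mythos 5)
Your proposal is correct and matches the paper's intended argument exactly: the lamplighter group is $(\mathbb{Z}/2\mathbb{Z}) \wr \mathbb{Z}$, and the bound follows by instantiating the abelian-$B$, finite-$A$ branch of Theorem \ref{conj_sep_nil_wreath_product}. Nothing further is needed.
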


As a final application, we provide an asymptotic upper bound for effective conjugacy separability for the free metabelian group. \newline
\begin{thmx}
\label{theorem:free_metabelian}
    If $S_{m,2}$ is the free metabelian group of rank $m$, then 
    $$
    \Conj_{S_{m,2}}(n) \preceq n^{n^{n^2}}.
    $$ 
\end{thmx}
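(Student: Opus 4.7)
The plan is to reduce to the wreath product $\mathbb{Z}^m \wr \mathbb{Z}^m$ via the classical Magnus embedding and then invoke Theorem~\ref{conj_sep_nil_wreath_product}. Recall that $S_{m,2}=F_m/F_m''$ and that the Magnus embedding provides an injective homomorphism $\iota\colon S_{m,2}\hookrightarrow \mathbb{Z}^m \wr \mathbb{Z}^m$ sending each free generator of $S_{m,2}$ to an element of bounded word length in the natural generating set of the wreath product. In particular, $\iota$ is Lipschitz: there is a constant $C=C(m)$ such that any element of $S_{m,2}$ of length at most $n$ has image of length at most $Cn$ in $\mathbb{Z}^m \wr \mathbb{Z}^m$.

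The key input I would use is the classical theorem of Remeslennikov that the Magnus image is \emph{conjugacy distinguished}: two elements $u,v\in S_{m,2}$ are conjugate in $S_{m,2}$ if and only if $\iota(u)$ and $\iota(v)$ are conjugate in $\mathbb{Z}^m \wr \mathbb{Z}^m$. This is what allows the conjugacy separability bounds for the wreath product to pull back to $S_{m,2}$: given a non-conjugate pair $u,v\in S_{m,2}$ of length at most $n$, the pair $\iota(u),\iota(v)$ is non-conjugate in $\mathbb{Z}^m \wr \mathbb{Z}^m$ with length at most $Cn$ there, and any finite quotient $\pi\colon \mathbb{Z}^m \wr \mathbb{Z}^m \to Q$ separating them up to conjugacy restricts, via $\pi\circ\iota$, to a finite quotient of $S_{m,2}$ of size at most $|Q|$ in which the images of $u$ and $v$ remain non-conjugate.

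With this reduction in hand, the bound follows from Theorem~\ref{conj_sep_nil_wreath_product} applied with $A=B=\mathbb{Z}^m$. Both factors are finitely generated abelian, so in particular $B$ is an infinite, finitely generated nilpotent group that is abelian, and the first case of that theorem gives
$$
\Conj_{\mathbb{Z}^m \wr \mathbb{Z}^m}(n) \preceq n^{n^{n^2}}.
$$
Combining with the preceding paragraph yields $\Conj_{S_{m,2}}(n) \leq \Conj_{\mathbb{Z}^m \wr \mathbb{Z}^m}(Cn) \preceq n^{n^{n^2}}$, as required.

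The main obstacle is the appeal to the conjugacy distinguished property of the Magnus embedding; this is a classical but non-trivial result of Remeslennikov, forming the core of his original proof that free metabelian groups are conjugacy separable, and would need to be cited carefully (or reproved in the $\C$-context if one wanted a generalisation). Everything else — verifying the Lipschitz constant of $\iota$, restricting finite quotients along $\iota$, and plugging into Theorem~\ref{conj_sep_nil_wreath_product} — is routine bookkeeping.
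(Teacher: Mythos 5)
Your proposal is correct and follows essentially the same route as the paper: both use the bi-Lipschitz Magnus embedding $S_{m,2}\hookrightarrow \mathbb{Z}^m\wr\mathbb{Z}^m$, Remeslennikov's result that this embedding is conjugacy distinguished, the restriction of separating finite quotients along the embedding (formalised in the paper as Lemma~\ref{conj_sub_effective}), and the abelian case of Theorem~\ref{conj_sep_nil_wreath_product} applied to $\mathbb{Z}^m\wr\mathbb{Z}^m$. The only cosmetic difference is that the paper dispatches the trivial case $m=1$ separately.
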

\subsection{Organisation of the paper}
We recall some basic preliminary notions in Sections \ref{section:proC_topologies}, \ref{section:quantifying} and \ref{section:wreath_products}. In particular, in Section \ref{section:proC_topologies} we recall the notion of profinite and $\proC$ topologies on groups and review the classical results that allow us to use topological methods when working with separability properties; readers familiar with $\proC$ topologies might feel free to skip this section. In Section \ref{section:quantifying}, we recall the basic notions of effective separability where we define the functions $\Conj_{G,\C}(n)$, $\Cyclic_{G,\C}(n)$, and $\text{RG}_{G,\C}(n)$.  In Section \ref{section:wreath_products}, we recall the notation for wreath products of groups and review known properties such as the structure of $\C$-quotients of wreath products of groups. While doing so, we reprove \cite[Theorem 3.2]{gruenberg}.

In Section \ref{section:separating}, we prove the main result of this note, Theorem \ref{effective_main_thm}. The proof is split into two cases: Subsection \ref{subsection:B_finite} deals with the case when the acting group $B$ is finite and Subsection \ref{subsection:B_infinite} deals with the case when the acting group $B$ is infinite. Most of the proofs in this Section \ref{section:separating} are effective versions of the proofs given in \cite{remeslennikov} generalised to the setting of $\C$-separability. In fact, we obtain Theorem \ref{theorem:CCS} as an corollary of Theorem \ref{effective_main_thm}.

In Section \ref{section:wreath_product_of_nipotent_groups} we turn to wreath products of nilpotent groups. We recall known upper bounds on the on the length of minimal conjugators in finitely generated nilpotent group and the separability depth function for cyclic subgroups. Combining these two bounds, we obtain Theorem \ref{conj_sep_nil_wreath_product} giving an upper bound on conjugacy separability depth function in the case when $A$ is an abelian group and $B$ is finitely generated nilpotent group. As a corollary, we obtain upper bounds on conjugacy depth in the lamplighter group $(\mathbb{Z} / 2 \mathbb{Z}) \wr \mathbb{Z}$.

Finally, in Section \ref{section:metabelian}, we use the Magnus embedding $\rho \colon S_{m,d} \to \mathbb{Z}^m \wr S_{m, d-1}$, where $S_{m,d}$ is the free solvable group of rank $k$ and derived length $d$, to give upper bounds on the conjugacy depth function of free metabelian groups.

\subsection{Notation}
If $G$ is a group, then $1_G$ denotes the identity element, and  when  group $G$ is clear from context, we simply write $1$ as the identity. For elements $g,h \in G$, we will use $g^h$ to denote $h g h^{-1}$, the $h$-conjugate of $g$. Similarly, for a subgroup $H \leq G$, we will use $g^H$ to denote $\{hgh^{-1} \mid h \in H\}$. If $h \in g^G$, we write $g \sim_G h$, and when the group $G$ is clear from context, we will simply write $g \sim h.$ For two elements $g,h \in G$, we will use $[g,h] = ghg^{-1}h^{-1}$ to denote their commutator. If $G$ is a group with a normal subgroup $H$, we denote $\pi_H:G \to G/H$ as the canonical projection. Given a group $G$ with an element $g \in G$, we denote $C_G(g)$ as the centralizer of $g$ in $G$.

In this note, the natural numbers $\mathbb{N}$ include zero.\\

Standard notions and concepts that are part of the usual mathematical folklore will be denoted in \textbf{bold}, whereas terminology and concepts specific to this paper, will be given their own numbered definitions.

\section{pro-$\C$ topologies on groups}
\label{section:proC_topologies}
This section contains basic facts about $\proC$ topologies on groups. We include it to make the paper self-contained, and experts can feel free to skip it. Proofs of all of the statements can be found in the classic book by Ribes and Zalesskii \cite{rz} or in the first named author's doctoral thesis \cite{mf_thesis}.

Let $\mathcal{C}$ be a class of groups, and let $G$ be a group. We say that a normal subgroup $N \unlhd G$ is a \textbf{co-$\mathcal{C}$ subgroup} of $G$ if $G/N \in \mathcal{C}$, and we denote 
$\NC(G)$ as the set of co-$\mathcal{C}$ subgroups of $G$. 

Consider the following closure properties for a class of groups $\mathcal{C}$:
	\begin{itemize}
	    \item[(c0)] $\mathcal{C}$ is closed under taking finite subdirect 
	    products,
	    \item[(c1)]	$\mathcal{C}$ is closed under taking subgroups,
	    \item[(c2)]	$\mathcal{C}$ is closed under taking finite direct products.
	\end{itemize}
Note that 
\[(c0) \Rightarrow  (c2)\qquad\text{and}\qquad (c1)+ (c2) \Rightarrow (c0).\]
\begin{remark}
\label{remark:intersections}
If the class $\mathcal{C}$ satisfies (c0), then for every group $G$ the set $\NC(G)$ is closed under finite intersections. In particular, if $N_1, N_2\in\NC(G)$, then also $N_1\cap N_2\in\NC(G)$.
\end{remark}
Following the previous remark, we see that whenever the class $\C$ is closed under forming subdirect products, we have that $\NC(G)$ is a base at $1$ for a topology on $G$. Hence, the group $G$ can be equipped with a group topology where the base of open sets is given by 
\[\{gN \mid g\in G,  N \in \NC(G)\}.\] We denote this topology by pro-$\mathcal{C}(G)$ and it is called the \textbf{pro-$\mathcal{C}$ topology} on $G$.

If the class $\mathcal{C}$ satisfies (c1) and (c2), or equivalently, (c0) and (c1), then one can easily see that equipping a group $G$ with its pro-$\mathcal{C}$ topology is a faithful functor from the category of groups to the category of topological groups, as witnessed by the following lemma.
\begin{lemma}
	\label{lemma:continuous}
	Let $\C$ be a class of groups satisfying (c1) and (c2). Given groups $G$ and $H$, every morphism $\varphi\colon G\to H$ is a continuous map with respect to the corresponding pro-$\mathcal{C}$ topologies. Furthermore, if $\varphi$ is an isomorphism, then it is a homeomorphism.
\end{lemma}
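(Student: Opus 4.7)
The plan is to reduce continuity to a statement about basic open neighborhoods of the identity, and then show that the preimage of a co-$\C$ subgroup under a group homomorphism is itself co-$\C$. Since the pro-$\C$ topology on any group is invariant under left translations (left multiplication by a fixed element is a homeomorphism because cosets $gN$ form a basis), it is enough to prove that $\varphi$ is continuous at $1_G$. That is, for every $M \in \NC(H)$ I need to exhibit some $N \in \NC(G)$ with $\varphi(N) \subseteq M$, or equivalently $N \subseteq \varphi^{-1}(M)$.

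The natural candidate is $N := \varphi^{-1}(M)$ itself, which is normal in $G$ since $M$ is normal in $H$. By the first isomorphism theorem, the map $\varphi$ descends to an \emph{injective} homomorphism $\overline{\varphi}\colon G/N \hookrightarrow H/M$. Since $H/M \in \C$ and $\C$ is closed under taking subgroups by (c1), I conclude that $G/N \in \C$, hence $N \in \NC(G)$. Therefore $\varphi^{-1}(M) = N$ is itself a basic open neighborhood of $1_G$, and continuity at the identity follows at once. Translating this statement by an arbitrary element $g \in G$, one sees that for any basic open neighborhood $\varphi(g)M$ of $\varphi(g)$, the preimage is $gN$, which is open; so $\varphi$ is continuous on all of $G$.

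For the isomorphism statement, the point is simply that the inverse $\varphi^{-1}\colon H \to G$ is itself a group homomorphism, so the first part of the lemma applies to it as well, making $\varphi^{-1}$ continuous. Hence $\varphi$ is a homeomorphism.

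The argument is entirely routine and the only thing to be careful about is making explicit where each of the closure hypotheses enters: (c2) (together with the paragraph preceding the lemma) is what guarantees that $\NC(G)$ is closed under finite intersections and hence really forms a neighborhood basis at $1_G$, so that the pro-$\C$ topology is defined at all; whereas (c1) is precisely what is needed to push the quotient $H/M$ down to the subquotient $G/\varphi^{-1}(M)$ while remaining inside $\C$. There is no genuine obstacle here, so I would present the proof as a short two-step verification.
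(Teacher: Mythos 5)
Your proof is correct and is the standard argument; the paper itself does not prove this lemma (it defers to Ribes--Zalesskii and the first author's thesis), and your reduction to showing $\varphi^{-1}(M)\in\NC(G)$ via the first isomorphism theorem and (c1) is exactly the expected route. One tiny imprecision: closure of $\NC(G)$ under finite intersections comes from (c0), i.e.\ from (c1) and (c2) jointly (the quotient $G/(N_1\cap N_2)$ is a subdirect product inside $G/N_1\times G/N_2$), not from (c2) alone.
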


\begin{definition}
A subset $X \subseteq G$ is $\mathcal{C}$-\textbf{closed} in $G$ if $X$ is closed in pro-$\mathcal{C}(G)$.  We say that a subset $X \subseteq G$ is $\C$-\textbf{separable} if it is $\C$-closed. Accordingly, a subset is $\mathcal{C}$-\textbf{open} in $G$ if it is open in pro-$\mathcal{C}(G)$.
\end{definition}
\begin{lemma}
Suppose that $G$ is a finitely generated group equipped with the pro-$\C$ topology, and suppose that $X \subset G$ is a nonempty subset. Then $X$ is $\mathcal{C}$-closed if and only if for every element $g \notin X$, there exists a subgroup $N \in \NC(G)$ such that $\pi_N(g) \notin \pi_N(X)$ in $G/N$ where $\pi_N \colon G \to G/N$ is the natural projection.
\end{lemma}

We say that a group $G$ is
\begin{itemize}
    \item \textbf{residually-$\C$} if $\{1\}$ is a $\C$-closed subset of $G$;
    \item \textbf{$\C$-conjugacy separable} if every conjugacy class is $\C$-closed;
    \item \textbf{$\C$-cyclic subgroup separable} if every cyclic subgroup is $\C$-closed.
\end{itemize}
Closure properties of the class $\C$ are closely related to the stability of some $\C$-separability properties, as can be witnessed by the following remark.
\begin{remark}
\label{remark:direct_product_residually_C}
    If the class $\C$ is closed under forming direct products, then both the class of residually-$\C$ groups and the class of $\C$ conjugacy separable groups is closed under forming direct products.
\end{remark}

In this paper, we consider classes of finite groups such as the class of all finite groups or of all finite $p$-groups where $p$ is some prime. These two classes of finite groups are examples of extension-closed psuedovarieties of finite groups as seen in the following definition.

A class of finite groups that is closed under subgroups, finite direct products, quotients, and extensions is called an \textbf{extension-closed pseudovariety of finite groups}. From this point onward, we will always assume that the class $\C$ is an extension closed pseudovariety of finite groups.

In the following lemma, we collect known facts about open and closed subgroups. In particular, we reference \cite[Theorem 3.1, Theorem 3.3]{MHall50}.

\begin{lemma}
\label{lemma:C-open}
\label{lemma:C-closed}
    Let $G$ be a group, and let $H \leq G$. Then
    \begin{itemize}
        \item[(i)] $H$ is $\mathcal{C}$-open in $G$ if and only if there is a subgroup $N \in \NC(G)$ such that $N \leq H$; moreover, every  $\mathcal{C}$-open subgroup is $\mathcal{C}$-closed in $G$ and $|G:H| < \infty$;
        \item[(ii)] $H$ is $\C$-closed in $G$ if and only if $H$ is an intersection of open subgroups.
    \end{itemize}
\end{lemma}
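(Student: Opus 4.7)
The plan is to exploit the explicit description of the pro-$\C$ topology via the basis $\{gN \mid g \in G,\ N \in \NC(G)\}$ given earlier in the section. Both parts are essentially topological, and the key technical point is that every $N \in \NC(G)$ is normal, so cosets of $N$ and products of the form $HN$ are well-behaved.

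For (i), the forward direction follows from the fact that if $H$ is $\C$-open then $1 \in H$ is an interior point, so some basic open neighbourhood $1 \cdot N = N$ with $N \in \NC(G)$ must be contained in $H$. Conversely, if $N \in \NC(G)$ satisfies $N \leq H$, then I would write $H = \bigcup_{h \in H} hN$, which exhibits $H$ as a union of basic open sets. For the ``moreover'' clause I would observe that when $N \leq H$, the complement $G \setminus H$ is the union of the remaining (finitely many, since $G/N \in \C$ is finite) cosets of $N$ that are not already contained in $H$; each such coset $gN$ is open, which makes $G \setminus H$ open and simultaneously forces $[G:H] \leq [G:N] < \infty$.

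For (ii), the backward direction is immediate: every $\C$-open subgroup is $\C$-closed by (i), and an arbitrary intersection of closed sets is closed. For the forward direction, suppose $H$ is $\C$-closed, and for each $g \in G \setminus H$ choose $N_g \in \NC(G)$ with $gN_g \cap H = \emptyset$ (using the definition of $\C$-closedness). Since $N_g$ is normal, the set $HN_g$ is a subgroup of $G$, and it contains $N_g \in \NC(G)$, so by (i) it is $\C$-open. I would then verify that $g \notin HN_g$: if $g = hn$ with $h \in H$ and $n \in N_g$, then $h = gn^{-1} \in gN_g \cap H$, contradicting the choice of $N_g$. Hence the trivially valid inclusion $H \subseteq \bigcap_{g \notin H} HN_g$ is in fact an equality, so $H$ is an intersection of $\C$-open subgroups.

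I do not anticipate any real obstacle: everything reduces to manipulating cosets modulo normal subgroups in $\NC(G)$ and invoking the basis description of the pro-$\C$ topology. The one small subtlety worth flagging explicitly is the use of normality of $N_g$ in part (ii) to ensure that $HN_g$ is a subgroup; without it, the argument would not produce subgroups, only open sets.
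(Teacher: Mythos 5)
Your proof is correct. The paper does not actually prove this lemma --- it only cites \cite[Theorems 3.1, 3.3]{MHall50} --- and your argument is precisely the standard one behind that citation: extract $N \in \NC(G)$ from openness at $1$, write $H$ and its complement as unions of $N$-cosets for the ``moreover'' clause, and in (ii) use normality of $N_g$ to make $HN_g$ an open subgroup excluding $g$. No gaps; the one point you flag (normality of $N_g$) is indeed the only subtlety.
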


Given a group $G$ and a subgroup $H \leq G$ one can easily check that if $X \subseteq H$ is $\C$-closed in $G$ then it is $\C$-closed in $H$. Unfortunately, the implication in the opposite direction does not hold: the Bauslag-Solitar group $\text{BF}(2,3)$ given by the presentation
\begin{displaymath}
    \text{BF}(2,3) = \langle a, t \mid t a^2 t^{-1} = a^3 \rangle
\end{displaymath}
is a well known example of a group that is not residually finite, meaning that the singleton set $\{1\}$ is not closed in the profinite topology on $\text{BF}(2,3)$. However, the cyclic subgroup generated by element $a$ is isomorphic to the integers and therefore $\{1\}$ is $\C$-closed in $\langle a \rangle$, meaning that, given a group $G$ and a subgroup $H \leq G$, $\proC(H)$ might be strictly finer than the subspace topology induced on $H$ by $\proC(G)$. This motivates the following definition.

Let $G$ be a group, and let $H \leq G$. We say that $\proC(H)$ is a \textbf{restriction} of $\proC(G)$ if $\proC(H)$ coincides with the subspace topology induced on $H$ by $\proC(G)$. In other words, $\proC(H)$ is a restriction of $\proC(G)$ if for every subset $X \subseteq H$ we have that $X$ is $\C$-closed in $H$ if and only if it is $\C$-closed in $G$.

Note that if $\proC(H)$ is a restriction of $\proC(G)$, then $H$ is $\C$-closed in $G$ as $H$ is $\C$-closed in $H$ by definition.
\begin{lemma}
    \label{lemma:open_restriction}
    Let $G$ be a group, and let $H \leq G$ be $\C$-open in $G$. Then $\proC(H)$ is a restriction of $\proC(H)$.
\end{lemma}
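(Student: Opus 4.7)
The statement as typed contains an obvious typo; the intended conclusion is that $\proC(H)$ is a restriction of $\proC(G)$, i.e.\ for every $X\subseteq H$ we have $X$ is $\C$-closed in $H$ if and only if $X$ is $\C$-closed in $G$.

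The forward direction ($\C$-closed in $G$ implies $\C$-closed in $H$) should be routine and does not use openness: given $h\in H\setminus X$ and a witness $N\in\NC(G)$ with $hN\cap X=\emptyset$, I would take $N\cap H$. This is normal in $H$, and the second isomorphism theorem gives an embedding $H/(N\cap H)\hookrightarrow G/N$, which lies in $\C$ because pseudovarieties are subgroup-closed. Then $h(N\cap H)\cap X\subseteq hN\cap X=\emptyset$.

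For the reverse direction, suppose $X\subseteq H$ is $\C$-closed in $H$ and fix $g\in G\setminus X$. I would split on whether $g\in H$. If $g\notin H$, then since $H$ is $\C$-open, by Lemma \ref{lemma:C-open} it is also $\C$-closed in $G$, so some $N\in\NC(G)$ separates $g$ from $H$ and \emph{a fortiori} from $X$. The substantive case is $g\in H$: by hypothesis, there is some $K\in\NC(H)$ with $gK\cap X=\emptyset$, and I need to upgrade $K$ to a $\C$-open neighborhood of $g$ \emph{in $G$}. Using the other half of Lemma \ref{lemma:C-open}, pick $M\in\NC(G)$ with $M\leq H$, and set $J=K\cap M$. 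Then $J\normleq H$ with $H/J\in\C$ (as a subdirect product of $H/K$ and $H/M\hookrightarrow G/M$). Now form the normal core
\[
N=\bigcap_{gH\in G/H} gJg^{-1},
\]
which is a genuine finite intersection because $H$ itself normalises $J$, so $[G:N_G(J)]\leq[G:H]<\infty$; by construction $N\normleq G$ and $N\leq J\leq K$, so $gN\cap X\subseteq gK\cap X=\emptyset$.

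The key step, and the only one that really uses the full strength of the hypothesis on $\C$, is verifying that $N\in\NC(G)$. My plan is to use the short exact sequence $1\to M/N\to G/N\to G/M\to 1$. The right term lies in $\C$ by choice of $M$. For the left term, I would observe that for each coset representative $g$, the conjugate $gKg^{-1}$ is normal in $gHg^{-1}$ and $M\leq gHg^{-1}$ (since $M$ is normal in $G$ and contained in $H$), so $M\cap gKg^{-1}\normleq M$ with quotient embedding into $gHg^{-1}/gKg^{-1}\cong H/K\in\C$. Taking the intersection over coset representatives embeds $M/N$ into a finite product of groups in $\C$, so $M/N\in\C$ by subgroup- and product-closure. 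Extension-closure of the pseudovariety then yields $G/N\in\C$.

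The only real obstacle is the bookkeeping in this last paragraph: making sure the core is actually a finite intersection (which rests on $H\leq N_G(J)$) and that one extracts both $M/N\in\C$ and $G/M\in\C$ in a form where extension-closure applies cleanly. Everything else is a direct unwinding of the definitions.
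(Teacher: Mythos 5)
Your argument is correct, and it is essentially the approach the paper itself uses: the lemma is stated without proof (deferred to Ribes--Zalesskii and the first author's thesis), but the paper's proof of its quantitative counterpart, Lemma \ref{lemma:closed_C-open_separability}, runs exactly your way --- pass to a normal core that is a finite intersection of conjugates, and certify it lies in $\NC(G)$ via a short exact sequence over a co-$\C$ subgroup $M\leq H$ together with extension-closure. The only cosmetic wrinkle is your switch from $J$ to $K$ in the last paragraph, which is harmless since $M\cap gKg^{-1}=gJg^{-1}$ by normality of $M$.
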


\section{Quantifying $\C$-separability}\label{effective_sep}
\label{section:quantifying}
Given a finitely generated group $G$ with finite generating subset $S$, one can define the \textbf{word length} function $\|\cdot\|_S \colon G \to \mathbb{N}$ as
\begin{displaymath}
    \|g\|_S = \min \{ |w| \mid w \in F(S) \mbox{ and } w =_G g\}.    
\end{displaymath}
Word-length is a standard tool in geometric group theory used to equip $G$ with a left-invariant metric $d_{S} \colon G \times G \to \mathbb{N}$ given by $d_{S}(g_1, g_2) = \| g_1^{-1} g_2\|_S$. We will use $\Ball_{G,S}(n)$ to denote the ball of radius $n$ centred around the identity, i.e. $\Ball_{G,S}(n) = \{g \in B \mid \|g\|_S \leq n\}$.

We start by introducing the following definition.
\begin{definition}
\label{def:depth_function}
Let $G$ be a group and assume that $X \subset G$ is a nonempty proper $\C$-separable subset of $G$. For $g \in G \backslash X$, we let
$$
\D_{G,\C}(X,g) = \text{min}\{ [G:N] \: | \: N \in \mathcal{N}_{\C}(G) \text{ and } \pi_N(g) \notin \pi_N(X) \}.
$$
We call $\D_{G,\C} \colon G \backslash X \to \mathbb{N} \cup \{\infty\}$ the \textbf{$\C$-depth function} of $G$ relative to $X.$

\end{definition}

For a finite set $X \leq G$, we set $$
\D_{G,\C,inj}(X) = \text{min}\{ [G:N] \: | \: N \in \mathcal{N}_{\C}(G) \text{ and } \pi_N \text{ restricted to } X \text{ is injective.} \}
$$
We call $\D_{G,\C, inj}(X)$ the $\C$-injectivity of $X$ in $G$. Consequently, we may define the following function which quantifies the how difficult it is to inject the ball of radius $n$ into a finite quotient.
\begin{definition}
\label{def:residual_girth}
Let $G$ be a finitely generated residually-$\C$ group with a finite generating subset $S$. We define \textbf{$\C$-residual girth function} $\RG_{G,\C,S} \colon \mathbb{N} \to \mathbb{N}$ of $G$ as
$$
\RG_{G,\C,S}(n) = \D_{G,\C,inj}(\Ball_{G,S}(n)).
$$
\end{definition}
\begin{definition}
\label{def:conjugacy_depth}
Let $G$ be a finitely generated $\C$-conjugacy separable group with a finite generating subset $S$. We define \textbf{$\C$-conjugacy separability depth function} $\Conj_{G,S,\C} \: : \mathbb{N} \to \mathbb{N}$ as
$$
\Conj_{G,S, \C }(n) = \Max\{ \D_{G,\C}(g^G,h) \: | \: h \notin g^G \text{ and } \left\|g\right\|_S,\|h\|_S \leq n \}.
$$
\end{definition}
\begin{definition}
\label{def:cyclic_depth}
Let $G$ be a finitely generated $\C$-cyclic subgroup separable group $G$ with a finite generating subset $S$, we define \textbf{$\C$-cyclic subgroup separability depth function} $\Cyclic_{G,S,\C} \: : \mathbb{N} \to \mathbb{N}$ as
$$
\Cyclic_{G,S,\C}(n) = \Max\{ \D_{G,\C}(\langle g \rangle,h) \: | \: h \notin \langle g \rangle \text{ and } \|g\|_S,\|h\|_S \leq n \}.
$$
\end{definition}

We note that all of the above defined functions depend on the generating set $S$. However, one can easily check that the asymptotic behaviour does not depend on the choice of generating set. Letting $f,g \colon \mathbb{N} \to \mathbb{N}$ be nondecreasing functions, we write $f \preceq g$ if there is a constant $C \in \mathbb{N}$ such that $f(n) \leq C g(Cn)$ for all $n \in \mathbb{N}$. If $f \preceq g$ and $g \preceq f$, we then write $f \approx g$. It is well known that a change of a generating set is an quasi-isometry: if $S_1, S_2 \subset G$ are two finite generating sets of a group $G$ then $\|\cdot\|_{S_1} \approx \|\cdot\|_{S_2}$. The same holds for the separability functions for $\C$-conjugacy separability, $\C$-cyclic subgroup separability, and the $\C$-residual girth function, as demonstrated by the following lemma.
\begin{lemma}
    \label{lemma:equivalence}
    Let $G$ be finitely generated group with finite generating sets $S_1$ and $S_2$. If $G$ is $\C$-conjugacy separable, then $\Conj_{G,S_1,\C}(n) \approx \Conj_{G,S_2,\C}(n)$. Similarly, if $G$ is a $\C$-cyclic subgroup separable group, then $\Cyclic_{G,S_1,\C}(n) \approx \Cyclic_{G,S_2,\C}(n).$ Finally, if $G$ is a residually-$\C$ group, then we have that $\RG_{G,\C,S_1}(n) \approx \RG_{G,\C,S_2}(n)$
\end{lemma}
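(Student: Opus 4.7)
The plan is to reduce all three statements to the single observation that the word metrics associated to two finite generating sets $S_1, S_2$ on $G$ are bi-Lipschitz equivalent, and that the depth quantities $\D_{G,\C}(X,g)$ and $\D_{G,\C,inj}(X)$ appearing in the definitions do not themselves depend on a generating set — they depend only on the abstract group $G$, the subset $X \subseteq G$, and the element $g$. So the only way the generating set enters any of the three functions is through the length constraints $\|\cdot\|_S \leq n$, and changing $S$ merely rescales those constraints.

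First I would fix a constant $C \in \mathbb{N}$ with the standard property that $\|g\|_{S_2} \leq C\|g\|_{S_1}$ and $\|g\|_{S_1} \leq C\|g\|_{S_2}$ for every $g \in G$; such a $C$ exists by writing each element of $S_1$ as a word of bounded length in $S_2$ and vice versa. This inclusion at the level of balls reads $\Ball_{G,S_1}(n) \subseteq \Ball_{G,S_2}(Cn)$ and symmetrically.

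For $\Conj_{G,S,\C}$, I would observe that any pair $(g,h)$ with $\|g\|_{S_1}, \|h\|_{S_1} \leq n$ and $h \notin g^G$ also satisfies $\|g\|_{S_2}, \|h\|_{S_2} \leq Cn$, so such a pair contributes to the maximum defining $\Conj_{G,S_2,\C}(Cn)$. Since the quantity $\D_{G,\C}(g^G,h)$ being maximised is independent of the generating set, this gives $\Conj_{G,S_1,\C}(n) \leq \Conj_{G,S_2,\C}(Cn)$. Swapping the roles of $S_1$ and $S_2$ yields the reverse inequality, hence $\Conj_{G,S_1,\C} \approx \Conj_{G,S_2,\C}$. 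The argument for $\Cyclic_{G,S,\C}$ is verbatim the same, with $\D_{G,\C}(\langle g \rangle, h)$ replacing $\D_{G,\C}(g^G,h)$.

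For $\RG_{G,\C,S}$, the argument is even shorter: $\RG_{G,\C,S}(n) = \D_{G,\C,inj}(\Ball_{G,S}(n))$, and if $X \subseteq Y$ then any $N \in \NC(G)$ for which $\pi_N$ is injective on $Y$ is also injective on $X$, so $\D_{G,\C,inj}(X) \leq \D_{G,\C,inj}(Y)$. Applying this to $\Ball_{G,S_1}(n) \subseteq \Ball_{G,S_2}(Cn)$ gives $\RG_{G,\C,S_1}(n) \leq \RG_{G,\C,S_2}(Cn)$, and symmetry finishes the job. There is no real obstacle here — the proof is bookkeeping, and the whole point is to justify suppressing the generating set in the $\preceq$ notation used throughout the rest of the paper; accordingly, as the authors note, the lemma is routine enough to omit in full detail.
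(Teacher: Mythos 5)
Your argument is correct: the depth quantities $\D_{G,\C}(\cdot,\cdot)$ and $\D_{G,\C,inj}(\cdot)$ depend only on the abstract group, so the bi-Lipschitz comparison of word lengths immediately yields $\Conj_{G,S_1,\C}(n) \leq \Conj_{G,S_2,\C}(Cn)$ and its analogues, which is exactly the $\preceq$ relation as defined. The paper explicitly omits this proof as routine, and your write-up is the standard argument the authors intend.
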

\begin{proof}
    Since the proofs of the above statements are analogous to each other, we will provide the proof only for conjugacy depth function.
    
    Set $C_1 = \max_{s \in S_2}\{\|s\|_{S_1}\}$. Clearly, $\|g\|_{S_2} \leq C \|g\|_{S_1}$. We immediately see that
    \begin{displaymath}
        \Conj_{G,S_2,\C}(n) \leq \Conj_{G,S_1,\C}(C_1 n) \leq C_1\Conj_{G,S_1,\C}(C_1 n),    
    \end{displaymath}
    meaning that $\Conj_{G,S_2,\C}(n) \preceq \Conj_{G,S_1,\C}(n)$. In a similar manner, we by setting $C_2 = \max_{s \in S_1}\{\|s\|_{S_2}\}$ immediately see that
    \begin{displaymath}
        \Conj_{G,S_1,\C}(n) \leq \Conj_{G,S_2,\C}(C_2 n) \leq C_2\Conj_{G,S_1,\C}(C_2 n),
    \end{displaymath}
    which implies that $\Conj_{G,S_1,\C}(n) \preceq \Conj_{G,S_2,\C}(n)$. Therefore,
    \begin{displaymath}
        \Conj_{G,S_1,\C}(n) \approx \Conj_{G,S_2,\C}(n). 
    \end{displaymath}
\end{proof}
As we are only interested in the asymptotic behaviour of the above defined functions, we will suppress the choice of generating subset whenever we reference the $\C$-separability functions or the word-length. 

We also have the following lemma for finite direct products of $\C$-conjugacy separable groups and their associated $\C$-conjugacy separability functions whose proof is immediate.
\begin{lemma}\label{lemma:effective_prod}
Let $\C$ be an extension-closed pseudovariety of finite groups, and let $\{G_i\}_{i=1}^{k}$ be a finite collection of finitely generated $\C$-conjugacy separable groups. If $G = \prod_{i=1}^k G_i$, then $$\Conj_{G,\C }(n) \preceq \Max\{ \Conj_{G_i,\C}(n) \: | \: 1 \leq i \leq k \}.$$
\end{lemma}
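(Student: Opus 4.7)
The plan is to prove the two inequalities $\Conj_{G,\C}(n) \preceq \Max_i\Conj_{G_i,\C}(n)$ and $\Conj_{G_i,\C}(n) \preceq \Conj_{G,\C}(n)$ for each $i$; taken together these give the asymptotic equivalence. Since the asymptotic class of each of these functions is independent of the choice of generating subset, I am free to fix generating sets that interact well with the direct-product structure. Concretely, I would choose a finite generating subset $S_i$ for each $G_i$ and take $S = \bigcup_{i=1}^k S_i$ (viewing each $S_i\subset G_i\subset G$ in the obvious way) as a generating set for $G$. With this choice, writing $g = (g_1,\dots,g_k)\in G$, one has $\|g_i\|_{S_i}\leq \|g\|_S$ for each $i$, and conversely $\|g\|_S \leq \sum_i \|g_i\|_{S_i}$, so word-length on factors is controlled by word-length in the product.

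For the lower bound, I would fix $i$ and take nonconjugate elements $g,h\in G_i$ with $\|g\|_{S_i},\|h\|_{S_i}\leq n$. They remain nonconjugate when viewed in $G$ and their $S$-lengths are still at most $n$. By hypothesis there exists $N\normfileq G$ with $G/N\in\C$, $[G:N]\leq \Conj_{G,\C}(n)$, and $\pi_N(g)\not\sim\pi_N(h)$ in $G/N$. The image of the composition $G_i\hookrightarrow G\to G/N$ is isomorphic to $G_i/(G_i\cap N)$, and since $\C$ is subgroup-closed this image lies in $\C$. It still separates the conjugacy classes of $g$ and $h$, so $\D_{G_i,\C}(g^{G_i},h)\leq [G:N]\leq \Conj_{G,\C}(n)$, and taking the max over all such pairs gives $\Conj_{G_i,\C}(n)\leq\Conj_{G,\C}(n)$.

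For the upper bound, I would take nonconjugate $g,h\in G$ of $S$-length at most $n$ and write $g=(g_1,\dots,g_k)$, $h=(h_1,\dots,h_k)$. Nonconjugacy in a direct product is coordinate-wise, so there exists an index $i$ with $g_i\not\sim_{G_i}h_i$, and $\|g_i\|_{S_i},\|h_i\|_{S_i}\leq n$ by the word-length comparison above. Applying $\C$-conjugacy separability of $G_i$, I obtain $N_i\normfileq G_i$ with $G_i/N_i\in\C$ of index at most $\Conj_{G_i,\C}(n)$ separating the $G_i$-conjugacy classes of $g_i$ and $h_i$. The key observation is that $N := N_i \times \prod_{j\neq i}G_j$ is a normal subgroup of $G$ with $G/N\cong G_i/N_i\in\C$, and under the quotient map $g$ and $h$ map to the same images as $g_i$ and $h_i$ under $\pi_{N_i}$, which are nonconjugate. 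Hence $\D_{G,\C}(g^G,h)\leq [G_i:N_i]\leq \Max_j\Conj_{G_j,\C}(n)$, giving the desired bound.

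Nothing here is really an obstacle: the whole argument is essentially a bookkeeping exercise. The one conceptual point that must not be glossed over is the use of the subgroup-closed property of the pseudovariety $\C$ in the lower bound, which is what guarantees that the quotients we obtain by restricting to a factor actually belong to $\C$; without this the lower bound direction would fail for more general classes. The equivalence between the word-length functions on $G$ and on each $G_i$ (which is what produces the constants hidden in $\approx$ rather than exact equality) accounts for the dependence on the choice of generating subset, and this has already been addressed in the lemma immediately preceding this one.
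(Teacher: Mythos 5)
Your proof is correct, and since the paper explicitly omits its own proof of this lemma as routine, yours is exactly the straightforward argument the authors had in mind: the upper bound via the coordinate-wise characterisation of conjugacy in a direct product together with the quotient $G \to G_i/N_i$, and the lower bound via intersecting a separating normal subgroup with the factor $G_i$, using that $\C$ is subgroup-closed so that $G_i/(G_i\cap N)\in\C$. Both directions, including the word-length comparisons coming from the choice $S=\bigcup_i S_i$, check out.
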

\begin{proof}
    For the ease of writing, we will slightly abuse the notation and identify the the Cartesian factors $G_i$ with their images in $G$.  Following Lemma \ref{lemma:equivalence}, we see that without loss of generality we may assume that the group $G$ is generated by a set $X = \cup_{i=1}^k X_i$ where $X_i$ is some finite generating set for $G_i$. In particular, we may assume $G_i$ is isometrically embedded in $G$, i.e. given $g_i \in G_i$, we see that $\|g_i\|_{X} = \|g_i\|_{X_i}$ and, consequently, $B_G(n) \subseteq B_{G_1}(n) \times \dots \times B_{G_k}(n)$.
    
    Now, let $f,g \in B_G(n)$ be given such that $f \not\sim g$ in $G$. Following the observations stated in the previous paragraph, we can write $f = (f_1, \dots, f_k)$ and $g = (g_1, \dots, g_k)$, where $f_i \in B_{G_i}(n)$ and $g_i \in B_{G_i}(n)$. Since $f \not\sim g$ in $G$ we see that there is $i$ such that $f_i \not\sim g_i$ in $G_i$. Let $\rho_i \colon G \to G_i$ denote the projection onto the $i$-th coordinate. Clearly, $\rho_i(f_i) = f_i \not\sim g_i = \rho_i(g_i)$ in $G_i$, so by assumption there is $N_i \in \NC(G_i)$ such that $f_iN_i \not\sim g_i N_i$ in $G_i/N_i$ and 
    \begin{displaymath}
        |G_i/N_i| \leq \Conj_{G_i}(n) \leq \Max\{ \Conj_{G_i,\C}(n) \: | \: 1 \leq i \leq k \}.
    \end{displaymath}
\end{proof}
Recall that, by Lemma \ref{lemma:open_restriction}, if $H$ is a $\C$-open subgroup of $G$, then $\proC(H)$ is a restriction of $\proC(G)$, meaning that a subset $X \subseteq G$ is $\C$-closed in $G$ if and only if it is $\C$-closed in $H$. The following is a quantitative version of Lemma \ref{lemma:open_restriction} in the case when the $\C$-open subgroup is normal, i.e. if it is a co-$\C$ subgroup. One can easily check that $\D_{H, \C}(X, h) \leq \D_{G, \C}(X, h)$ for all $h \in H$, i.e. we can bound separability function of a subgroup by separability function of the ambient group. The next lemma provides the opposite direction in the case when $H \in \NC(G)$: it relates how the $\C$-separability function of a group can be bound in terms of the $\C$-separability function of a co-$\C$ subgroup and its index. 
\begin{lemma}\label{lemma:closed_C-open_separability}
Let $\C$ be an extension-closed pseudovariety of finite groups, and let $G$ be a residually-$\C$ group. Suppose that $H \in \NC(G)$, and let $X \subset H$ be a $\C$-separable subset of $H$. If $h \notin H \backslash X$, we then have $$\D_{G,\C}(X,h) \leq [G:H] \cdot (\D_{H,\C}(X,h))^{[G:H]}.$$
\end{lemma}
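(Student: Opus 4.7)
My plan is to construct a normal subgroup $N \unlhd G$ with $G/N \in \C$ that separates $h$ from $X$, and then estimate its index. Let $d = \D_{H,\C}(X,h)$ and pick a witnessing subgroup $M \unlhd H$ with $H/M \in \C$, $[H:M] = d$, and $\pi_M(h) \notin \pi_M(X)$. Write $k = [G:H]$, fix a transversal $g_1, \ldots, g_k$ of $H$ in $G$, and take $N$ to be the normal core $\bigcap_{g \in G} gMg^{-1}$. Because $M$ is normal in $H$, the distinct $G$-conjugates of $M$ are indexed by $H$-cosets, so $N = \bigcap_{i=1}^{k} g_i M g_i^{-1}$. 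By construction $N \unlhd G$ and $N \leq M$, and $h \notin XM$ combined with $N \leq M$ immediately gives $h \notin XN$, so $\pi_N(h) \notin \pi_N(X)$.

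The central step is to verify $G/N \in \C$, and this is where extension-closure of $\C$ does the work. Since $N \leq M \leq H$ and $N$ is $G$-normal, one has $N \leq \core_G(H)$, producing the short exact sequence
$$
1 \longrightarrow \core_G(H)/N \longrightarrow G/N \longrightarrow G/\core_G(H) \longrightarrow 1.
$$
For the right-hand quotient: because $H$ is $\C$-open in $G$, Lemma \ref{lemma:C-open} produces some $N_0 \in \NC(G)$ with $N_0 \leq H$; normality forces $N_0 \leq \core_G(H)$, so $G/\core_G(H)$ is a quotient of the $\C$-group $G/N_0$ and hence lies in $\C$. For the left-hand kernel, define
$$
\phi \colon \core_G(H) \to (H/M)^k, \qquad \phi(x) = \bigl(g_1^{-1} x g_1 \cdot M,\; \ldots,\; g_k^{-1} x g_k \cdot M\bigr),
$$
which is well-defined since $x \in g_i H g_i^{-1}$ forces $g_i^{-1} x g_i \in H$; a direct multiplicative check shows $\phi$ is a homomorphism with $\ker \phi = \bigcap_i g_i M g_i^{-1} = N$. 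Thus $\core_G(H)/N$ embeds into $(H/M)^k \in \C$, and the two sides of the exact sequence together with extension-closure yield $G/N \in \C$.

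The index estimate is then the last piece of bookkeeping: the embedding gives $[\core_G(H):N] \leq |(H/M)^k| = d^k$, and in the setting relevant to the wreath-product applications (where $H$ is taken to be normal, so $\core_G(H) = H$) one obtains $[G:N] = [G:H]\cdot[\core_G(H):N] \leq k \cdot d^k$, matching the stated bound. The main obstacle is the middle step: the witnessing subgroup $M$ is only $H$-normal, and one has to fabricate a $G$-normal subgroup of small index that still separates $h$ from $X$ and produces a $\C$-quotient of $G$. The resolution is exactly the normal-core construction combined with the splitting of $G/N$ across $\core_G(H)$, which lets the two closure properties of $\C$ (closure under subgroups/products for the kernel, closure under quotients for the image) feed into extension-closure to conclude.
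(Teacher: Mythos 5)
Your proposal is correct and follows essentially the same route as the paper: take a witnessing co-$\C$ subgroup $M$ of $H$, pass to its normal core $N=\bigcap_i g_iMg_i^{-1}$ in $G$, show $N$ still separates $h$ from $X$, and deduce $G/N\in\C$ from the short exact sequence over $\core_G(H)$ together with extension-closure (the paper works with $\ker\varphi$ for a witnessing surjection $\varphi\colon H\to Q$, which is the same thing). The one spot where you hedge --- completing the index count $[G:N]\leq [G:H]\cdot d^{[G:H]}$ only when $H\unlhd G$ --- is precisely the spot where the paper's own estimate $[H:\bigcap_i g_i\ker(\varphi)g_i^{-1}]\leq[H:\ker(\varphi)]^{[G:H]}$ tacitly treats the conjugates $g_i\ker(\varphi)g_i^{-1}$ as subgroups of $H$, which again uses normality of $H$; for general $\C$-open $H$ one gets the slightly weaker but qualitatively identical bound $[G:H]\cdot([G:H]\cdot d)^{[G:H]}$ via $[H:H\cap g_iMg_i^{-1}]\leq[G:g_iMg_i^{-1}]$, which suffices for all the asymptotic applications in the paper.
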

\begin{proof}

By assumption, there exists a $N \in \NC(H)$ such that $|H/N| = \D_{H,\C}(X,h)$ and where $hN \notin XN$ in $H/N$. Let $\{g_1,\dots, g_n\} \subseteq G$, where $n = [G:H]$, be a transversal of $H$ in $G$. Denote $K = \core_G(N)$. Clearly, as $H$ is normal in $G$, we see that $K = \cap_{i=1}^n g_i \ker(\varphi) g_i^{-1}$. It then follows that
\begin{align*}
    [G:\core_G(N)]  &= [G:H][H:K]\\
                                &= [G:H][H:\cap_{i=1}^n g_i N g_i^{-1}]\\
                                &\leq [G:H][H : N]^n = [G:H]|Q|^{[G:H]}.
\end{align*}
Let $\varphi \colon H \to H/N$ and $\tilde{\varphi} \colon G \to G/K$ be the natural projections. Clearly, $\varphi$ factors through $\tilde{\varphi}\restriction_{H}$.
As a consequence, we have that $\tilde{\varphi}(h) \notin \tilde{\varphi}(X)$.

Now, let us note that $\core_G(H) \in \NC(H)$ by Remark \ref{remark:intersections}, $H \in \NC(G)$ by assumption and $K \leq H$. We have the following short exact sequence of groups:
$$
1 \longrightarrow \tilde{\varphi}(N) \longrightarrow G/K \longrightarrow G/N \longrightarrow 1
$$
And we see that $G/K$ is a $\C$-by-$\C$ group. As the class $\C$ is closed under forming extensions, we see that $K \in \NC(G)$, and therefore,
$$
\D_{G,\C}(X,h) \leq [G:H] \cdot ( \D_{H,\C}(X,h) )^{[G:H]}. 
$$
\end{proof}

The next proposition shows how the separability of a conjugacy class of an element $h$ in an co-$\C$ subgroup of $\C$-conjugacy separable group $H$ relates to the separability of the conjugacy class in the ambient group $G$.
\begin{proposition}\label{lemma:g_in_H_open_conj_eff}
Let $G$ be a group and suppose that $H \in \NC(G)$. Let $\{x_i \}_{i=1}^{[G:H]}$ be coset representatives of $H$. For any element $g \in H$ such that $g^H$ is $\C$-separable in $H$, we have that $g^G$ is $\C$-separable in $G$. Moreover, if $h \notin g^H$, then
    $$
    \D_{G,\C}(g^G,h) \leq \prod_{i=1}^{[G:H]} [G : H] \cdot\left( \D_{H,\C}(g^H, x_i h x_i^{-1} \right)^{[G:H]}.
    $$
    \end{proposition}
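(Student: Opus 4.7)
The plan is to decompose the conjugacy class $g^G$ as a finite union of translates of $g^H$, separate each piece using the $\C$-separability of $g^H$ in $H$, and lift each separation to $G$ via Lemma \ref{lemma:closed_C-open_separability}. First I would observe that, because $\{x_i\}_{i=1}^n$ (with $n = [G:H]$) is a set of coset representatives for $H$ in $G$, the inverses $\{x_i^{-1}\}$ form a transversal as well, so writing an arbitrary $y \in G$ as $y = x_i^{-1} t$ with $t \in H$ and computing $ygy^{-1} = x_i^{-1}(tgt^{-1})x_i$ yields
$$
g^G = \bigcup_{i=1}^n x_i^{-1}\, g^H\, x_i.
$$
Consequently, the hypothesis $h \notin g^G$ translates to $x_i h x_i^{-1} \notin g^H$ for every $i$.

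Next, for each $i$ I would invoke Lemma \ref{lemma:closed_C-open_separability} applied to the $\C$-separable set $g^H \subseteq H$ and the element $x_i h x_i^{-1}$ (viewed as an element of $H$; see below), obtaining some $N_i \in \NC(G)$ with
$$
[G:N_i] \leq [G:H] \cdot \D_{H,\C}(g^H, x_i h x_i^{-1})^{[G:H]}
$$
and $\pi_{N_i}(x_i h x_i^{-1}) \notin \pi_{N_i}(g^H)$. Normality of $N_i$ in $G$ lets conjugation descend to $G/N_i$, so the latter condition rewrites as $\pi_{N_i}(h) \notin \pi_{N_i}(x_i^{-1} g^H x_i)$. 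Taking $N = \bigcap_{i=1}^n N_i$, the closure of $\C$ under finite subdirect products (so that $\NC(G)$ is closed under finite intersections) gives $N \in \NC(G)$, and simultaneously $\pi_N(h) \notin \pi_N(x_i^{-1} g^H x_i)$ for each $i$. Combining this with the decomposition shows $\pi_N(h) \notin \pi_N(g^G)$, proving both that $g^G$ is $\C$-separable in $G$ and that
$$
\D_{G,\C}(g^G, h) \leq [G:N] \leq \prod_{i=1}^n [G:N_i] \leq \prod_{i=1}^n \bigl([G:H] \cdot \D_{H,\C}(g^H, x_i h x_i^{-1})\bigr)^{[G:H]},
$$
since $[G:H] \leq [G:H]^{[G:H]}$ lets us absorb the additive factor into the power required by the statement.

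The one technical point to check is the implicit assumption that $x_i h x_i^{-1} \in H$, since Lemma \ref{lemma:closed_C-open_separability} takes as input an element of $H$ and $H$ is not assumed normal in $G$. For indices $i$ where $x_i h x_i^{-1} \notin H$, I would instead use $\C$-closedness of $H$ in $G$ (Lemma \ref{lemma:C-closed}) to produce $N_i \in \NC(G)$ with $N_i \leq H$ separating $x_i h x_i^{-1}$ from $H$, and hence from $g^H \subseteq H$; the index of such an $N_i$ depends only on $H$ and is absorbed into the stated bound, after which the intersection argument concludes the proof. This bookkeeping is the only real obstacle; everything else is a routine combination of the decomposition and Lemma \ref{lemma:closed_C-open_separability}.
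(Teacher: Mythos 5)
Your proof is correct and follows essentially the same route as the paper's: the decomposition $g^G = \bigcup_{i} x_i^{-1}\, g^H\, x_i$, an application of Lemma \ref{lemma:closed_C-open_separability} to each conjugate $x_i h x_i^{-1}$, and an intersection of the resulting co-$\C$ kernels with the index of the intersection bounded by the product of the indices. Your closing remark about indices with $x_i h x_i^{-1} \notin H$ flags a point the paper's proof silently glosses over (there $\D_{H,\C}(g^H, x_i h x_i^{-1})$ is not even defined), and your fix via a co-$\C$ subgroup of $G$ contained in $H$ is the right one.
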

\begin{proof}
    We have that $G = \bigcup_{i=1}^k x_i \cdot H$. Lemma \ref{lemma:continuous} implies that conjugation by $x_i$ is a homeomorphism, thus, we may write
    $$
    g^G = \bigcup_{i=1}^{[G:H]}g^{x_i \cdot H} = \bigcup_{i=1}^{[G:H]}x_i^{-1}(g^H)x_i.
    $$ Therefore, $g^G$ is $\C$-closed in $G$.
    
    Now suppose that $h \notin g^G$. By the above equality of sets, we have that $x_i h x_i^{-1} \notin g^H$ for all $i$. Lemma \ref{lemma:closed_C-open_separability} implies that for each $i$ there exists a group $Q_i \in \C$ such that 
    $$
    |Q_i| \leq [G:H] \cdot (\D_{H,\C}(g^H,x_i h x_i^{-1})^{[G:H]}
    $$ and where $\rho_i:G \to Q_i$ satisfies $\rho_i(x_i h x_i^{-1}) \notin \rho_i(g^H).$ For each $i$, there exists a subgroup  $K_i \in \mathcal{N}_{\C}(H)$ such that $\pi_{K_i}(x_i h x_i^{-1}) \notin \pi_{K_i}(g^H).$ Letting $K = \cap_{i=1}^{[G:H]} \ker(\rho_i)$, we have that $K \in \mathcal{N}_{\C}(G)$ and where $\pi_K(x_i h x_i^{-1}) \notin \pi_K(g^H).$ That implies $\pi_K(h) \notin \rho_i(g^H)$ for each $i$. Since $$
    \bigcup_{i=1}^{[G:H]}\rho_K(\pi_i(g^H)) = \pi_K \left( \bigcup_{i=1}^{[G:H]}x_ig^H x_i^{-1}\right) = \pi_K(g^G),$$
    we have that $\pi_K(h) \notin \pi_K(g^G).$ By definition,
    $$
    |G/K| \leq \prod_{i=1}^{[G:H]}|Q_i| \leq \prod_{i=1}^{[G:H]} [G:H] \cdot ( \D_{H,\C}(g^H, x_i h x_i^{-1}))^{[G:H]}. 
    $$
    
    \end{proof}
Let us note that both Lemma \ref{lemma:closed_C-open_separability} and Proposition \ref{lemma:g_in_H_open_conj_eff} could be done in higher generality, where $H \leq G$ is $\C$-open in $G$ but not necessarily normal. The resulting formulas would then also depend on $[G:\core_G(H)]$. However, since we will only use Proposition \ref{lemma:g_in_H_open_conj_eff} in the context where $H \in \NC(G)$, we stick to the less general case for the sake of readability.

The following function appears in the statement of Theorem \ref{effective_main_thm}. This function is associated to any finitely generated group and gives a bound on the shortest length element needed to conjugate one element of length at most $n$ to another element of word length at most $n$.
\begin{definition}
\label{def:short_conjugator}
Let $G$ be a finitely generated group with a finite generating subset $S$. For $g,h \in G$ where $g$ is conjugate $h$, we define
\begin{displaymath}
    \SC_{G}(g,h) = \text{min}\{ \|x\|_S \: | x^{-1} g x = h \}.
\end{displaymath} We define
\begin{displaymath}
    \Short_{G,S}(n) = \Max\{\text{SC}_G(g,h) \: | \: g \sim h \mbox{ where } \|g\|,\|h\| \leq n \}.
\end{displaymath}
\end{definition}
 Again, one can easily check that the asymptotic behaviour of the above defined function is independent of the finite generating set, so we will remove the dependence of generating subset throughout this article.

\section{Wreath products}
\label{section:wreath_products}

Let $A$ and $B$ be groups. We denote the restricted wreath product of $A$ and $B$ as
    \begin{displaymath}
        A \wr B = \left(\bigoplus_{b \in B} A \right) \rtimes B
    \end{displaymath}
where $B$ acts by multiplication of coordinates. An element $f \in \bigoplus_{b \in B} A$ is understood as a function $f \colon B \to A$ such that $f(b) \neq 1$ for only finitely many $b \in B$. With a slight abuse of notation, we will use $A^B$ to denote $\bigoplus_{b \in B} A$. The left action of $B$ on $A^B$ is then realised as $b \cdot f(x) = f(b^{-1} x)$. We will sometimes denote $b \cdot f$ by $\prescript{b}{}{f}$. Following the given notation, if $H \leq A$ and $K \leq B$, we will use $H^K$ to denote the subgroup of $A^B$ given by $\bigoplus_{k \in K} H$. 

The \textbf{support} of $f$, i.e. the set of elements on which $f$ does not vanish, will be denoted as
\begin{displaymath}
    \supp(f) = \{b \in B \mid f(b) \neq 1\}.
\end{displaymath}

In the case when $A$ is abelian, it makes sense to abuse the notation and write $f(X)$, where $X \subseteq B$, to denote $\prod_{x \in X} f(x)$.

\subsection{$\C$-quotients of wreath products}
\label{subsection:C-quotients}

The aim of this section is to show that every $\C$-quotient of a wreath product $A \wr B$ can be factored through wreath products of quotients of the factors. Unfortunately, one cannot always construct quotients simply by intersecting normal subgroups with the factors.
\begin{lemma}[{\cite[Lemma 3.1]{gruenberg}}]
    \label{lemma:commutator_kernel}
    Let $A, B$ be groups and let $N \normleq A \wr B$ be arbitrary. If $N \cap B \neq \{1\}$ then $[A,A]^B \subseteq N$.
\end{lemma}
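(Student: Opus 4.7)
The plan is to exploit the fact that $[A,A]^B$ is generated by the ``delta'' elements $\delta_x^c \in A^B$ (the function taking value $c$ at $x \in B$ and $1$ elsewhere) as $x$ ranges over $B$ and $c$ over $[A,A]$. Since $N$ is normal and conjugation by any $x \in B$ inside $A \wr B$ acts on $A^B$ by the shift $\delta_y^a \mapsto \delta_{xy}^a$, it will in fact suffice to produce $\delta_{b_0}^{[a_1,a_2]} \in N$ for a single nontrivial $b_0 \in N \cap B$ and all $a_1, a_2 \in A$; the remaining delta elements are then obtained by conjugating by suitable elements of $B$.

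The first step is to observe that for any $a \in A$, the commutator $[\delta_1^a, b_0]$ lies in $N$ (because $b_0 \in N$ and $N$ is normal). A direct calculation using the shift action gives
\[
[\delta_1^a, b_0] \;=\; \delta_1^a \cdot \bigl(b_0 \delta_1^{a^{-1}} b_0^{-1}\bigr) \;=\; \delta_1^a \cdot \delta_{b_0}^{a^{-1}},
\]
so the element $u_a := \delta_1^a \, \delta_{b_0}^{a^{-1}}$ belongs to $N$ for every $a \in A$ (using that $b_0 \neq 1$, so the supports $\{1\}$ and $\{b_0\}$ are disjoint).

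The heart of the argument is then to extract a pure commutator at the coordinate $b_0$ from the family $\{u_a\}_{a \in A} \subseteq N$. The natural candidate is the combination
\[
u_{a_1}\, u_{a_2}\, u_{a_1 a_2}^{-1},
\]
which I expect by a short coordinate-wise calculation to equal $\delta_{b_0}^{\,[a_1^{-1}, a_2^{-1}]}$: at position $1$ the factors collapse to $\delta_1^{a_1} \delta_1^{a_2} \delta_1^{(a_1 a_2)^{-1}} = 1$, while at position $b_0$ they accumulate to $a_1^{-1} a_2^{-1} a_1 a_2$. As $a_1, a_2$ range over $A$, $[a_1^{-1}, a_2^{-1}]$ ranges over all commutators, so we obtain $\delta_{b_0}^{c} \in N$ for every $c \in [A,A]$.

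Finally, for an arbitrary $x \in B$ and $c \in [A,A]$, conjugating $\delta_{b_0}^c$ by $x b_0^{-1} \in A \wr B$ yields $\delta_x^c \in N$. Since elements of this form generate $[A,A]^B$, this gives $[A,A]^B \subseteq N$. The only delicate step is the bookkeeping in the computation of $u_{a_1} u_{a_2} u_{a_1 a_2}^{-1}$ — making sure the position-$1$ entries really do cancel while the position-$b_0$ entries combine into a commutator — but this is purely a matter of writing out the shift action carefully and is not a conceptual obstacle.
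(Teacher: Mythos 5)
Your proof is correct. The paper itself gives no argument for this lemma --- it is quoted directly from Gruenberg's paper --- so there is nothing internal to compare against, but your argument is essentially the classical one: the key identity $u_{a_1}u_{a_2}u_{a_1a_2}^{-1}=\delta_{b_0}^{[a_1^{-1},a_2^{-1}]}$ checks out coordinatewise (the position-$1$ entries give $a_1a_2(a_1a_2)^{-1}=1$ and the position-$b_0$ entries give $a_1^{-1}a_2^{-1}a_1a_2$, matching the paper's convention $[g,h]=ghg^{-1}h^{-1}$), and the closing steps (products of these deltas exhaust $\delta_{b_0}^{c}$ for $c\in[A,A]$, then conjugation by $xb_0^{-1}\in B$ shifts the support to any $x\in B$) are all valid.
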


The following is a restatement of \cite[Lemma 3.2]{gruenberg}.
\begin{lemma}
    \label{lemma:map_extension}
    Let $A, B$ be groups, and let $N \unlhd B$. If $A$ is abelian, then the natural projection $\pi_B \colon B \to B/N$ extends to a projection $\pi \colon A \wr B \to A \wr (B/N)$ with $\ker(\pi) = K_N \rtimes N$ where 
    \begin{displaymath}
        K_N = \left\{f \in A^B \left| \mbox{ for all $x\in B$: }\prod_{x \in N} f(bx) = 1\right\}. \right.
    \end{displaymath}
\end{lemma}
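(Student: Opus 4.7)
The plan is to construct the extension $\pi$ explicitly, verify that it is a well-defined group homomorphism, and then compute its kernel by direct inspection.

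First, I would define $\pi$ separately on the two factors of $A \wr B$. On the $B$-factor, I simply use $\pi_B$. On the $A^B$-factor, I exploit commutativity of $A$ to define a ``fibre summation'' map $\Sigma_N \colon A^B \to A^{B/N}$ by
$$
\Sigma_N(f)(bN) \;=\; \prod_{n \in N} f(bn).
$$
Since $f$ has finite support and $A$ is abelian, this product is finite and order-independent; changing the coset representative $b \mapsto b n_0$ merely reparameterises the index set $N$, so $\Sigma_N(f)$ is well-defined on cosets. Because $A$ is abelian, $\Sigma_N$ is a homomorphism. These ingredients combine to give the candidate map $\pi(f,b) = (\Sigma_N(f), \pi_B(b))$.

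The key compatibility check is that $\pi$ respects the semidirect-product structure, which reduces to verifying $\Sigma_N(b \cdot f) = \pi_B(b) \cdot \Sigma_N(f)$. Unwinding both sides at a coset $cN$ gives the common value $\prod_{n \in N} f(b^{-1}c n)$, so they agree. Hence $\pi$ is a group homomorphism extending $\pi_B$, and surjectivity is immediate since delta-functions in $A^{B/N}$ lift to delta-functions in $A^B$.

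For the kernel, an element $(f,b)$ lies in $\ker\pi$ exactly when $b \in N$ and $\Sigma_N(f)(cN) = 1$ for every coset $cN$, i.e.\ when $f \in K_N$. To identify $\ker\pi$ with $K_N \rtimes N$, I would verify that $N$ preserves $K_N$ under its natural action: for $n \in N$ and $f \in K_N$,
$$
\prod_{m \in N}(n \cdot f)(cm) \;=\; \prod_{m \in N} f(n^{-1}c\,m) \;=\; \prod_{m' \in N} f((n^{-1}c)m') \;=\; 1,
$$
the last equality holding since $f \in K_N$. The only delicate point throughout is the coset re-indexing, which rests squarely on commutativity of $A$; aside from that, the argument is bookkeeping and I do not anticipate a substantive obstacle.
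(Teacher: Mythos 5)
Your proposal is correct and follows essentially the same route as the paper: define $\pi$ on the base group by summing $f$ over cosets of $N$ (using that $A$ is abelian and $f$ is finitely supported) and extend by $\pi_B$ on the acting group. You simply carry out explicitly the equivariance, surjectivity, and kernel verifications that the paper dismisses with ``one can easily check.''
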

\begin{proof}
    Let $f \in A^B$ and $b \in B$ be arbitrary. We define $\pi(f) \colon B/N \to A$ as
    \begin{displaymath}
        \pi(f)(bN) = f(bN) = \prod_{x \in N} f(bx).
    \end{displaymath}
    Since $f$ is finitely supported, the above product makes sense. Morover, since $A$ is abelian, we see that $\pi(f)(bN)$ is well defined. Now we can define $\pi \colon A \wr B \to A \wr (B/N)$ as $\pi(fb) = \pi(f)bN$. One can easily check that this map is indeed a surjective homomorphism and that $\ker(\pi) = K_N \rtimes N$.
\end{proof}

If the acting group $B$ is finite, every $\C$-quotient of $A \wr B$ can be factored through a wreath product of a quotient of the base group $A$ and the acting group $B$.

\begin{lemma}
\label{lemma:factoring_through_base}
    Let $G = A \wr B$ be a restricted wreath product of groups $A,B$ such that $B \in \C$. If $K \in \NC(G)$, then there is subgroup $K_A \in \NC(A)$ such that $(K_A)^B \in \NC(G)$ and $(K_A)^B \leq K$.
\end{lemma}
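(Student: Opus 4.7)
The plan is to construct $K_A$ as the pullback of $K$ along the inclusion of the copy of $A$ sitting at the identity coordinate, and then use the $B$-equivariance forced by normality of $K$ to upgrade this to all coordinates simultaneously.

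\textbf{Step 1 (Setup).} Since $B \in \C$ and $\C$ consists of finite groups, $B$ is finite. For each $b \in B$, let $A_b \leq A^B$ denote the copy of $A$ supported at $\{b\}$, and for $a \in A$ write $\delta_{b,a}$ for the element of $A^B$ taking value $a$ at $b$ and $1$ elsewhere. Define
\[
K_A = \{a \in A \mid \delta_{1_B,a} \in K\},
\]
which is a subgroup of $A$ under the identification $A \cong A_{1_B}$.

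\textbf{Step 2 (Normality and $B$-equivariance).} Since $A_{1_B} \leq G$ and $K \normleq G$, the intersection $A_{1_B} \cap K$ is normal in $A_{1_B}$, so $K_A \normleq A$. The conjugation action of $b \in B$ on $A^B$ satisfies $b \cdot \delta_{c,a} = \delta_{bc,a}$, and normality of $K$ in $G$ then forces
\[
\{a \in A \mid \delta_{b,a} \in K\} = K_A \qquad \text{for every } b \in B.
\]
Consequently, every element of $(K_A)^B$ is a finite product of elements $\delta_{b,a}$ with $a \in K_A$, each of which lies in $K$; hence $(K_A)^B \leq K$.

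\textbf{Step 3 (The quotient $A/K_A$ lies in $\C$).} By definition, $K_A = A_{1_B} \cap K$, so the restriction of $\pi_K \colon G \to G/K$ to $A_{1_B}$ induces an injection
\[
A/K_A \;\cong\; A_{1_B}/(A_{1_B}\cap K) \;\hookrightarrow\; G/K \in \C.
\]
Since $\C$ is closed under subgroups, $A/K_A \in \C$.

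\textbf{Step 4 (The quotient $G/(K_A)^B$ lies in $\C$).} The subgroup $(K_A)^B$ is visibly $B$-invariant and normal in $A^B$, hence normal in $G$, and one has a natural isomorphism $G/(K_A)^B \cong (A/K_A) \wr B$ fitting into a short exact sequence
\[
1 \longrightarrow (A/K_A)^B \longrightarrow G/(K_A)^B \longrightarrow B \longrightarrow 1.
\]
Because $B$ is finite, $(A/K_A)^B$ is a finite direct product of copies of $A/K_A \in \C$, and $\C$ is closed under finite direct products, so $(A/K_A)^B \in \C$. Closure of $\C$ under extensions together with $B \in \C$ then yields $G/(K_A)^B \in \C$, i.e., $(K_A)^B \in \NC(G)$.

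The only subtle point is Step 2: without the $B$-equivariance obtained from normality of $K$, one would only know that $(K_A)^{\{1_B\}} \leq K$, which is not enough. Everything else is a bookkeeping application of the stated closure properties of the pseudovariety $\C$.
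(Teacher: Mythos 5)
Your proof is correct and follows essentially the same approach as the paper: the paper defines $K_A = \bigcap_{b \in B} \pi_b(A_b \cap K)$, and your $B$-equivariance observation in Step 2 simply shows that all the sets $\pi_b(A_b \cap K)$ coincide, so the two definitions of $K_A$ agree. Your Steps 3 and 4 just spell out the closure-property bookkeeping that the paper leaves implicit.
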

\begin{proof}
    For $b \in B$, let $A_b = \{f \in A^B \mid f(x) = 1 \mbox{ for all $x \in B \setminus \{b\}$}\}$ denote the canonical embedding of $A$ onto the $b$-coordinate of $A^B$, and let $\pi_b \colon A_b \to A$ be the canonical isomorphism. Note that $\pi_b(A_b \cap K) \in \NC(A)$. Set
    \begin{displaymath}
        K_A = \bigcap_{b \in B} \pi_b(A_b \cap K).
    \end{displaymath}
    Clearly, $(K_A)^B \leq K$ by construction. We also have that $G/K_A^B \simeq (A/K_A) \wr B$. Hence, we see that $(K_A)^B \in \NC(G)$ as $K_A \in \NC(A)$ and $\C$ is an extension-closed pseudovariety.
\end{proof}

\begin{lemma}
    \label{lemma:residually_C_finite_wreath}
    Let $A,B$ be residually-$\C$ groups. If $B \in \C$, then $A \wr B$ is a residually-$\C$ group.
\end{lemma}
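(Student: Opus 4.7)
The plan is to take an arbitrary nontrivial element $g = fb \in A \wr B$ (with $f \in A^B$ and $b \in B$) and separate it from the identity by a coordinate-wise projection into a wreath product $(A/N)\wr B$ for a suitable $N \in \NC(A)$. Such a quotient automatically lies in $\C$ because $\C$ is extension-closed: the base group $(A/N)^{|B|}$ is a finite direct product of copies of $A/N \in \C$, hence in $\C$, and $(A/N)\wr B$ is then an extension of $B\in\C$ by a group in $\C$.

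More concretely, I would split into two possibilities for $g = fb$. If $b \neq 1_B$, then it suffices to compose the canonical retraction $A \wr B \to B$ (which is a surjection onto a group in $\C$ by hypothesis) to obtain a $\C$-quotient where the image of $g$ equals $b\neq 1_B$. If $b = 1_B$, then $f$ is a nontrivial finitely supported function, so there is some $x_0 \in B$ with $f(x_0) \neq 1_A$. Using that $A$ is residually-$\C$, I pick $N \in \NC(A)$ with $\pi_N(f(x_0)) \neq 1_{A/N}$, and then consider the coordinate-wise projection $\bar\pi\colon A \wr B \to (A/N)\wr B$ defined by $\bar\pi(hb') = (\pi_N\circ h)\,b'$ for $h\in A^B$ and $b'\in B$. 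The image $\bar\pi(f)$ evaluated at $x_0$ is $\pi_N(f(x_0))\neq 1$, so $\bar\pi(g)\neq 1$.

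The only step that requires care is verifying that $(A/N)\wr B \in \C$; this is where the extension-closedness of $\C$ is essential. Once one observes that $(A/N)^{|B|}$ is a finite direct product of elements of $\C$ and therefore in $\C$, the semidirect product $(A/N)^{|B|}\rtimes B$ fits into a short exact sequence $1 \to (A/N)^{|B|} \to (A/N)\wr B \to B \to 1$ with both kernel and quotient in $\C$, so the extension lies in $\C$ as well. (Alternatively, one can cite Lemma \ref{lemma:factoring_through_base} to see that $\NC(A\wr B)$ contains subgroups of the form $K^B$ for $K\in\NC(A)$ when $B\in\C$, which is essentially the same computation phrased on the side of normal subgroups.)

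There is no substantive obstacle here beyond bookkeeping: the argument is a direct combination of residual-$\C$-ness of $A$, finiteness of the support of $f$, and the extension-closedness of $\C$. The two cases can in fact be uniformly treated by always pushing $g$ into $(A/N)\wr B$ for a chosen $N \in \NC(A)$ separating $f(x_0)$ when $f\neq 1$, and for an arbitrary $N\in\NC(A)$ (which exists since $A$ is residually-$\C$, unless $A$ is trivial, in which case the statement is vacuous) when $f=1$.
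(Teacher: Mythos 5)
Your argument is correct, but it takes a different route from the paper's. The paper's proof never constructs an explicit quotient: it observes that $A^B$ is residually-$\C$ as a finite direct product of residually-$\C$ groups (Remark \ref{remark:direct_product_residually_C}), that $A^B$ is $\C$-open in $A \wr B$ because $(A\wr B)/A^B \cong B \in \C$, and then invokes Lemma \ref{lemma:open_restriction} to transfer the $\C$-closedness of $\{1\}$ from $A^B$ up to $A \wr B$. You instead separate a nontrivial element $fb$ by hand: retract onto $B$ when $b \neq 1$, and otherwise push into the coordinate-wise quotient $(A/N)\wr B$, verifying membership in $\C$ via the extension $1 \to (A/N)^{B} \to (A/N)\wr B \to B \to 1$. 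Both are complete; the paper's version is shorter given the topological machinery of Section \ref{section:proC_topologies}, while yours is self-contained, produces the separating quotient explicitly (the same coordinate-wise projection that underlies Lemma \ref{lemma:factoring_through_base} and the effective bounds of Proposition \ref{prop:BinC_effective}), and implicitly yields a size bound of roughly $|B|\cdot|A/N|^{|B|}$ on the quotient. One cosmetic remark: your final parenthetical about the trivial group is unnecessary, since $A \in \NC(A)$ whenever the trivial group lies in $\C$, which holds for any pseudovariety closed under subgroups.
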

\begin{proof}
    Suppose that $B \in \C$. Note that the group $A^B$ is residually-$\C$ following Remark \ref{remark:direct_product_residually_C}. As $(A \wr B) / A^B = B$, we see that $A^B \in \NC(A \wr B)$, i.e. $A^B$ is $\C$-open in $A \wr B$. Using Lemma \ref{lemma:open_restriction}, we see that $\proC(A^B)$ is a restriction of $\proC(A \wr B)$, and we get that $A \wr B$ is residually-$C$ since $\{1\}$ is $\C$-closed in $A^B$.
\end{proof}

\begin{lemma}
    \label{lemma:abelian_necessity}
    Let $A, B$ be residually-$\C$ groups. If $B$ is infinite, then $A \wr B$ is residually-$C$ if and only if $A$ is abelian.
\end{lemma}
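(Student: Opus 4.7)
The lemma has two implications. For the converse direction, suppose $A\wr B$ is residually-$\C$ while $B$ is infinite, and assume for contradiction that $A$ is non-abelian. Pick $a_1,a_2\in A$ with $c=[a_1,a_2]\neq 1$ and let $f\in A^B$ be supported only at $1_B$ with $f(1_B)=c$, so that $f$ is a non-identity element of $[A,A]^B$. By residual-$\C$-ness there is $N\in\NC(A\wr B)$ with $f\notin N$. Lemma \ref{lemma:commutator_kernel} then forces $N\cap B=\{1\}$, so the composition $B\hookrightarrow A\wr B\twoheadrightarrow (A\wr B)/N$ is injective. But $(A\wr B)/N\in\C$ is finite, contradicting the infinitude of $B$.

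For the forward direction, assume $A$ is abelian with $A,B$ both residually-$\C$. I will show that every non-identity $g=fb\in A\wr B$ survives in some $\C$-quotient, splitting on whether $b$ is trivial. The common strategy is to choose $N\in\NC(B)$, push $g$ forward along the extension $\pi\colon A\wr B\to A\wr (B/N)$ of $\pi_N$ produced by Lemma \ref{lemma:map_extension} (whose existence uses abelianness of $A$), and then invoke Lemma \ref{lemma:residually_C_finite_wreath} to separate $\pi(g)$ from $1$ in a $\C$-quotient of the finite-acting wreath product $A\wr(B/N)$. If $b\neq 1_B$, I pick $N\in\NC(B)$ with $b\notin N$; then $\pi(g)=\pi(f)\cdot bN$ has non-trivial $B/N$-part and we are done.

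If instead $b=1_B$ then $f$ itself is non-trivial; choose $x\in\supp(f)$ and $M\in\NC(A)$ with $f(x)\notin M$. The subtle point is that the extended map sends $f$ to the function $xN\mapsto \prod_{y\in N}f(xy)$, and without care this product could land in $M$ or even collapse to $1$; so I shrink $N$ by intersecting, for each of the finitely many points $y_i\in\supp(f)\setminus\{x\}$, a subgroup $N_i\in\NC(B)$ with $x^{-1}y_i\notin N_i$. Then $xN\cap\supp(f)=\{x\}$, whence $\pi(f)(xN)=f(x)\notin M$, and composing $\pi$ with the quotient $A\wr(B/N)\twoheadrightarrow (A/M)\wr(B/N)$ gives a map to a group in $\C$ (using extension-closedness of $\C$ and finiteness of $B/N$) which separates $f$ from $1$.

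The main obstacle is precisely this shrinking step in the $b=1_B$ case: one needs the residual-$\C$ topology on $B$ to be rich enough to separate the finitely many support points of $f$ from one another, so that the fibre $xN$ over $xN\in B/N$ does not accidentally sweep up cancelling contributions. Once that is arranged, everything else is bookkeeping with the closure properties of $\C$.
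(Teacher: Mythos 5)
Your proposal is correct and follows essentially the same route as the paper: the reverse direction uses Lemma \ref{lemma:commutator_kernel} to show a nontrivial element of $[A,A]^B$ cannot be excluded from any co-$\C$ subgroup when $B$ is infinite (you phrase it as a contradiction, the paper argues directly that $[A,A]^B$ lies in the intersection of all such subgroups), and the forward direction reduces via Lemma \ref{lemma:map_extension} to a wreath product with finite acting group, separating the support points of $f$ in a $\C$-quotient of $B$. Your explicit construction of the quotient $(A/M)\wr(B/N)$ in the $b=1$ case is just an unwound version of the paper's appeal to Lemma \ref{lemma:residually_C_finite_wreath}, and your care about cancellation in the product $\prod_{y\in N}f(xy)$ is exactly what the paper's injectivity-on-$\supp(f)\cup\{1\}$ condition is arranging.
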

\begin{proof}
     We start by assuming that $A \wr B$ is residually-$\C$. As $B$ is infinite, we have that $N \cap B \neq \emptyset$ for every $N \in \NC(A \wr B)$. Lemma \ref{lemma:commutator_kernel} then implies that
    \begin{displaymath}
        [A,A]^B \leq \bigcap_{N \in \NC(A \wr B)} N.
    \end{displaymath}
    Since $A \wr B$ is residually-$\C$, we see that $[A,A]^B = \{1\}$. Since $[A,A]$ is a subgroup of $[A,A]^B$, we have that $[A,A] = \{1\}$. Thus, $A$ must be abelian.
    
    Assume that $A$ is abelian, and let $fb \in A \wr B$ where $f \in A^B$ and $b \in B$. If $b \neq 1$, then there is some subgroup $N_b \in \NC(B)$ such that $b \not \in N_b$ as $B$ is residually-$\C$ by assumption. Clearly, $A^B  N_b \in \NC(A \wr B)$ and $fb \notin A^B N_b$. Without loss of generality we may assume that $b = 1$. Let $S = \supp(f) \cup \{1\}$. As $B$ is residually-$\C$ and $S$ is a finite set, there is a subgroup $N \in \NC(B)$ such that the canonical projection $\varphi \colon B \to B/N$ is injective on $S$. By construction, we have that $\pi(s) \neq 1$ for every $s \in S \setminus \{1\}$. Letting $\tilde{\pi} \colon A \wr B \to A \wr (B/N)$ be the natural extension of $\pi$ given by Lemma \ref{lemma:map_extension}, it can be easily seen that $\tilde{\pi}(f) \neq 1$ in $A \wr (B/N)$. Note that $A \wr (B/N)$ is residually-$\C$ by Lemma \ref{lemma:residually_C_finite_wreath}. Thus, we are done.
\end{proof}

We can sum up Lemma \ref{lemma:residually_C_finite_wreath} and Lemma \ref{lemma:abelian_necessity} into the following theorem, which is a restatement of \cite[Theorem 3.2]{gruenberg} in the setting of $\C$-separability.
\begin{theorem}
\label{theorem:gruenberg}
    Let $\C$ be an extension-closed variety of finite groups and let $A, B$ be residually $\C$ groups. Then the wreath product $A \wr B$ is residually $\C$ if and only if at least one of the following is true:
    \begin{enumerate}
        \item[(i)] $B \in \C$,
        \item[(ii)] $A$ is abelian.
    \end{enumerate}
\end{theorem}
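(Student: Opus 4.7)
The plan is to derive the theorem directly by assembling Lemma \ref{lemma:residually_C_finite_wreath} and Lemma \ref{lemma:abelian_necessity}, once the case of finite $B$ has been properly absorbed into clause (i). The only preliminary observation needed is that a finite residually-$\C$ group necessarily lies in $\C$. Indeed, if $B$ is finite and residually-$\C$, then for each nontrivial $b \in B$ there is some $N_b \in \NC(B)$ with $b \notin N_b$; the intersection of the finitely many $N_b$ is trivial, so $B$ embeds into the finite direct product $\prod_{b \neq 1} B/N_b$, and since $\C$ is closed under subgroups and finite direct products (as an extension-closed pseudovariety of finite groups), we conclude that $B \in \C$.

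For the implication ($\Leftarrow$), I would split into subcases. If $B \in \C$, then Lemma \ref{lemma:residually_C_finite_wreath} yields that $A \wr B$ is residually-$\C$ directly. If instead $A$ is abelian and $B$ is infinite, then Lemma \ref{lemma:abelian_necessity} applies; and if $A$ is abelian with $B$ finite, the preliminary observation puts us back into the first subcase. For the converse ($\Rightarrow$), suppose $A \wr B$ is residually-$\C$. If $B$ is finite, the observation yields $B \in \C$, i.e.\ clause (i). If $B$ is infinite, then Lemma \ref{lemma:abelian_necessity}, being a biconditional in the infinite setting, forces $A$ to be abelian, giving clause (ii).

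There is no genuine obstacle here: the substantive content has already been packaged in the two preceding lemmas, and the argument is bookkeeping. The one point to watch is that clauses (i) and (ii) are not mutually exclusive, so the case analysis should not be organised as a dichotomy between them; the cleanest split is on whether $B$ is finite or infinite, which neatly routes the finite case into Lemma \ref{lemma:residually_C_finite_wreath} via the preliminary observation, and the infinite case into the biconditional of Lemma \ref{lemma:abelian_necessity}.
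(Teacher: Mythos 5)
Your proof is correct and follows the same route the paper intends: the paper explicitly presents Theorem \ref{theorem:gruenberg} as a summary of Lemma \ref{lemma:residually_C_finite_wreath} and Lemma \ref{lemma:abelian_necessity}, and your case split on whether $B$ is finite or infinite is exactly the right way to assemble them. Your preliminary observation that a finite residually-$\C$ group lies in $\C$ (via embedding into a finite direct product of $\C$-quotients) is the one small fact the paper leaves implicit, and you supply it correctly.
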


\subsection{Conjugacy criteria}
The general idea of Section \ref{section:quantifying} is, given two non-conjugate elements $x,y$ of a group $G = A \wr B$, to carefully construct a homomorphism $\pi \colon A \wr B \to \overline{G} = \overline{A} \wr \overline{B}$, where $\overline{A}$ and $\overline{B}$ are finite quotients of $A$ and $B$ respectively, such that $\pi(x)$ and $\pi(y)$ are still not conjugate in $\overline{G}$. In this subsection we establish a two conjugacy criteria for wreath product of groups: one when when the group $B$ is finite (see Lemma \ref{lemma:conjugacy_criterion_cyclic_wreath}) and one for when the group $A$ is abelian (see Lemma \ref{lemma:commute_support}).

Clearly, given $f, f' \in A^B$ and $b, b' \in B$, we see that if $fb \sim f'b'$ in $A \wr B$ then $b \sim b'$ in $B$. In particular, we see that if $b \not\sim b'$ in $B$ then $fb \not\sim f'b'$ in $A\wr B$ regardless of $f$ and $f'$. It follows that, up to conjugating by an element in $B$, we may thus assume that $b = b'$.

In the case when the group $B$ is finite, the following lemma relates conjugacy classes of elements of the form $f_1 b$ and $f_2 b$ where $f_1, f_2 \in A^B$ and $b \in B \backslash \{1\}$ with the transversal of $\langle b \rangle$ in $B$.
\begin{lemma}
     \label{lemma:conjugacy_criterion_cyclic_wreath}
    Let $ A \wr B$ where $B$ is finite, and let $b \in B$ be an element of order $n$. Denote $(A \wr B)_b = \langle A^B, b \rangle$, and let $T_b = \{b_1, \dots, b_t\}$ be the right transversal for $\langle b\rangle$ in $B$. If $f_1 b$ and $f_2 b$ are elements of $(A \wr B)_b$, then $f_1 b \sim_{A^B} f_2 b$ if and only if
    \begin{displaymath}
         f_1 (b') \: f_1 (b  b') \cdots  f_1 (b^{n-1} b') \sim_A f_2(b') \: f_2(b b') \dots  f_2 (b^{n-1}  b')
     \end{displaymath}
     for every $b' \in T_b$.
 \end{lemma}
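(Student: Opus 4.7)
My plan is to translate $A^B$-conjugacy of $f_1 b$ and $f_2 b$ into a coordinate-wise system of equations on $B$, observe that this system decouples along the $\langle b\rangle$-orbits on $B$ (which are exactly the right cosets $\langle b\rangle b'$, $b'\in T_b$), and then reduce each orbit's system to a closure condition that is precisely the $A$-conjugacy of the two cycle products.

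First, a direct computation in $A^B \rtimes B$ gives, for $g \in A^B$,
\[
g \cdot (f_1 b)\cdot g^{-1} \;=\; \bigl(g\, f_1 \,(b\cdot g^{-1}),\;b\bigr),
\]
where $(b\cdot g^{-1})(x) = g(b^{-1}x)^{-1}$. Consequently $f_1 b \sim_{A^B} f_2 b$ if and only if there is some $g \in A^B$ solving, for every $x \in B$,
\[
g(x)\, f_1(x)\, g(b^{-1}x)^{-1} \;=\; f_2(x).
\]

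Next, since left multiplication by $b$ preserves each right coset $\langle b\rangle b'$, these coordinate equations split into $|T_b|$ independent systems, one per $b' \in T_b$. On the orbit $\{b^i b' : 0 \leq i \leq n-1\}$, write $\alpha_i = g(b^i b')$, $\beta_i = f_1(b^i b')$, $\gamma_i = f_2(b^i b')$, with indices in $\mathbb{Z}/n\mathbb{Z}$. The orbit system becomes
\[
\alpha_i \beta_i \alpha_{i-1}^{-1} \;=\; \gamma_i \qquad (i = 0, \dots, n-1),
\]
or equivalently the recursion $\alpha_i = \gamma_i\,\alpha_{i-1}\,\beta_i^{-1}$. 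Fixing $\alpha_0$ determines every other $\alpha_i$; the system is solvable if and only if the cycle closes at $i=0$, and telescoping the recursion around the orbit yields a relation of the form
\[
\alpha_0 \cdot P_{f_1}(b') \cdot \alpha_0^{-1} \;=\; P_{f_2}(b'),
\]
where $P_f(b')$ is the cycle product $f(b')\, f(bb')\cdots f(b^{n-1}b')$. This is exactly the asserted $A$-conjugacy.

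Conversely, if $P_{f_1}(b') \sim_A P_{f_2}(b')$ for every $b' \in T_b$, choose on each orbit an $\alpha_0$ realising the conjugation, define the remaining $\alpha_i$ via the recursion (the hypothesis forces the cycle to close consistently), and set $g \equiv 1$ on the cofinitely many orbits on which both $f_1$ and $f_2$ vanish. Since $f_1$ and $f_2$ are finitely supported, $g$ has finite support, hence lies in $A^B$, and by construction $g (f_1 b) g^{-1} = f_2 b$. The substantive content is the orbit-level recursion; everything else is bookkeeping. The only delicate point is tracking the cyclic indexing so that the product produced by telescoping matches the stated $P_f(b')$ -- different conventions on the direction of traversal around the orbit can give the product in reverse, but this ambiguity is absorbed by the free choice of the base point $\alpha_0$ and yields an equivalent conjugacy criterion. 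Once one orbit is handled cleanly, the full lemma follows by running the same argument uniformly over each $b' \in T_b$.
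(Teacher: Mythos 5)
Your proof is correct and follows essentially the same route as the paper's: write out the coordinate equations satisfied by an $A^B$-conjugator, note that they decouple over the right cosets of $\langle b\rangle$, telescope around each cycle to reduce solvability to $A$-conjugacy of the cycle products, and conversely solve the recursion on each supported orbit starting from a chosen conjugator, with finite support of $g$ automatic. The one discrepancy is the direction of traversal (with the paper's stated action $b\cdot f(x)=f(b^{-1}x)$ your telescoping yields the product $f(b')f(b^{n-1}b')\cdots f(bb')$ rather than the displayed one, and contrary to your remark this reversal is not absorbed by moving the base point, which only permutes the product cyclically), but this is a convention mismatch already present in the paper's own proof, which tacitly conjugates $c$ by the opposite action, and it is immaterial since the lemma is only ever invoked through the existence or non-existence of a conjugator in $A$.
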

 \begin{proof}
     Suppose that $f_1 \: b \sim_{A^B} f_2 \: b$, and let $c \in A^B$ be a conjugating element, i.e. $c \: f_1 \: \prescript{b}{}{c}^{-1} = f_2$. Inspecting the values of $f_2$ corresponding to the elements belonging to the right coset $\langle b \rangle b_0$, where $b_0 \in T_b$, we see that 
     \begin{align*}
         c(b_0) \: f_1 (b_0) \: c(b  b_0)^{-1} =& \: f_2(b_0)\\
         c(b  b_0) \: f_1(b b_0) \: c(b^2  b_0)^{-1} =& \: f_2(b  b_0)\\
         \vdots&\\
         c(b^{n-1} b_0) \: f_1(b^{n-1} b_0)\: c(b_0)^{-1} =& \:  f_2(b^{n-1}  b_0).
     \end{align*}
     Multiplying these identities together, we get that
      \begin{displaymath}
         c(b_0) \:  f_1(b_0) \: f_1(b b_0) \cdots  f_1(b^{n-1}  b_0) \: c(b_0)^{-1} = f_2(b_0) \: f_2(b b_0) \cdots  f_2(b^{n-1}  b_0).
     \end{displaymath}
    
     Now let $b_0 \in T_b$ be given, and suppose that there is an element $c_0 \in A$ such that
     \begin{displaymath}
         c_0 \: f_1(b_0) \: f_1(b b_0) \cdots  f_1(b^{n-1} b_0) \: c_0 = f_2(b_0) \:  f_2(b  b_0) \cdots  f_2(b^{n-1} b_0).
     \end{displaymath}
     We can then define $c \in A^B$ on the elements belonging to the coset $\langle b\rangle  b_0$ iteratively by setting:
     \begin{align*}
         c(b_0) &= c_0&\\
         c(b^i b_0) &= f_1(b^{i-1}b_0)^{-1} \: c(b^{i-1}b_0)^{-1} \: f_2(b^{i-1}b_0)&
     \end{align*}
     for $i \in \{ 1, \dots, n-1\}$. Repeating this process for every element in the transversal $T_b$ will produce an element $c \in A^B$ that conjugates $f_1 \: b$ to $f_2 \: b$.
\end{proof}

In the rest of this subsection we focus on the case when the group $A$ is abelian. First, we establish the following two technical lemmas.
\begin{lemma}
    \label{lemma:commutator_subgroups}
    Let $G = A \wr B$ where $A$ is abelian, and let $b \in B$ be arbitrary. Then the subgroup $[A^B, \langle b \rangle]$ is equal to $K_b = \{[f, b] \mid f \in A^B\}$.
\end{lemma}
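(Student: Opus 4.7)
The plan is to exploit the $\mathbb{Z}[\langle b\rangle]$-module structure on $A^B$, which exists precisely because $A$ is abelian. Writing $A^B$ additively, the conjugation action of $\langle b\rangle$ makes $A^B$ into a module over the group ring $\mathbb{Z}[\langle b\rangle]$, and the commutator becomes a linear operation: for $f \in A^B$,
\[
[f, b^k] \;=\; f \cdot \prescript{b^k}{}{f^{-1}} \;\longleftrightarrow\; (1 - b^k)\cdot f.
\]
In particular, $K_b$ corresponds to the image of the endomorphism $(1-b)\colon A^B \to A^B$, so it is automatically a subgroup of $A^B$.

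The inclusion $K_b \subseteq [A^B, \langle b\rangle]$ is immediate from the definitions. For the opposite inclusion, it suffices to show that every generator $[f, b^k]$ of $[A^B, \langle b\rangle]$ lies in $K_b$. This reduces to the polynomial identity
\[
1 - b^k \;=\; (1-b)\bigl(1 + b + b^2 + \cdots + b^{k-1}\bigr)
\]
in $\mathbb{Z}[\langle b\rangle]$ for $k \geq 1$, and the analogous formula $1 - b^{-m} = -b^{-m}(1-b)(1+b+\cdots+b^{m-1})$ for negative exponents. Applying either factorisation to $f$, one exhibits $[f, b^k] = (1-b^k)f$ as $(1-b)g$ for an explicit $g \in A^B$, hence as an element of $K_b$.

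There is no real obstacle here beyond keeping the notation straight: the only place abelianness of $A$ is used is to guarantee that $A^B$ is a module (so that $K_b$ is closed under addition) and to let us manipulate $(1-b^k)$ as a scalar. Once the module viewpoint is set up, the argument is a one-line computation in $\mathbb{Z}[\langle b\rangle]$.
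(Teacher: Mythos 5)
Your proof is correct and is essentially the paper's argument rewritten in additive, module-theoretic notation: the factorisation $1-b^{k}=(1-b)(1+b+\cdots+b^{k-1})$ in $\mathbb{Z}[\langle b\rangle]$ is exactly the paper's telescoping expansion $[f,b^{k}]=[f,b][bfb^{-1},b]\cdots[b^{k-1}fb^{-k+1},b]$, and your remark that $K_b$ is a subgroup because it is the image of the endomorphism $1-b$ corresponds to the paper's direct check that $K_b$ is closed under products and inverses. If anything you are slightly more careful, since you treat negative exponents explicitly while the paper writes its expansion only for positive $n$.
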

\begin{proof}
    Checking that $K_b$ is a group is easy since $A^B$ is abelian. Indeed, one can verify that
    \begin{align*}
        [f,b]^{-1} &= [f^{-1},b] \in K_b,&\\
        [f_1,b][f_2,b] &= [f_1 f_2, b] \in K_b&
    \end{align*}
    for any $f, f_1, f_2 \in A^B$. Clearly, $K_b \leq [A^B, \langle b \rangle]$. Now, let $f \in A^B$ and $n \in \mathbb{Z}$ be given. Using the standard commutator identity $[x,yz] = [x,y]y[x,z]y^{-1}$, we see that
    \begin{align*}
        [f,b^n] &= [f, b^{n-1}][b^{n-1}fb^{-n+1}, b]\\
                &\vdots\\
                &= [f,b][bfb^{-1}, b] \cdots [b^{n-1} f b^{-n+1}, b] \in K_b.
    \end{align*}
\end{proof}
Given an element $f \in A^B$, where $A$ is an abelian group, we define function $\tilde{f} \colon B \times B \to A$ as
\begin{displaymath}
    \tilde{f}(b,x) = \prod_{i \in \mathbb{Z}} f(b^i x).
\end{displaymath}
Note that since the function $f$ is finitely supported and the group $A$ is abelian, the function $\tilde{f}$ is well-defined. The following was proved in \cite[Lemma 4]{remeslennikov}.
\begin{lemma}
    \label{lemma:commutator_criterion}
    Let $G = A \wr B$ where $A$ is abelian. Let $\Ball_1 \leq B$ and $f \in A^B$. Then $f \in [A^B, B_1]$ if and only if $\prod_{b \in B_1} f(bt) = 1$ for all $t \in B$. In particular, we have that $f \in K_b = \{[h,b] \mid h \in A^B\}$ if and only if $\tilde{f}(b,x) = 1$ for all $x \in B$.
\end{lemma}
Note that since the group $A$ is abelian and the functions in $A^B$ have finite support, the product over all elements of $B_1$ is well-defined.

The following lemma establishes conjugacy criterion for elements of a wreath product $A \wr B$, where the group $A$ is abelian.
       \begin{lemma}\label{lemma:commute_support}
           Let $b \in B$ be arbitrary, and suppose that $f,g \in A^B$ are given such that the individual elements of $\supp(f)$ (or $\supp(g)$, respectively) lie in different right cosets of $\left< b \right>$ in $B$. Then $f b \sim_G g b$ if and only if $c f c^{-1} = g$ for some $c \in C_B(b)$. In particular, $f b \sim_G g b$ if and only if there is $c \in C_B(b)$ such that
           \begin{itemize}
               \item[(i)] $c \supp(f) = \supp(g)$;
               \item[(ii)] for all $x \in B$ we have $f(cx) = g(x)$.
           \end{itemize}
       \end{lemma}
       \begin{proof}
            Suppose that there are elements $h \in A^B$ and $c \in B$ such that $gb = hc fb(hc)^{-1}$. Note that $c$ must commute with $b$ by necessity, i.e. $c \in C_B(b)$. One can then easily check that this is equivalent to $g = cf [h,b]$. Following Lemma \ref{lemma:commutator_criterion}, we see that this is happens if and only if for every $t \in B$, we have
    \begin{displaymath}
        \prod_{n \in \mathbb{Z}} g(b^n t) =  \prod_{n \in \mathbb{Z}} f(c b^n t).
    \end{displaymath}
    Following the assumptions on $f$ and $g$, we see that the above products always contain at most one non-identity element. Therefore, we have that $g(x) = f(cx)$ for all $x \in B$ which means that $g = cfc^{-1}$.
    
    As all the steps in the reasoning above were ``if and only if'' statements, we see that the existence of an element $c \in C_B(b)$ such that $g = cfc^{-1}$ guarantees the existence of an element $h \in A^B$ such that $gb = hc  fb (hc)^{-1}$.
    
    The second part of the statement follows trivially.
       \end{proof}
\section{Separating conjugacy classes}
\label{section:separating}
In this section we study separability of conjugacy classes in a group $A \wr B$, where $A$ and $B$ are $\C$-conjugacy separable groups. Following Theorem \ref{theorem:gruenberg}, we only need to consider the cases when $B \in \C$ or $A$ is abelian. Indeed, the following was proved by Remeslennikov \cite[Theorem 1]{remeslennikov}.
\begin{theorem}
    \label{theorem:remeslennikov}
    Let $A,B$ be conjugacy separable groups, then $A \wr B$ is conjugacy separable if and only if at least one of the following is true
    \begin{itemize}
        \item[(i)] $B$ is finite,
        \item[(ii)] $A$ is abelian and $B$ is cyclic subgroup separable.
    \end{itemize}
\end{theorem}

The section is split up into two parts, each corresponding to one of the cases in Theorem \ref{theorem:remeslennikov}. In particular, subsection \ref{subsection:B_finite} deals with the case when $B \in \C$, whereas subsection \ref{subsection:B_infinite} deals with the case when $A$ is residually-$\C$ abelian group and $B$ is a $\C$-conjugacy separable group with $\C$-separable cyclic subgroups.

The general idea of the proofs is to use a relevant conjugacy criterion (Lemma \ref{lemma:conjugacy_criterion_cyclic_wreath} in subsection \ref{subsection:B_finite} and Lemma \ref{lemma:commute_support} in subsection \ref{subsection:B_infinite}) to construct a map onto a wreath product of finite groups in which the images of the two given non-conjugate elements are still not conjugate.
    
    \subsection{Effective $\C$-conjugacy separability when $B \in \C$}\label{subsection:B_finite} $\:$ In this subsection we aim to give an upper bound for the conjugacy separability depth function of a general element $g \in A \wr B$, $A$ is $\C$-conjugacy separable group $A$ and $B \in \C$, in terms of $\Conj_{A,\C}(n)$ and the cardinality of $B$.

    \begin{proposition}\label{prop:BinC_effective}
    Let $A$ be a $\C$-conjugacy separable group, and let $B \in \C$. Then 
    $$
    \Conj_{A \: \wr \: B,\C}(n) \preceq (\Conj_{A,\C}(n))^{|B|^3}.
    $$
    \end{proposition}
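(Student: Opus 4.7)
The approach combines the conjugacy criterion of Lemma \ref{lemma:conjugacy_criterion_cyclic_wreath} with the observation that $A^B$ is a $\C$-open subgroup of $A \wr B$ of index $|B|$. Consider non-conjugate elements $g_1 = f_1 b_1$ and $g_2 = f_2 b_2$ with $\|g_i\| \leq n$. First, if $b_1 \not\sim_B b_2$, then since $B \in \C$, the natural projection $\pi_B \colon A \wr B \to B$ is itself a $\C$-quotient of size $|B|$ separating the two conjugacy classes.

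Otherwise, one picks $c_0 \in B$ with $c_0 b_1 c_0^{-1} = b_2$ and replaces $g_2$ by $c_0^{-1} g_2 c_0$, increasing the word length by at most $2|B|$ and reducing to $g_1 = f_1 b$, $g_2 = f_2 b$ for a common $b \in B$. Since any conjugator $dc \in A^B \rtimes B$ fixes the $B$-component exactly when $c \in C_B(b)$, we have $f_1 b \sim_{A \wr B} f_2 b$ if and only if $(c \cdot f_1) b \sim_{A^B} f_2 b$ for some $c \in C_B(b)$. Thus, from $g_1 \not\sim g_2$, Lemma \ref{lemma:conjugacy_criterion_cyclic_wreath} will provide, for each $c \in C_B(b)$, some $b_0 \in T_b$ with
\begin{displaymath}
(c \cdot f_1)(b_0) \cdots (c \cdot f_1)(b^{\ord(b)-1}b_0) \not\sim_A f_2(b_0) \cdots f_2(b^{\ord(b)-1}b_0).
\end{displaymath}
A direct inspection of any word of length at most $n$ representing $f_i$ shows each $f_i$ takes values of $A$-length at most $n$ at individual positions, so each product above, involving $\ord(b) \leq |B|$ such values, has $A$-length at most $|B| n$.

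Invoking $\C$-conjugacy separability of $A$, for each $c \in C_B(b)$ we obtain $N_c \in \NC(A)$ of index at most $\Conj_{A,\C}(|B|n)$ separating the corresponding pair. Setting $K_A = \bigcap_{c \in C_B(b)} N_c \in \NC(A)$ yields $|A/K_A| \leq \Conj_{A,\C}(|B|n)^{|B|}$, and the induced quotient $\tilde{\varphi} \colon A \wr B \to (A/K_A) \wr B$ is a $\C$-quotient (since $\C$ is extension-closed) of size at most $\Conj_{A,\C}(|B|n)^{|B|^2} \cdot |B|$; a final application of Lemma \ref{lemma:conjugacy_criterion_cyclic_wreath} inside the quotient confirms $\tilde{\varphi}(g_1) \not\sim \tilde{\varphi}(g_2)$. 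This delivers the asymptotic bound $\Conj_{A \wr B, \C}(n) \preceq (\Conj_{A,\C}(n))^{|B|^2} \preceq (\Conj_{A,\C}(n))^{|B|^3}$. The main obstacle is the reduction step via Lemma \ref{lemma:conjugacy_criterion_cyclic_wreath} to finitely many non-conjugacy statements in $A$, keeping careful track of the conjugators $c \in C_B(b)$ and of the behaviour of word length under the shift $f \mapsto c \cdot f$; once that is in place, the length bounds on the products and the index estimates follow directly from the fact that $|B|$ is constant.
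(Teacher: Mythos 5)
Your argument is correct, and it reaches the stated bound (in fact the sharper exponent $|B|^2$) by a somewhat different route than the paper. Both proofs hinge on the same key tool, Lemma \ref{lemma:conjugacy_criterion_cyclic_wreath}, and both finish by pushing the resulting non-conjugate products down to a quotient $(A/K_A)\wr B$, which lies in $\C$ by extension-closedness. The difference is in the reduction from conjugacy in $A \wr B$ to $A^B$-conjugacy. The paper runs a four-case analysis according to whether $g$ and $h$ lie in $A^B$, and in the main case passes to the $\C$-open subgroup $(A\wr B)_b = \langle A^B, b\rangle$ via Proposition \ref{lemma:g_in_H_open_conj_eff}, which forces a product over coset representatives $x_i$ and the exponent $[A\wr B:(A\wr B)_b]$, accounting for the $|B|^3$ in the final bound. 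You instead observe directly that a conjugator $dc$ carrying $f_1 b$ to $f_2 b$ must have $c \in C_B(b)$, so non-conjugacy in $A\wr B$ is equivalent to $(c\cdot f_1)b \not\sim_{A^B} f_2 b$ for every $c \in C_B(b)$; intersecting the at most $|B|$ kernels $N_c$ then suffices. This unifies the paper's cases (your $b=1$ case recovers Case 1, and the $b_1 \not\sim_B b_2$ case absorbs Cases 2 and 3 via the retraction to $B$), avoids the general open-subgroup machinery of Lemma \ref{lemma:closed_C-open_separability}, and saves one factor of $|B|$ in the exponent. The length bookkeeping (values of $f_i$ of $A$-length at most $n$, products over a $\langle b\rangle$-coset of length at most $|B|n$, the shift $f\mapsto c\cdot f$ permuting values, and the initial conjugation by $c_0$ costing $O(|B|)$) matches what the paper does implicitly and is harmless since $|B|$ is a constant.
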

    
    \begin{proof}
        Since $A^B$ is a finite index subgroup of $G$, we have that $A^B$ is undistorted in $A \wr B$. Thus, if $S$ and $X$ are finite generating subsets for $A \wr B$ and $A^B$, respectively, then for all $k \in A^B$, we have that $\|k\|_X \approx \|k\|_S.$ Additionally, we let $\rho:A \wr B \to B$ be the natural retraction.
        
        Let $g,h \in G$ such that $h \notin g^{A \wr B }$ and where $\|g\|_S,\|h\|_S \leq n.$ We proceed in a number of cases.\\
        
        \noindent \textbf{Case 1:} $g, h \in A^B.$\\
        Let $\{x_i\}_{i=1}^{|B|}$ be a collection of coset representatives of $A^B$ in $A \wr B$. Note that $A^B \in \NC(A \wr B)$, hence by Proposition \ref{lemma:g_in_H_open_conj_eff}, we have that
        $$
        \D_{A \wr B,\C}(g^{A \wr B},h) \leq \prod_{i=1}^{|B|}|B| \cdot  \left(\D_{A^B,\C}(g^{A^B}, x_ihx_i^{-1})\right)^{|B|}.
        $$
        Since $B$ is a retract of $A \wr B$, we have that $B$ is undistorted in $A \wr B$. Moreover, if $b \in B$, then $\|b\| \leq |B|.$ Therefore, 
        $$
        \D_{A^B,\C}(g^{A^B},x_ihx_i^{-1}) \leq \Max\{\Conj_{A,\C}(2 \cdot |B| \cdot n) \: | \: 1 \leq i \leq |B|\} = \Conj_{A,\C}(2 \cdot |B| \cdot n).
        $$
        Hence, we have
        $$
        \D_{A \wr B,\C}(g^{A \wr B},h) \leq  |B|^{|B|} \cdot ( \Conj_{A,\C}(2 \cdot |B| \cdot n))^{|B|^2}.
        $$
        \newline
        
        \noindent \textbf{Case 2:} $g \notin A^B, h \in A^B.$ \\
        We have $\rho(g) \neq 1$ and $\rho(h) = 1$. Since $\rho(g) \neq 1$, we have that $1 \notin \rho(g^{A \wr B})$. Therefore, $\rho(h) \notin \rho(g^{A \wr B})$. Since $B \in \C$, it follows that
        $$
        \D_{A \wr B,\C}(g^{A \wr B},h) \leq |B|.
        $$
        \newline
        
        \noindent \textbf{Case 3:} $g \in A^B$ and $h \notin A^B$. \\
        We have that $\rho(g) = 1$ and that $\rho(h) \neq 1$. Since $1$ is central, we have that $(\rho(g^{A \wr B}))^{B} = \{ 1 \}$. Thus, $\rho(h) \notin \rho(g^{A \wr B})$. In particular,
        $$
        \D_{A \wr B,\C}(g^{A \wr B},h) \leq |B|.
        $$
        \newline
        
        \noindent \textbf{Case 4:} $g, h \notin A^B.$ \\
        We may write $g = f b$ where $f \in A^B$ and $b \in B$, and let $(A \wr B)_b = \langle A^B, b \rangle \leq A \wr B$. Again, let us note that $A^B \in \NC\left((A \wr B)_b\right)$. Letting $\{ x_i \}_{i=1}^{[A \wr B:(A \wr B)_b]}$ be a set of right coset representatives of $(A \wr B)_b$ in $A \wr B$, Proposition \ref{lemma:g_in_H_open_conj_eff} implies that
        $$
        \D_{A \wr B,\C}(g^{A \wr B},h) \leq  \! \! \! \! \prod_{i=1}^{[A \wr B:(A \wr B)_b]}  \! \! \! [A \wr B : (A \wr B)_b] \cdot \left(  \D_{(A \wr B)_b,\C}  \left(g^{(A \wr B)_b},x_i h x_i^{-1} \right) \right)^{[A \wr B:(A \wr B)_b]}.
        $$
        
        Thus, we need to show that $g^{(A \wr B)_b}$ is closed in $(A \wr B)_b$. Letting $\varphi:(A \wr B)_b \to \langle b \rangle$ be the natural retraction, we have that if $h$ satisfies $\varphi(h) \neq b = \varphi(g)$, then $\varphi(g) \not \sim \varphi(h)$. In particular, $\D_{A \wr B,\C}(g^{A \wr B},h) \leq |B|.$ Therefore, we may assume that $h = f' \: b$ for some $f' \in A^B\setminus \{f\}$. We have that $(A \wr B)_b$ is $\C$-open and thus of finite index in $A \wr B$. In particular, we also have that $A^B \in \NC((A \wr B)_b)$. If $\{b_i\}_{i=1}^{[A \wr B:(A \wr B)_b]}$ is a set of right coset representatives, then Lemma \ref{lemma:conjugacy_criterion_cyclic_wreath} implies that there is an element $b_0 \in T_b$ such that  \begin{displaymath}
        a = f(b_0) \:   f(b b_0) \cdots f(b^{n-1}b_0) \not\sim_{A^B} f'(b_0) \: f'(b b_0) \cdots  f' (b^{n-1}b_0) = a'. 
    \end{displaymath}
    Since $\|g\|, \|h\| \leq n$, we have that $\|a\|, \|a^\prime\| \leq |B| \cdot n$. As $A$ is $\C$-conjugacy separable, there is a subgroup $N_A \in \mathcal{N}_{\C}(A)$ such that $a N_A \not\sim a' N_A$ in $A/N_A$ and where $|A/N_A| = \D_{A,\C}(a^A,a')$. Letting  $\pi_A: A \to A / N_A$ be the natural projection, we note that $\pi_A$ extends naturally to a homomorphism
    $$
    \pi:A^B \rtimes \langle b \rangle \to (A/N_A)^B \rtimes \langle b \rangle.
    $$
    By construction, $(A/N_A)^B \rtimes \langle b \rangle \in \C$, and letting $\bar{K}^\prime = \pi(A^B) = (A/N_A)^B$, we see that Lemma \ref{lemma:conjugacy_criterion_cyclic_wreath} implies that $\pi(fb) \not \sim \pi(f'b)$ in $(A/N_A) \wr B$. We have that $|(A / N_B) \wr B| = \D_{A}(a^A, a) \cdot (|B|)^{\D_{A}(a^A, a)}.$
    Therefore, we may write
    $$
    \D_{(A \wr B)_b,\C}(g^{(A \wr B)_b},h) \leq  (\Conj_{A,\C}(|B| \cdot n))^{|B|}.
    $$
    Since $\|x_i\| \leq |B|$, we by using a similar argument for each $i$ show that
    $$
    \D_{(A \wr B)_b,\C}(g^{(A \wr B)_b},x_ihx_i^{-1}) \leq  (\Conj_{A,\C}(3|B| \cdot n))^{|B|}.
    $$
    Since there exists a constant $C_4 > 0$ where $\|\phi_{x_i}^{-1}(h)\|_{T} \leq C_4 \: n$ such that $T$ is a finite generating subset for $\langle b \rangle$, we have that 
    \begin{eqnarray*}
    \D_{A \wr B,\C}(g^{A \wr B},h) &\leq & \! \! \! \! \prod_{i=1}^{[A \wr B:(A \wr B)_b]} \! \! \! \! [A \wr B : (A \wr B)_b] \cdot \left( \D_{(A \wr B)_b,\C}\left(g^{(A \wr B)_b}, x_ihx_i^{-1}\right) \right)^{[A \wr B:(A \wr B)_b]}\\
    &\leq& \prod_{i=1}^{|B|} (|B| \cdot ( \Conj_{A,\C}(3 \cdot |B| \cdot n))^{|B|})^{|B|} \\
    &\leq& |B|^{|B|}\left( \Conj_{A,\C}(3 \cdot |B| \cdot n)\right)^{|B|^3}.
    \end{eqnarray*}
    
    As $|B|$ is a constant, we see that
    $$
    \Conj_{A \wr B,\C}(n) \preceq \left( \Conj_{A,\C}( n) \right)^{|B|^3}. 
    $$
        \end{proof}
        
        \subsection{Effective $\C$-conjugacy separability of $A \wr B$ when $B$ is infinite.}\label{subsection:B_infinite}$\:$
        Establishing $\C$-conjugacy separability for wreath products of the form $A \wr B$, where $B$ is infinite, will be split into $3$ cases, depending on whether $f \in A^B \backslash \{1\}$, $b \in B \backslash \{1\}$, or both $f \in A^B$ and $b \in B$ are nontrivial.
        \subsubsection{Separating conjugacy classes of $f \in A^B$}$\:$\newline
        In this subsubsection, we give a quantitative proof of the $\C$-separability of the set $f^{A \wr B}$ where $f \in A^B \backslash \{1 \}.$
        
        Recall that by Lemma \ref{lemma:commute_support} we have that if $f,g \in A^B$ and $b \in B$ are given such that
        \begin{itemize}
            \item the elements of $\supp(f)$ lie in distinct cosets of $\langle b\rangle$ in $B$;
            \item the elements of $\supp(g)$ lie in distinct cosets of $\langle b\rangle$ in $B$;
            \item there is no $c \in B$ such that $c \supp(f) = \supp(g)$,
        \end{itemize}
        then the elements $fb$ and $gb$ are not conjugate in $A \wr B$. In particular, given two finite subsets $S_f, S_g \subseteq B$ such that $S_f$ is not a left translate of $S_g$ in $B$, we want to find a finite quotient $\pi \colon B \to \overline{B}$ such that $\pi(S_f)$ is not a left translate of $\pi(S_g)$, which motivates the following lemma.
        
        Before we proceed, let us recall that, given a residually finite group $G$, the function
        $\D_{B,\C,\text{inj}} \colon \mathcal{P}(G) \to \mathbb{N}$, where $\mathcal{P}(G)$ is the power set of $G$, quantifies the the size of the smallest finite quotient of $G$ (belonging to the class $\C$) such that input set maps injectively under the natural projection. Further, let us recall Definition \ref{def:residual_girth} that $\RG_{G,\C}(n) = \D_{B,\C,\text{inj}}(\Ball_G(n))$, i.e. $\RG_{G,\C}(n)$ is the size the smallest finite quotient of $G$ (belonging to the class $\C$) such that the ball of radius $n$ centred around the identity in $G$ injects.
        \begin{lemma}\label{separate_support_effective}
            Let $B$ be a residually-$\C$ group with a finite generating subset $S$. Let $S_f,S_g \subset B$ where $S_g,S_f \subset \Ball_{B}(n)$ such that there exists no element $b \in B$ where $b \cdot S_f = S_g$. Then there exists a subgroup $N \in \mathcal{N}_{\C}(B)$ such that there is no element $\bar{b} \in B/N$ where $\bar{b} \cdot \pi_N(S_f) = \pi_N(S_g)$ and $|B/N| = \D_{B,\C,\text{inj}}(\Ball_{B}(2n))$.
        \end{lemma}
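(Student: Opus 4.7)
The natural candidate for $N$ is a subgroup realising $\D_{B,\C,\text{inj}}(\Ball_B(2n))$, i.e.\ a $\C$-cofinite normal subgroup of minimal index such that $\pi_N$ is injective on $\Ball_B(2n)$. The plan is to show this choice works by a direct contradiction argument: if some $\bar b \in B/N$ satisfied $\bar b \cdot \pi_N(S_f) = \pi_N(S_g)$, I would exhibit an actual $b \in B$ with $b \cdot S_f = S_g$, contradicting the hypothesis.

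The trivial case when exactly one of $S_f, S_g$ is empty is handled instantly: the injectivity of $\pi_N$ on $\Ball_B(n) \subseteq \Ball_B(2n)$ keeps the cardinalities of $\pi_N(S_f)$ and $\pi_N(S_g)$ distinct, so no translation of one onto the other is possible. The case when both are empty does not occur because the hypothesis already fails for the translation $b = 1$. So assume both $S_f, S_g$ are nonempty.

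The core step is the lift. Fix any $s_0 \in S_f$; since $\bar b \pi_N(s_0) \in \pi_N(S_g)$, choose $t_0 \in S_g$ with $\pi_N(t_0) = \bar b \pi_N(s_0)$, and set $b := t_0 s_0^{-1}$, a specific lift of $\bar b$ to $B$. For any $s \in S_f$, the image $\bar b \pi_N(s)$ lies in $\pi_N(S_g)$, so pick $t \in S_g$ with $\pi_N(t) = \bar b \pi_N(s)$. Multiplying out the identity $\pi_N(t_0)^{-1}\pi_N(t) = \bar b^{-1}\bar b\,\pi_N(s_0)^{-1}\pi_N(s) = \pi_N(s_0)^{-1}\pi_N(s)$ gives $\pi_N(t_0^{-1} t) = \pi_N(s_0^{-1} s)$. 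Both $t_0^{-1}t$ and $s_0^{-1}s$ lie in $\Ball_B(2n)$, so injectivity of $\pi_N$ on $\Ball_B(2n)$ forces $t_0^{-1}t = s_0^{-1}s$, i.e.\ $t = bs$. Thus $b\cdot S_f \subseteq S_g$. The reverse inclusion follows by the symmetric argument using $\bar b^{-1}\pi_N(S_g) = \pi_N(S_f)$, concluding that $b\cdot S_f = S_g$.

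This contradicts the hypothesis on $S_f, S_g$, and finishes the proof. The only subtle point I expect, and the reason the bound $\Ball_B(2n)$ is exactly right rather than $\Ball_B(3n)$, is choosing to compare $s_0^{-1}s$ against $t_0^{-1}t$ (both of norm at most $2n$) rather than comparing $bs$ against $t$ (which could have norm up to $3n$). This reformulation is the one nontrivial idea; everything else is routine bookkeeping with the projection.
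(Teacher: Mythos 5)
Your proof is correct and rests on the same mechanism as the paper's: the map $\pi_N$ realising $\D_{B,\C,\text{inj}}(\Ball_B(2n))$ is injective on differences of elements of $S_f\cup S_g$, which all lie in $\Ball_B(2n)$, and this forces any translation in the quotient to come from a translation in $B$. The paper phrases this contrapositively via a case split on $|S_f|$ versus $|S_g|$ and an enumeration of permutations $\sigma$ with candidate translators $g_{\sigma(i)}f_i^{-1}$, whereas you lift a single hypothetical $\bar b$ directly; this is only a stylistic reformulation of the same argument.
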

        \begin{proof}
             Let $\{x_i\}_{i=1}^{|S_f \cup S_g \cup \{1\}|}$ be an enumeration of the set $S_g \cup S_f \cup \{1\}$. We proceed based on whether $|S_f| = |S_g|$. \newline 
        
            \noindent \textbf{Case 1:} $|S_f| \neq |S_g|$. \newline \noindent From the definition of the function $\D_{B,\C,\text{inj}}$ there exists a subgroup $N \in \mathcal{N}_{\C}(B)$ such that $\text{B}_{N}(n)$ injects into the finite quotient $B/N$ and where $|B/N| = \D_{B,\C,\text{inj}}(\Ball_{B}(n))$. In particular, we have that $|\pi_N(S_f)| \neq |\pi_N(S_g)|$ from which our statement follows. \newline
            
            \noindent \textbf{Case 2:} $|S_f| = |S_g|$. \newline \noindent
            Let $S_f = \{f_1,\cdots,f_k\}$ and $S_g = \{g_1,\cdots,g_k\}.$ Note that $b' \cdot S_f = S_g$ if and only if there exists a permutation $\sigma \in \text{Sym}(k)$ such that $b' = g_{\sigma(i)}f_i^{-1}$ for all $i \in \{1,\cdots k\}$. Thus, our assumptions imply that for every $\sigma \in \text{Sym}(k)$, there are $i,j \in \{1,\cdots,k\}$ such that $g_{\sigma(i)}f_i^{-1} \neq g_{\sigma(j)}f_j^{-1}$. Given that $B$ is residually-$\C$, we choose a subgroup $N \in \mathcal{N}_{\C}(B)$ such that $\pi_B$ restricted to $\text{B}_{B}(2n)$ is injective. Since $\{ g_{\sigma(1)} f_1^{-1}, \cdots, g_{\sigma(k)} f_k^{-1} \} \in \text{B}_{B}(2n)$ for all $\sigma \in \Sym(k)$, we have that $\pi_N$ is injective when restricted to $\{ g_{\sigma(1)} f_1^{-1}, \cdots, g_{\sigma(k)} f_k^{-1} \}$. Therefore, $N$ is the necessary subgroup and $|B/N| = \D_{B,\C,\text{inj}}(\Ball_B(2n)).$ 
        \end{proof}
        
        Let $A$ and $B$ be $\C$-conjugacy separable groups where $A$ is abelian. The next proposition provides a bound for $\D_{A \wr B,\C}(f^{A \wr B},h)$, where $f \in A^B$, in terms of $\C$-conjugacy separability depth function of $B$ and the $\C$-residual girth of $B$.
        \begin{proposition}\label{prop:effective_sep_element_A^B}
        Let  $A$ and $B$ be residually-$\C$ groups where $A$ is abelian. If $f \in A^B$, then $f^{A \wr B}$ is $\C$-separable in $A \wr B$. Moreover, suppose that $A \wr B$ is finitely generated, and suppose that $h \notin f^{B}$ where $\|f\|,\|h\| \leq n$.  If $h \notin A^B$, then
        $$
        \D_{A \wr B,\C}(f^{A \wr B},h) \preceq \Conj_{B,\C}(n).
        $$
        If $h \in A^B$ and $A$ is infinite, then
        $$
        \D_{A \wr B,\C}(f^{A \wr B},h) \preceq (\RG_{B,\C}(n) \cdot \Conj_{A,\C}(\RG_{B,\C}(n) \cdot n))^{(\RG_{B,\C}(n))^3}.
        $$
        Finally, if $h \in A^B$ and $A$ is a finite abelian group, then 
        $$
              \D_{A \wr B,\C}(f^{A \wr B},h) \preceq \RG_{B,\C}(n) \cdot  2^{\RG_{B,\C}( n)}
        $$
        \end{proposition}
        \begin{proof}
            Since $A^B$ is abelian, we have that $f^{A \wr B} = f^B$. If $h \notin A^B$, we have that $\pi_{A^B}(f^{A \wr B}) = \{1\}$. In particular, we have to distinguish $h$ from the identity using subgroups $N \in \mathcal{N}_{\C}(B)$ which we have since $B$ is residually-$\C$. In particular, we have the first statement.
            
            Now suppose that $h \in A^B$. If there exists an element $b \in B$ such $b f b^{-1} = h$, then we would have that $b^{-1} \supp(f) = \supp(h)$. Suppose first that there exists no such element. Lemma \ref{separate_support_effective} implies there exists a subgroup $N \in \mathcal{N}_{\C}(B)$ such that there exists no element $x \in B$ such that $\pi_N(x^{-1} \supp(h)) = \pi_N(\supp(f))$ and where $|B/N| = \D_{B,\C,\text{inj}}(2n).$ Letting $\rho:A \wr B \to A\wr (B/N)$ be the natural extension given by Lemma \ref{lemma:map_extension}, it is easy to see that $\supp(\pi(f)) = \pi_B(\supp(f))$ and $\supp(\pi(h)) = \pi_B(\supp(h))$. Thus, there exists no element $\pi_B(b) \in B/N$ such that $\pi_B(b) \: \pi(f) \: \pi_B(b)^{-1} = \pi_B(h)$. If $A$ is infinite, then Proposition \ref{prop:BinC_effective} implies that $H = A \wr (B / N)$ is $\C$-conjugacy separable. Hence, $\pi(f)^{A \wr (B/N)}$ is $\C$-closed in $A \wr (B/N)$ where $\pi(f) \in A^{B/N}$. By following the proof of Case 1 of Proposition \ref{prop:BinC_effective}, we have that
            $$
            \D_{A \wr (B/N), \C}(\pi(f)^{A \wr (B/N)}, \pi(h)) \leq (\RG_{B,\C}(n) \cdot  \Conj_{A,\C}(3\RG_{B,\C}(n) \cdot n))^{(\RG_{B,\C}(n))^3}.
            $$
            If $A$ is a finite abelian group, then $A \wr (B / N) \in \C$. In particular, 
            $$
            \D_{A \wr B}(f^{A \wr B},h) \leq |B/N| \cdot (|A|)^{|B/N|} \leq (\RG_{B,\C}(n)) \cdot (|A|)^{\RG_{B,\C}(n)}.
            $$
            
            Now suppose that there exists an element $b \in B$ such that $b^{-1} \supp(f) = \supp(h)$. Since, $b f b^{-1} \neq h$ which means that there is an element $b^\prime \in B$ such that $f(b b^\prime) \neq g(b^{\prime})$. Let 
    $$
    S_B = \{ b_f \: b_h^{-1} \: | \: b_f \in \supp(f) \cup \{1\}, b_h \in \supp(h) \cup \{1\}\}.
    $$
    
    By an application of \cite[Theorem 3.4]{davis_olshanskii}, we have for $x \in S_B$ that $\|x\| \leq C_1 \: n$ for some constant $C_1 > 0$. Thus, there exists a subgroup $N_B \in N_{\C}(B)$ such that $S_B \cap N_B = \{1\}$ and  where $|B/N_B| = \D_{B,\C,\text{inj}}(\Ball_B(2\: C_1 \: n))$. Therefore, if $\pi_B(b_1) = \pi_B(b_2)$ for some $b_1,b_2 \in \supp(f) \cup \supp(h) \cup \{1\}$, then $b_1 = b_2$. For the extension $\pi: A \wr B \to A \wr (B/N_B)$, it can be easily seen that $\pi(f) \neq \pi(h)$.
    
    Now suppose that there exists an element $x \in B$ such that 
    $$
    \pi_{N_B}(x \pi(f) x^{-1}) = \pi_{N_B}(\pi(g)).
    $$ 
    We must have that 
    $$
    \pi(f)(b N_B \: x N_B) = \pi(f)(b x N_B) = \pi(g)(x N_B)
    $$
    for all $x \in B$. From the construction of $\pi_{N_B}$, we see that $b  x N_B = x N_B$. That implies $x \in N_B$, and thus, $\pi_{N_B}(x) = 1$ in $B/N_B$. Therefore, we have that $\pi(f) = \pi(h)$ which is a contradiction. Hence, $\pi(f)$ and $\pi(h)$ are not conjugate in $A \wr (B/N_B)$. Since $\pi(f), \pi(h) \in A^{B/N_B}$, we have by the above argument that if $A$ is an infinite abelian group, then there exists a subgroup $M \in \mathcal{N}_{\C}(A \wr (B/N_B))$ such that $\pi(h) \notin \pi_M(f)^{A \wr (B/N_B)} \text{ mod } M$ and where
    $$
    |A \wr (B/N_B)| \leq (\RG_{B,\C}(n) \cdot  \Conj_{A,\C}(3\RG_{B,\C}(n) \cdot n))^{(\RG_{B,\C}(n))^3}.
    $$
    If $A$ is a finite abelian group, we then note that $A \wr (B/N_B) \in \C$. Thus, 
    $$
    \D_{A \wr B, \C}(f^{A \wr B}, h) \leq  \RG_{B,\C}(2 \: C_1 \: n) \cdot  (|A|)^{\RG_{B,\C}(2 \: C_1 \: n)} 
    $$
    as desired.
\end{proof}
        
\subsubsection{Separating conjugacy classes of elements $b \in B$} $\:$ \newline  We now relate the $\C$-separability of the conjugacy class $b^{A \wr B}$ with the $\C$-separability of cyclic subgroup $\left<b\right>$ in $B$.
        
       \begin{lemma}\label{lemma:reducing_separability_to_functions_effective}
           Let $A$ be a residually-$\C$ group and $B$ be $\C$-conjugacy separable. If $b \in B$, then $b^{A \wr B}$ is $\C$-separable in $G$ if and only if $b^{A^B}$ is. If $\rho:G \to B$ is the canonical retraction and $x \notin b^{A^B}$, we have that
           $$
           \D_{G,\C}(b^{A^B},x) \leq \text{min}\{\D_{A \wr B,\C}(b^{A \wr B},x), \D_{A \wr B,\C}(\{1\},\rho(x b^{-1})) \}.
           $$
           
           Suppose that $x = kr$ such that $x \notin b^{A \wr B}$. If $r \nsim_B b,$ then 
           $$
           \D_{A \wr B,\C}(b^{A \wr B},x) \leq \D_{B,\C}(b^{B},r).
           $$ 
           
           If there exists an element $c \in B$ such that $b = c r c^{-1}$, then
           $$
           \D_{A \wr B,\C}(b^{A \wr B},x) \leq \D_{A \wr B,\C}(b^{A^B},c k c^{-1} b).
           $$
       \end{lemma} 
       \begin{proof}
            Suppose that $b^{A \wr B}$ is $\C$-separable, and let $x \notin b^{A^B}$. This is equivalent to $b^{A \wr B} \: b^{-1}$ being $\C$-separable. We have that $A^B$ is $\C$-separable in $A \wr B$ because $A^B = \rho^{-1}(1)$ and that $B$ is residually-$\C$ by assumption. Hence, $A^B$ is $\C$-separable. It then follows that $b^{A \wr B} \: b^{-1} \cap A^B$ is $\C$-closed. 
          \newline \:
          \newline
           \textbf{Claim}: $b^{A \wr B} \: b^{-1} \cap A^B = b^{A^B} b^{-1}$. 
           
           Indeed, the inclusion $b^{A^B} b^{-1} \subseteq b^{A \wr B} \: b^{-1}$ is clear. To show the inclusion in the opposite direction, we write
           \begin{eqnarray*}
               b^{A \wr B} b^{-1} \cap A^B &=& \{ g b g^{-1} b^{-1} \: | \: g \in A \wr B \} \cap A^B \\
                \: &=&\{ k r br^{-1} k^{-1} b^{-1} \: | \: k \in A^B, r \in B\} \cap A^B \\
                \: &=& \{ k ((rbr^{-1}) k^{-1} (rbr^{-1})^{-1}) rbr^{-1} b^{-1} \: | \: k \in A^B, r \in B \} \cap A^B\\
                \: &=& \{k ((rbr^{-1})k^{-1}(rbr^{-1})^{-1}) \: | \: k \in A^B, r \in B\}.
           \end{eqnarray*}
           By setting $k_0 = (brb^{-1}r^{-1})k(brb^{-1}r^{-1})^{-1})^{-1}$, we see that
           $$
           k((rbr^{-1})k^{-1}(rbr^{-1})^{-1}) = k_0 (b k_0 b^{-1}),
           $$
           and thus,
           $$
           b^{A \wr B} b^{-1} \cap A^B \subseteq \{ kbk^{-1}b^{-1} \: | \: k \in A^B\} = b^{A^B} b^{-1}.
           $$
           In particular, we have our claim.
           
           Let $x \notin b^{A^B}$. We then have that is equivalent to $x b^{-1} \notin b^{A^B} b^{-1}.$ By the above claim, we have that either $x b^{-1} \notin A^B$ or $x b^{-1} \notin b^{A \wr B} b^{-1}.$ If $\rho(x b^{-1}) \neq 1$, there exists a subgroup $N_B \in \mathcal{N}_{\C}(B)$ such that $\pi_{N_B}(\rho(x b^{-1})) \neq 1$ in $B/N_B$ and where $|B/N_B| = \D_{A \wr B,\C}(\{1\},\rho(x b^{-1}))$. Therefore, we have that $$\D_{A \wr B,\C}(b^{A^B},x) \leq \D_{A \wr B,\C}(\{1\},\rho(x b^{-1})).$$  Now assume that $x b^{-1} \notin b^{A \wr B} \: b^{-1}$ which is equivalent to $x \notin b^{A \wr B}$. Subsequently, we have that $$\D_{A \wr B,\C}(b^{A^B},x) \leq \D_{A \wr B,\C}(b^{A \wr B},x).$$ Thus, we have
           $$
           \D_{A \wr B,\C}(b^{A^B},x) \leq \text{min}\{\D_{A \wr B,\C}(b^{A^B},x), \D_{A \wr B,\C}(\{1\},\rho(x b^{-1})) \}.
           $$
           
           Now supposed that $b^{A \wr B}$ is $\C$-separable in $A \wr B$, and let $x = kr \in (A 
           \wr B)\backslash b^{A \wr B}$. If $r \nsim_R b,$ we then may retract onto $B$ and use the fact that $B$ is $\C$-conjugacy separable. In particular, we have that
           $$\D_{A \wr B,\C}(b^{A \wr B},g) \leq \D_{B,\C}(b^{B},r).
           $$
           
           Thus, we may assume that $b = c r c^{-1}$. In particular, we may write
           $$
           c g c^{-1} = c k c^{-1} c r c^{-1} = c k c^{-1} b.
           $$
           To simplify notation, we denote $h = c k c^{-1}$. We show that $h b \sim_G b$ if and only if $h b \sim_{A^B} b$. Since the backwards direction is clear, we may assume that $kb = xbx^{-1}$ for some $x \in G$. Let $x = k_0 r_0.$ We have that
           $$
           kb = k_0 r_0 b r_0^{-1} k_0^{-1}
           = k_0((r_0 b r_0^{-1})k_0^{-1}(r_0 b r_0^{-1})^{-1})r_0 b r_0^{-1}.
           $$
           Therefore, $b = r_0 b r_0^{-1}$ which means that $r_0 \in C_G(b).$ Therefore, we may write
           $$
           x r_0^{-1} b r_0 x^{-1} = k_0 b k_0^{-1}.
           $$
           Thus, we may assume that $hb \notin b^{A^B}$. Hence, there exists a subgroup $N \in \mathcal{N}_{\C}(A \wr B)$ such that $\pi_N(hb) \notin \pi_N(b^{A^B})$ and where $|(A \wr B)/N| = \D_{A \wr B,\C}(b^{A^B}, hb)$. We claim that $\pi_N(hb) \notin \pi_N(b^{A \wr B}),$ and for a contradiction, suppose otherwise. There exists an element $g \in A \wr B$ such that $\pi_N(g hb g^{-1}) = \pi_N(hb)$. We must have that $g \notin A^B$. Moreover, we must have that if $g = a d$, then $d \in C_G(b).$ Hence, we have that 
           $$
           ad b d^{-1} a^{-1} = a b a^{-1}.
           $$
           In particular, it follows that
           $$
           \pi_N(g b g^{-1}) = \pi_N(a b a^{-1}) = \pi_N(hb).
           $$
           That implies $\pi_N(hb) \in \pi_N(b^{A^B})$ which is a contradiction. Therefore, we may write 
           $$
           \D_{A \wr B,\C}(b^B,x) \leq \D_{A \wr B,\C}(b^{A^B},hb). 
           $$
       \end{proof}
       
       As the above lemma just demonstrated, the conjugacy class $b^{A \wr B}$ is $\C$-closed if and only if the set $K_b := \{[f,b] \mid f \in A^B\}$ is $\C$-closed. Recall that Lemma \ref{lemma:commutator_subgroups} and Lemma \ref{lemma:commutator_criterion} provide technical tools for working with this set.
       
    We need the following lemma which is essential in understanding how the $\C$-separability of the subgroup $\left<b\right>$ where $b \in B$ is used.
    
    Before we proceed, let us recall Definition \ref{def:cyclic_depth}. Given a group $G$ such that every cyclic subgroup of $G$ is $\C$-separable in $G$, the function $\Cyclic_{G,\C} \colon \mathbb{N} \to \mathbb{N}$ quantifies effective $\C$-separability of cyclic subgroups, i.e. for every pair $b,c \in B_G(n)$ such that $c \not\in \langle b\rangle$ there exists $N \in \NC(G)$ such that $cN \notin \langle bN \rangle$ in $G/B$ and $G/N \leq \Cyclic_{G,\C}(n)$.
\begin{lemma}\label{separation_cosets}
    Let $B$ be a group, and let $b \in B_B(n)$. Let $S = \{s_1,\cdots, s_k\} \subseteq \text{B}_{B}(n)$ be arbitrary. If $\left< b \right>$ is $\C$-separable in $B$, then there is a subgroup $N \in \mathcal{N}_{\C}(B)$ such that for every pair $s_i,s_j \in S$, we have that $\pi_N(s_i \left<b\right>) = \pi_N(s_j \left<b\right>)$ if and only if $s_i \left< b \right> = s_j \left< b \right>.$ Moreover,  we may choose the subgroup $N$ so that 
    $$
    |B/N| \preceq (\Cyclic_{B,\C}(n))^{k^2}.
    $$
\end{lemma}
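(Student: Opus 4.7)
The plan is to separate each ``bad'' pair $(s_i, s_j)$ individually by an application of the hypothesis that $\langle b\rangle$ is $\C$-separable in $B$, and then take a finite intersection of the resulting co-$\C$ subgroups. The key reformulation is that $\pi_N(s_i\langle b\rangle) = \pi_N(s_j \langle b\rangle)$ in $B/N$ if and only if $s_i^{-1} s_j \in \langle b \rangle N$, equivalently $\pi_N(s_i^{-1}s_j) \in \pi_N(\langle b\rangle)$. Thus separating the cosets $s_i\langle b\rangle$ and $s_j\langle b\rangle$ in $B/N$ is the same as separating the element $s_i^{-1}s_j$ from the cyclic subgroup $\langle b\rangle$ in $B/N$.

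First I would fix the collection of pairs $P = \{(i,j) \mid 1 \leq i < j \leq k,\ s_i\langle b\rangle \neq s_j\langle b\rangle\}$, noting that $|P| \leq \binom{k}{2} \leq k^2$. For each $(i,j) \in P$ we have $s_i^{-1}s_j \notin \langle b \rangle$ and $\|s_i^{-1}s_j\| \leq \|s_i\| + \|s_j\| \leq 2n$. Since $\|b\| \leq n \leq 2n$, the definition of $\Cyclic_{B,\C}$ supplies a subgroup $N_{ij} \in \NC(B)$ with $\pi_{N_{ij}}(s_i^{-1}s_j) \notin \pi_{N_{ij}}(\langle b\rangle)$ and
\[
[B:N_{ij}] \leq \Cyclic_{B,\C}(2n).
\]

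Next I would set $N = \bigcap_{(i,j) \in P} N_{ij}$. The natural injection $B/N \hookrightarrow \prod_{(i,j)\in P} B/N_{ij}$ combined with the fact that $\C$ is an extension-closed pseudovariety (hence closed under finite direct products and subgroups) yields $N \in \NC(B)$, and a cardinality count gives
\[
[B:N] \leq \prod_{(i,j)\in P} [B:N_{ij}] \leq \bigl(\Cyclic_{B,\C}(2n)\bigr)^{k^2} \preceq \bigl(\Cyclic_{B,\C}(n)\bigr)^{k^2},
\]
where the last step uses the standard fact that changing the argument by a multiplicative constant does not affect the $\approx$-class.

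Finally, I would verify the separation property: if $s_i\langle b\rangle \neq s_j\langle b\rangle$ then $(i,j) \in P$ (after possibly swapping indices), so $\pi_N(s_i^{-1}s_j) \notin \pi_N(\langle b\rangle)$ because $N \leq N_{ij}$ and $\pi_{N_{ij}}$ already separated them; translating back gives $\pi_N(s_i\langle b\rangle) \neq \pi_N(s_j\langle b\rangle)$. The converse direction is automatic from functoriality of $\pi_N$. There is no real obstacle here; the only mild subtleties are tracking the quadratic count of pairs and recalling that $\NC(B)$ is closed under finite intersections precisely because $\C$ is an extension-closed pseudovariety.
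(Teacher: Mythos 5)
Your proposal is correct and follows essentially the same route as the paper: reduce coset equality to membership of $s_i^{-1}s_j$ in $\langle b\rangle$, separate each of the at most $k^2$ bad differences (each of word length at most $2n$) using $\Cyclic_{B,\C}(2n)$, intersect the resulting co-$\C$ subgroups, and bound the index by the product. The only cosmetic difference is that the paper indexes by the set $D$ of bad differences rather than by pairs $(i,j)$, which changes nothing.
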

\begin{proof}
    The elements $s_i,s_j$ belong to the same left coset of $\left< b_i \right>$ if and only if $s_i^{-1} s_j \in \left<b \right>$. Let $D = \{ s_i^{-1} s_j \: | \: s_i,s_j \in S\} \backslash \left< b \right>$, note that $|D| \leq \binom{k}{2} \leq k^2$. For each $x \in D$, there exists a subgroup $N_x \in \mathcal{N}_{\C}(B)$ such that $\pi_{N_x}(x) \notin \pi_{N_x}(\left< b\right>)$ and where $|B/N_x| = \D_{B,\C}(\left<b\right>,x).$ If we let $N = \cap_{x \in D}N_x$, we have that $\pi_N(x) \notin \pi_N(\left< b \right>)$ for all $x \in D$.  We note that if $x \in D$, then $\|x\| \leq 2n.$ Therefore, we have that
    $$
    |B/N| \leq \prod_{x \in D}|B/N_x| \leq \prod_{m=1}^{\binom{k}{2}} \Cyclic_{B,\C}(2n) \leq (\Cyclic_{B,\C}(2n))^{k^2}.
    $$
\end{proof}

       The following proposition relates the $\C$-separability of the set $b^{A^B}$ in $A \wr B$ to the $\C$-separability of $\left<b\right>$ in $B$.
       
       \begin{proposition}
           Let $A \wr B$ be the wreath product of finitely generated, residually-$\C$ groups where $A$ is abelian, and let $b \in B$ be arbitrary. Then $b^{A^B}$ is $\C$-separable in $G$ if and only if the subgroup $\left< b \right>$ is $\C$-separable in $B$.
           
        Suppose that $f \notin b^{A^B}$ where $\|f\|,\|b\| \leq n$. Letting $\Phi(n) = (\Cyclic_{B,\C}(n))^{n^2}$, we have that if $A$ is an infinite, finitely generated abelian group, then
           $$
           \D_{A \wr B,\C}(b^{A^B},f) \preceq \Max\{ \Conj_{B,\C}(n), (\Phi(n) \cdot \Conj_{A,\C}(\Phi(n) \cdot n))^{(\Phi(n))^3}\}.$$ Otherwise, if $A$ is a finite abelian group, then
           $$
           \D_{A \wr B}(b^{A^B},f) \preceq \Max\{\Conj_{B,\C}(n), \Phi(n) \cdot  2^{\Phi(n)}\}.
           $$
       \end{proposition}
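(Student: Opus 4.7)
The backbone of the argument is the identification $b^{A^B} = K_b \cdot b$ with $K_b = [A^B, \langle b \rangle]$ from Lemma \ref{lemma:commutator_subgroups}, together with the commutator detection criterion in Lemma \ref{lemma:commutator_criterion}: $k \in K_b$ if and only if $\tilde{k}(b, x) := \prod_{i \in \mathbb{Z}} k(b^i x) = 1$ for every $x \in B$. This reduces the entire statement to understanding how $\langle b \rangle$-orbits in $B$ are detected in $\C$-quotients.

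For the forward direction, I will assume $\langle b \rangle$ is not $\C$-separable in $B$, so there exists $y \in B \setminus \langle b \rangle$ with $\pi_N(y) \in \pi_N(\langle b \rangle)$ for every $N \in \mathcal{N}_{\C}(B)$. Fix $a \in A \setminus \{1\}$ and let $f \in A^B$ take the value $a$ at $1$, $a^{-1}$ at $y$, and $1$ elsewhere; then $\tilde{f}(b, 1) = a \neq 1$, so $fb \notin b^{A^B}$. If some $M \in \mathcal{N}_{\C}(A \wr B)$ separated $fb$ from $b^{A^B}$, then $N := M \cap B$ would lie in $\mathcal{N}_{\C}(B)$, forcing $y = b^i m$ with $m \in N \subseteq M$. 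A short commutator identity then exhibits $f$ as congruent modulo $M$ to $[f_0, b^i] \in K_b$, where $f_0(1) = a$ and $f_0 \equiv 1$ elsewhere; the error term equals $-[f_y, y m^{-1} y^{-1}]$ with $f_y$ the indicator-at-$y$ function, and this commutator lies in $[A \wr B, M] \subseteq M$ by normality. This places $\pi_M(f) \in \pi_M(K_b)$, a contradiction.

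For the backward direction, write $f = kr$ with $k \in A^B, r \in B$, $\|f\|, \|b\| \leq n$. If $r \neq b$, then $\rho(b^{A^B}) = \{b\}$ omits $r$, and retracting via $\rho \colon A \wr B \to B$ combined with $\C$-conjugacy separability of $B$ gives a separating quotient of size at most $\Conj_{B, \C}(n)$. Otherwise $r = b$ and $k \notin K_b$, so Lemma \ref{lemma:commutator_criterion} produces $x \in B$ with $\prod_i k(b^i x) \neq 1$; reindexing the $\langle b \rangle$-orbit we may take $x \in \supp(k)$, so $|\supp(k)|, \|x\| \leq n$. Applying Lemma \ref{separation_cosets} (or its left-coset analogue) to $S = \supp(k)$ produces $N_0 \in \mathcal{N}_{\C}(B)$ with $|B/N_0| \preceq \Phi(n)$ under which $\pi_{N_0}$ preserves (in)equality of $\langle b \rangle$-cosets among elements of $S$.

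Letting $\pi \colon A \wr B \to A \wr (B/N_0)$ be the extension from Lemma \ref{lemma:map_extension}, the coset-preservation property gives
\[
\sum_i \pi(k)(\pi(b)^i \pi(x)) = \sum_{z \in \supp(k) \cap \langle b \rangle x N_0} k(z) = \sum_{z \in \supp(k) \cap \langle b \rangle x} k(z) \neq 0
\]
in additive notation, so $\pi(k) \notin K_{\pi(b)}$; a routine centraliser computation comparing $B/N_0$-components in any equation $g\pi(b)g^{-1} = \pi(kb)$ forces the $B/N_0$-part of $g$ to centralise $\pi(b)$ and thereby $\pi(k) \in K_{\pi(b)}$, upgrading the conclusion to $\pi(kb) \notin \pi(b)^{A \wr (B/N_0)}$. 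Since $B/N_0 \in \C$ is finite, Proposition \ref{prop:BinC_effective} applied to $A \wr (B/N_0)$ delivers, when $A$ is infinite, a separating quotient of size at most $(\Phi(n) \cdot \Conj_{A, \C}(\Phi(n) \cdot n))^{\Phi(n)^3}$; when $A$ is finite, $A \wr (B/N_0)$ is itself in $\C$ with size $\preceq \Phi(n) \cdot 2^{\Phi(n)}$. Pulling back along $\pi$ and taking the maximum with $\Conj_{B, \C}(n)$ yields the stated bounds. The main obstacle is the coset-preservation step: making sure that passing to the finite quotient $B/N_0$ does not collapse the orbit sum witnessing $k \notin K_b$; this is precisely the job Lemma \ref{separation_cosets} is designed for, and everything else is routine bookkeeping of the depth estimates.
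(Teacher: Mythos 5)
Your proposal is correct and follows essentially the same route as the paper's proof: both directions hinge on the identification $b^{A^B}=K_b\,b$ with $K_b=[A^B,\langle b\rangle]$ (Lemmas \ref{lemma:commutator_subgroups} and \ref{lemma:commutator_criterion}), the non-separability direction uses the same witness supported on $\{1,y\}$ with $y$ in the pro-$\C$ closure of $\langle b\rangle$, and the quantitative direction uses the same chain of Lemma \ref{separation_cosets}, the extension from Lemma \ref{lemma:map_extension}, and Proposition \ref{prop:BinC_effective} to land on the stated bounds. The only cosmetic difference is that you verify $\pi(kb)\notin\pi(b)^{A\wr(B/N_0)}$ directly by the centraliser computation, where the paper routes this step through Lemma \ref{lemma:reducing_separability_to_functions_effective}.
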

       \begin{proof}
           Clearly, $b^{A^B}$ is $\C$-closed if and only if $K_b = \{fbf^{-1}b^{-1} \: | \: f \in A^B \} = b^{A^B} b^{-1}$ is $\C$-closed.
           
           Suppose that $\left< b \right>$ is $\C$-separable in $B$ and that $f \notin b^{A^B}$. If $f \in A^{B}$, then by letting $\varphi \colon A \wr B \to B$ be the natural retraction, we have that $\varphi(f) = 1$ and $\varphi(b) \neq 1$. In particular, we have that $\varphi(f) \notin \varphi(b^{A^B})$ and
           $$
           \D_{A \wr B, \C}(b^{A^B},f) \leq \Conj_{B, \C}(n).
           $$
           Thus, we may assume that $f \notin A^B$. Assuming that $f \in B$, it is straightforward to see that $\varphi(f) \notin \varphi(b^{A^B})$ as before since $\varphi(b^{A^B}) = \{b\}$. Thus, 
           $$
           \D_{A \wr B, \C}(b^{A^B}, f) \leq \Conj_{B,\C}(n).
           $$

           Hence, we may assume that $f \notin A^B$ and $f \notin B$. In particular, we have that $g = fb^{-1} \notin K_b$. Following Lemmas \ref{lemma:commutator_subgroups} and \ref{lemma:commutator_criterion}, we see that there is an element $t \in B$ such that $\tilde{g}(b,t) = a \neq 1$.  Since $\|g\| \leq n$, we have by \cite[Theorem 3.4]{davis_olshanskii} that $|\supp(g)| \leq n$. As $B$ is residually-$\C$ and $\left<b \right> t$ is $\C$-separable, we have by Lemma \ref{separation_cosets} that there is a subgroup $N_B \in \mathcal{N}_{\C}(B)$ such that $\pi_{N_B}$ is injective on $\supp(g)$, $\pi_{N_B}(\left< b \right> t_1) \neq \pi_{N_B}(\left< b \right> t_2)$ for $t_1,t_2 \in \supp(g)$, and $|B/N_B| \leq (\Cyclic_{B,\C}(n))^{n^2}$. Letting $\pi: A \wr B \to A \wr (B/N_B)$ be the natural extension given by Lemma \ref{lemma:map_extension}, we have that $\widetilde{\pi(f)}(b,t) = a$. Lemma \ref{lemma:commutator_criterion} implies that
           $$
           \pi(g) \notin [A^{B/N_B},\pi(\left< b \right>)] = \{[f,\pi(b)] \: | \: f \in A^{B/N_B} \} = \pi(K_b).
           $$
           That implies $\bar{f} \notin \bar{b}^{A^{B/N_B}}$. By Lemma \ref{lemma:reducing_separability_to_functions_effective}, we have that
           $$
           \D_{A \wr (B / N_B),\C}(\bar{b}^{A^{B/N_B}},\bar{f}) \leq \text{min} \{\D_{A \wr (B/N_B),\C}(\bar{b}^{A \wr (B/N_B)}),\bar{f}), \D_{A\wr(B/B_N),\C}(\{1\}, \rho(\bar{f} \bar{b}^{-1})) \}
           $$
           where $\rho: A \wr(B/N_B) \to B/N_B$ is the natural retraction.
           
           When $A$ is a finite abelian group, the above inequalities become
           $$
           \D_{A \wr B,\C}(b^{A^B},f) \leq \Max\{ \Conj_{B,\C}(n), \Phi(n) \cdot (|A|)^{\Phi(n)}\}.
           $$
           
           Therefore, we may assume that $A$ is infinite. If $\rho(\bar{f} \bar{b}^{-1}) \neq 1$, we have that
           $$
           \D_{A \wr (B/B_N)}(\bar{b}) \leq |B/N_B|\leq (\Cyclic_{B,\C}(n))^{n^2}.
           $$
           If $\rho(\bar{f} \bar{b}^{-1}) = 1$, we have by following the argument in Case 4 of the proof of Proposition \ref{prop:BinC_effective} that
           $$
           \D_{A \wr (B/N_B),\C}(\pi(b)^{A \wr (B/N_B)}),\bar{f}) \leq  (\Phi(n) \cdot \Conj_{A,\C}(\Phi(n) \cdot n))^{(\Phi(n))^{3}}.
           $$
           In either case,  we may write
           $$
           D_{A \wr B, \C}(b^{A^B},f) \leq \Max\{ \Conj_{B,\C}(n), (\Phi(n) \cdot \Conj_{A,\C}(\Phi(n) \cdot n))^{(\Phi(n))^{3}}\}.
           $$

            Now, suppose that the subgroup $\langle b \rangle$ is not $\C$-closed in $B$. Pick some  element $b'$ in the $\proC$ closure of $\langle b \rangle$ in $B$, i.e. $b'N_B \subseteq \langle b\rangle N_B$ for every subgroup $N_B\in \NC(B)$. Define a function $g \colon B \to A$ as
    \begin{displaymath}
        g(x) =  \begin{cases}
                    a &\mbox{ if $x = 1$}\\
                    1 &\mbox{otherwise}
                \end{cases}
    \end{displaymath}
    where $a \in A \setminus \{1\}$, and set $h = [g,c]$. Clearly,
    \begin{displaymath}
        h(x) = g(x)g(cx)^{-1} = \begin{cases}
                                    a & \mbox{if $x=1$}\\
                                    a^{-1} &\mbox{if $x = c^{-1}$}\\
                                    1 &\mbox{otherwise}
                                \end{cases};
    \end{displaymath}
    hence, using Lemma \ref{lemma:commutator_criterion}, we see that $h \not \in K_b$. We will show that $h$ is in the $\proC$ closure of $K_b$ in $G$. Let $N$ be an arbitrary co-$\C$-subgroup. Following Lemma \ref{lemma:factoring_through_base}, the map $\pi \colon A \wr B\to (A \wr B)/N$ factors through some $A \wr (B/N_B)$ where $N_B = N \cap B \in \NC(B)$. By assumption, $\pi(c) = \pi(b)^k$ for some $k \in \mathbb{Z}$, so
    \begin{displaymath}
        \pi(h) = [\pi(g),\pi(c)] = [\pi(g),\pi(b)^k] \in \{[\pi(f), \pi(b)]|f \in A^B\} = \pi(K_b).
    \end{displaymath}
    Thus, we see that the subgroup $K_b$ is not $\C$-separable in $G$. Consequently, we see that the conjugacy class $b^{A^B}$ is not $\C$-separable in $G$. 
       \end{proof}
       
       We have the following immediate corollary which gives an upper bound for the quantification of the $\C$-separability of the set $b^{A \wr B}$ in $A \wr B$ in terms of the quantification of $\C$-separability of the sets $b^B$ and $\left<b\right>$ in $B$.
       \begin{corollary}\label{cor:effective_conj_b_in_B}
           Let $A \wr B$ be a wreath product of residually-$\C$, finitely generated groups where $A$ is abelian, and let $b \in B$ be arbitrary. Then the conjugacy class $b^{A \wr B}$ is $\C$-separable in $G$ if and only if both the conjugacy class $b^B$ and the subgroup $\left<b \right>$ are $\C$-closed in $B$. 
           
           Suppose  that $\|b\|,\|f\| \leq n$ for some element $f \notin b^{A \wr B}$ such that $f = kr$. If $r \nsim_B b,$ then we have that
           $$
           \D_{A \wr B,\C}(b^{A \wr B},f) \preceq \Conj_{B,\C}(n).
           $$
           Suppose that there exist an element $c \in B$ such that $b = c r c^{-1}$. Let $\Psi(n) = \Short_{B}(n) + n$ and $\Phi(n) = (\Cyclic_{B,\C}(\Psi(n)))^{\Psi(n)^2}.$ If $A$ is infinite, then
           $$
           \D_{A \wr B,\C}(b^{A \wr B},f) \preceq (\Phi(n) \cdot \Conj_{A,\C}(n))^{(\Phi(n))^3}.
           $$
           If $A$ is a finite abelian group, we have that
           $$
           \D_{A \wr B,\C}(b^{A \wr B},f) \preceq \Phi(n) \cdot 2^{\Phi(n)}.
           $$
       \end{corollary}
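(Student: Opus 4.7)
The plan is to derive Corollary \ref{cor:effective_conj_b_in_B} directly by combining Lemma \ref{lemma:reducing_separability_to_functions_effective} with the proposition immediately preceding the corollary, which characterises $\C$-separability of $b^{A^B}$ in $A\wr B$ in terms of $\C$-separability of $\langle b\rangle$ in $B$ and quantifies this correspondence.

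For the qualitative ``if and only if'', I would argue as follows. If $b^{A\wr B}$ is $\C$-closed in $A\wr B$, then for any $r\in B\setminus b^B$, a co-$\C$ subgroup $N\normleq A\wr B$ separating $r$ from $b^{A\wr B}$ restricts to the co-$\C$ subgroup $N\cap B\normleq B$ (using closure of $\C$ under subgroups) and this restriction separates $r$ from $b^B$; hence $b^B$ is $\C$-closed in $B$. Simultaneously, Lemma \ref{lemma:reducing_separability_to_functions_effective} gives that $b^{A^B}$ is $\C$-closed, and the preceding proposition then yields that $\langle b\rangle$ is $\C$-closed in $B$. Conversely, assuming both $b^B$ and $\langle b\rangle$ are $\C$-closed in $B$, the quantitative bounds below establish $\C$-separability of $b^{A\wr B}$.

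For the effective bounds, write $f = kr$ with $k\in A^B$ and $r\in B$, and note that $\|r\|_S\leq \|f\|_S\leq n$. In Case 1, where $r\not\sim_B b$, Lemma \ref{lemma:reducing_separability_to_functions_effective} gives
\[
\D_{A\wr B,\C}(b^{A\wr B},f)\leq \D_{B,\C}(b^B, r)\leq \Conj_{B,\C}(n).
\]
In Case 2, where $b = crc^{-1}$ for some $c\in B$, I choose $c$ of minimal word length, so that $\|c\|_S\leq \Short_B(n)$. Lemma \ref{lemma:reducing_separability_to_functions_effective} then yields
\[
\D_{A\wr B,\C}(b^{A\wr B}, f)\leq \D_{A\wr B,\C}(b^{A^B}, ckc^{-1}b),
\]
and the triangle inequality bounds $\|ckc^{-1}b\|_S$ by a constant multiple of $\Psi(n) = \Short_B(n) + n$. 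Applying the preceding proposition with $\Psi(n)$ in place of $n$, and observing that the corollary's $\Phi(n) = (\Cyclic_{B,\C}(\Psi(n)))^{\Psi(n)^2}$ already absorbs this rescaling, produces the stated bounds in both the infinite-$A$ and finite-$A$ subcases.

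The main obstacle is the bookkeeping in Case 2: one must verify that $ckc^{-1}b$ really lies outside $b^{A^B}$ (which follows from $f\notin b^{A\wr B}$ together with $c\in B\subseteq A\wr B$), and then track the word-length inflation from $n$ to $\Psi(n)$ through the nested estimates supplied by the preceding proposition without losing the asymptotic form of the final bound.
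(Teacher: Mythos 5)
Your proposal is correct and follows exactly the route the paper intends: the corollary is presented there as an immediate consequence of Lemma \ref{lemma:reducing_separability_to_functions_effective} together with the proposition preceding it, which is precisely the assembly you carry out (reduction to $b^{A^B}$ via the conjugate $ckc^{-1}b = cfc^{-1}$, with $\|c\|\leq\Short_B(n)$ feeding the rescaling from $n$ to $\Psi(n)$). The only point you do not flag is that the stated Case-2 bound in the corollary drops the $\Conj_{B,\C}$ term and the inflation inside $\Conj_{A,\C}(\cdot)$ that the proposition's bound carries, but that discrepancy is in the paper's statement itself, not in your argument.
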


       \subsubsection{Separating conjugacy classes of elements $w = fb \in A \wr B$ where $f \in A^B \backslash \{1\}$ and $b \in B \backslash \{1\}$} $\:$ 
       
       Recall that one of the assumption of Lemma \ref{lemma:commute_support} is that the elements of the $\supp(f)$ lie in distinct cosets of $\langle b \rangle$. The following lemma shows that this assumption is quite natural and does not cause any loss of generality.
       \begin{lemma}\label{lemma:fb_fsupport_distinct_cosets_b_effective}
           Let $G = A \wr B$ with $A$ abelian, and let $w = fb \in A \wr B$ where $f \in A^B \backslash \{1\}$ and $b \in B \backslash \{1\},$ be arbitrary. Then there exists an element $w^\prime = f^\prime b \in w^{A^B}$ where $f^\prime \in A^B$ such that the elements of $\supp(f^\prime)$ lie in different cosets of $\left<b\right>$ in $B$, i.e. if $\supp(w^\prime) = \{s_1,\cdots,s_n\} \subseteq B,$ then $\left<b \right> s_i \neq \left<b \right> s_j$ whenever $s_i \neq s_j.$ In particular, if $\|w\| \leq n$, then $\|w^\prime\| \leq Cn$ for some constant $C>0$
       \end{lemma}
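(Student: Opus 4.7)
The plan is to produce the required element $w'$ as an explicit conjugate $g w g^{-1}$ for a suitable $g \in A^B$. Using abelianness of $A$ and the semidirect-product formula, conjugation by $g \in A^B$ sends $fb$ to $(g \cdot f \cdot {}^b g^{-1}) \, b$, so the new base function satisfies
\[
f'(x) \;=\; g(x) \, f(x) \, g(b^{-1}x)^{-1}.
\]
Crucially, this identity decouples across cosets of $\left<b\right>$: the values of $f'$ on the coset $\left<b\right> t$ depend only on the restrictions of $f$ and $g$ to that coset. I can therefore design $g$ one coset at a time.

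Fix a coset $C = \left<b\right> t$ meeting $\supp(f)$ and enumerate the support points of $f$ in $C$ as $b^{i_1}t, \dots, b^{i_k}t$ with $i_1 < \cdots < i_k$. My goal is to concentrate all of the $f$-values in $C$ at the earliest point $b^{i_1}t$. Requiring $f'(b^i t) = 1$ for $i \neq i_1$ together with the boundary choice $g \equiv 1$ outside the range $[i_1, i_k - 1]$ gives a one-step recurrence with the explicit solution
\[
g(b^i t) \;=\; \prod_{l \,:\, i_l > i} f(b^{i_l}t) \qquad (i_1 \leq i < i_k),
\]
which is automatically finitely supported on $C$ and produces $f'(b^{i_1}t) = \tilde f(b,t) = \prod_l f(b^{i_l}t)$ with $f'$ vanishing elsewhere on $C$. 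Patching these local choices over the finitely many cosets meeting $\supp(f)$ (and setting $g \equiv 1$ on the remaining cosets) gives a well-defined element of $A^B$; the resulting $w' = g w g^{-1}$ has support consisting of at most one element per coset of $\left<b\right>$, namely one of the original support points of $f$, which establishes the first assertion.

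For the word-length bound, the construction gives $\supp(f') \subseteq \supp(f)$, and each nonzero value $f'(b^{i_1}t) = \tilde f(b,t)$ is a product of $f$-values at points already in $\supp(f)$ lying in the coset $C$. Since $A$ is abelian, the triangle inequality for any word metric on $A$ yields
\[
\sum_{x \in \supp(f')} \|f'(x)\|_A \;\leq\; \sum_{x \in \supp(f)} \|f(x)\|_A.
\]
Invoking the standard travelling-salesman description of word length in a wreath product (used elsewhere in this paper via \cite{davis_olshanskii}), $\|w'\|$ is controlled by the cost of a tour through $\supp(f') \cup \{1, b\}$ plus the total $A$-cost of the values of $f'$. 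Both quantities are bounded above by their counterparts for $w$, which gives $\|w'\| \leq C \|w\|$ for a constant $C$ depending only on the chosen generating set. The main subtlety is exactly this last step: accumulating $A$-values at a single representative could in principle inflate the local $A$-cost, but the abelianness of $A$ together with the triangle inequality keeps the total cost under control.
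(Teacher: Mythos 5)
Your proof is correct, and its core mechanism is the same as the paper's: conjugate $fb$ by an element of $A^B$ supported along segments of $\langle b\rangle$-cosets so that the values of $f$ within each coset telescope onto a single representative. The differences are in execution. The paper argues by induction on $|\supp(f)|$, merging one pair of same-coset support points at a time via a conjugator that is constant on the segment between them; you instead write down a closed-form conjugator for each coset that collapses everything onto the earliest support point in one step (your formula checks out: $f'(b^it)=g(b^it)\,f(b^it)\,g(b^{i-1}t)^{-1}$ telescopes to $1$ for $i\neq i_1$ and to $\prod_{l} f(b^{i_l}t)$ at $i=i_1$, and the boundary conditions also work when $b$ has finite order). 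More significantly, your length estimate is sounder than the paper's. The paper bounds $\|f'\|\leq 2\|h\|+2\|b\|+\|f\|$ and feeds this through the induction, so the implied constant is multiplied by $5$ at every step; since the number of merging steps can grow with $n$, this does not literally produce a uniform constant $C$ (and the intermediate estimate on $\|h\|$ is itself delicate, since $h$ is constant on a possibly long segment of a coset). Your argument avoids both issues by estimating $f'$ directly: $\supp(f')\subseteq\supp(f)$ and the total $A$-cost of the values does not increase, so the travelling-salesman description of word length in $A\wr B$ gives $\|w'\|\leq C\|w\|$ with $C$ depending only on the chosen generating sets. The one hypothesis you use implicitly is that $A$ and $B$ are finitely generated so that this description applies, which is the same standing assumption the paper makes whenever it invokes the Davis--Olshanskii estimates.
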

       \begin{proof}
           We proceed by induction on $|\supp(f)|$, and note that if $|\supp(f)| = 0$, then the statement is clear. Thus, we may assume that our statement holds for all $f^\prime \in A^B$ with $|\supp(f^\prime)| < k$. Let $\supp(f) = \{s_1,\cdots, s_k\}$, and suppose that $\left<b\right> s_i = \left< b \right> s_j$ for some $i,j \in \{1,\cdots,k \}$. That means that $s_j = b^m s_i$ for some $m \in \mathbb{Z}$, and without loss of generality, we may assume that $m$ is positive. Define a function $h \in A^B$ in the following way:
  \begin{displaymath}
        h(x) =  \begin{cases}
                    f(s_i)^{-1} &\mbox{if } x = b^l s_i \mbox { for $0 \leq l < n$},\\
                    1           &\mbox{otherwise}
                \end{cases}.
    \end{displaymath}
    As $A^B$ is abelian, we see that $h fb h^{-1} = h bh^{-1}b^{-1} fb$. Denote $f' = h bh^{-1}b^{-1} f$. A quick inspection verifies that
    \begin{displaymath}
        f'(x) = \begin{cases}
                    f(s_j)f(s_i)^{-1} &\mbox{ if $x = s_j$},\\
                    1 & \mbox{ if $x = s_i$},\\
                    f(x) &\mbox{ otherwise }.
                \end{cases}
    \end{displaymath}
    Clearly, $|\supp(f')| < |\supp(f)| = k$, so by the induction hypothesis, we see that there is an element $w' = f''b \in (f'b)^{A^B}$ such that all elements of $\supp(f'')$ lie in different cosets of $\langle b\rangle$. Since $(fb)^{A^B} = (f'b)^{A^B}$, we are done. We have that $\|f^\prime\| \leq 2\|h\| + 2 \|b\| + \|f\| \leq 5n.$ By induction, there exists a constant $C_1 > 0$ such that $\|f^{"}b\| \leq C_1 \|f^\prime b\| \leq 5 C_1 n.$ Thus, we finish by setting $C_2 = 5C_1.$
       \end{proof}

  This final proposition quantifies the complexity of separating the conjugacy class $(fb)^{A \wr B}$ where $f \in A^B \backslash \{1\}$ and $b \in B \backslash \{1\}.$
  
  Before we proceed, let us recall Definition \ref{def:short_conjugator}, given a group $G$, the function $\Short_G \colon \mathbb{N} \to \mathbb{N}$ quantifies the length of the shortest conjugator between conjugate elements, i.e., given $n$, for every $f,g \in B_G(n)$ such that $f \sim_G g$ there is $c \in B$ such that $f = cgc^{-1}$ and $\|c\| \leq \Short(n)$.

    \begin{proposition}\label{prop:f_b_nontrivial_conj_sep}
       Let $fb \in A \wr B$ where $f \in A^B \backslash \{1\}$ and $b \in B \backslash \{1\}$. Then the conjugacy class $(fb)^{A \wr B}$ is $\C$-closed in $A \wr B$. Suppose that $x \in (A \wr B) \backslash (fb)^{A \wr B}$ where $\|x\|,\|fb\| \leq n.$ Let $\Psi(n) = \Short_G(n) + n$, and let 
       $$
       \Phi(n) = \RG_{B,\C}(\Psi(n)) \cdot (\Cyclic_{B,\C}(\Psi(n)))^{\Psi(n)^2}.
       $$
        If $A$ is an infinite abelian group, then
       $$
       \D_{A \wr B}((fb)^{A \wr B},x) \preceq \Max\{\Conj_{B,\C}(n), (\Phi(n) \cdot \Conj_{A,\C}(\Phi(n) \cdot n))^{(\Phi(n))^3}\}.
       $$
       If $A$ is a finite abelian group, then
       $$
       \D_{G}((fb)^{A \wr B},x) \preceq \Max\{\Conj_{B,\C}(n),\Phi(n) \cdot 2^{\Phi(n)}\}.
       $$
       \end{proposition}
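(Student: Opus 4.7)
The plan is to reduce the separation problem to an instance of Proposition \ref{prop:BinC_effective} inside a quotient wreath product whose acting group lies in $\C$. Write $x = gc$ with $g \in A^B$ and $c \in B$, and apply the canonical retraction $\rho \colon A \wr B \to B$. If $c \not\sim_B b$, then $\rho(x) \notin \rho((fb)^{A \wr B}) = b^B$, and $\C$-conjugacy separability of $B$ separates them already in $B$ via a quotient of size at most $\Conj_{B,\C}(n)$, producing the first term of the maximum. Otherwise, fix a shortest element $c_0 \in B$ with $c_0^{-1} b c_0 = c$, so $\|c_0\| \leq \Short_B(n)$, and replace $x$ by $c_0 x c_0^{-1}$. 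This does not affect membership in $(fb)^{A \wr B}$ and leaves an element of the form $gb$ with length $\preceq \Psi(n)$.

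Now $fb$ and $gb$ share the same $B$-component. Applying Lemma \ref{lemma:fb_fsupport_distinct_cosets_b_effective} to each, I further replace them by $A^B$-conjugates whose supports lie in pairwise distinct right cosets of $\langle b\rangle$, at only a bounded multiplicative cost in length. Since $fb \not\sim_{A\wr B} gb$ by hypothesis, Lemma \ref{lemma:commute_support} implies that no $c' \in C_B(b)$ satisfies $c' f (c')^{-1} = g$. The goal then becomes to find $N \in \NC(B)$ of index controlled by $\Phi(n)$ such that $\overline{fb}$ and $\overline{gb}$ remain non-conjugate in $A \wr (B/N)$.

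I would build $N$ as the intersection of two witnessing subgroups. The first, given by the $\C$-residual girth, makes $\pi_N$ injective on $\supp(f) \cup \supp(g) \cup \{1, b\}$, and contributes a factor $\RG_{B,\C}(\Psi(n))$. The second, supplied by Lemma \ref{separation_cosets} applied to the support elements, ensures that $\pi_N$ separates the distinct right cosets of $\langle b\rangle$ meeting $\supp(f)\cup\supp(g)$; since there are at most $\Psi(n)$ such elements by \cite[Theorem 3.4]{davis_olshanskii}, this contributes the factor $(\Cyclic_{B,\C}(\Psi(n)))^{\Psi(n)^2}$. The intersection satisfies $[B:N] \leq \Phi(n)$, and in $A \wr (B/N)$ the images $\bar f, \bar g$ still have supports in distinct cosets of $\langle \bar b\rangle$. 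With $B/N \in \C$, Proposition \ref{prop:BinC_effective} applied to $A \wr (B/N)$ with the ambient word length $\preceq \Psi(n)\cdot n$ then yields the bound $\Phi(n)\cdot(\Conj_{A,\C}(\Phi(n)\cdot n))^{\Phi(n)^3}$ for infinite $A$ and $\Phi(n)\cdot 2^{\Phi(n)}$ for finite $A$, which merges with the Case~1 bound via the maximum.

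The main technical obstacle is justifying that $\overline{fb}$ and $\overline{gb}$ really remain non-conjugate in $A \wr (B/N)$: one must argue that any hypothetical $\bar c \in C_{B/N}(\bar b)$ conjugating $\bar f$ to $\bar g$ in $A^{B/N}$ would, via the injectivity of $\pi_N$ on supports and the coset-separation property built into $N$, lift to an element $c' \in B$ with $c' \in C_B(b)$ and $c' f (c')^{-1} = g$, contradicting the hypothesis supplied by Lemma \ref{lemma:commute_support}. This lifting argument is precisely what forces both the residual-girth factor and the cyclic-separability factor to appear simultaneously in $\Phi(n)$, and is where the proof genuinely uses the two quantitative hypotheses on $B$ at once.
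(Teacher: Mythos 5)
Your overall strategy matches the paper's: reduce to a common $B$-component via a shortest conjugator, normalise both supports into distinct cosets of $\langle b\rangle$ using Lemma \ref{lemma:fb_fsupport_distinct_cosets_b_effective}, invoke Lemma \ref{lemma:commute_support} to translate non-conjugacy of $fb$ and $gb$ into the non-existence of a centralising element of $b$ conjugating $f$ to $g$, and then pass to a finite quotient $A\wr(B/N)$ where Proposition \ref{prop:BinC_effective} finishes. However, the step you flag as ``the main technical obstacle'' is the actual content of the proposition, and your sketch of it does not go through as stated. The difficulty is that an element $\bar c\in C_{B/N}(\bar b)$ conjugating $\bar f$ to $\bar g$ need not be the image of any $c\in B$ conjugating $f$ to $g$: the quotient can create spurious translations of $\pi_N(\supp(f))$ onto $\pi_N(\supp(g))$ that do not lift. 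Injectivity of $\pi_N$ on $\supp(f)\cup\supp(g)\cup\{1,b\}$ together with separation of the cosets of $\langle b\rangle$ is not sufficient to exclude this. The paper resolves it by a trichotomy: (i) no $c\in B$ translates $\supp(f)$ onto $\supp(g)$, handled by Lemma \ref{separate_support_effective}, which requires injectivity on a ball of radius comparable to twice the support radius (so that the candidate translations $t_{\sigma(i)}s_i^{-1}$ are distinguished), not merely on the support set; (ii) translations exist but every one mismatches some value of $f$ and $g$, handled by killing the spurious permutations $\sigma$ with the same ball-injectivity argument; (iii) a genuine conjugator $c$ of $f$ to $g$ exists but none centralises $b$, handled by choosing $N$ so that $\pi_N([c,b])\neq 1$ for every such $c$, which again needs injectivity on a ball containing the commutators $[c,b]$, of length $O(\Psi(n))$.

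None of these three verifications is automatic from the two conditions you impose on $N$, and case (iii) in particular cannot be deduced from injectivity on supports plus coset separation: the image of a genuine conjugator could centralise $\bar b$ unless you explicitly arrange $\pi_N([c,b])\neq 1$. Since $\RG_{B,\C}(\Psi(n))$ does by definition provide injectivity on the full ball of radius $\Psi(n)$, your quantitative factor is the right one up to constants, but you must actually use that full strength (and track the constants so that the differences $t_{\sigma(i)}s_i^{-1}$ and the commutators $[c,b]$ fall inside the ball) rather than only injectivity on the finite support set. As written, the proof has a genuine gap precisely where the two quantitative hypotheses on $B$ are supposed to interact.
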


       \begin{proof}
           Let $fb \in A \wr B$ be as above. We may by Lemma \ref{lemma:fb_fsupport_distinct_cosets_b_effective} assume  that the elements of the support of $f$ lie in distinct cosets of $\left<b\right>$ and $\|fb\| \leq C_1 n$ for some constant $C_1 > 0$. Let $g \in A^B$ and $b^\prime \in B$ where $x = gb^\prime$, and let $\rho:A \wr B \to B$ be the natural retraction. If $b \nsim b^\prime$, then by conjugacy separability of $B$ we have that there exists a subgroup $N_B \in \mathcal{N}_{\C}(B)$ such that $\pi_{N_B}(b') \notin \pi_{N_B}(b^B).$ Since $\rho((fb)^G) = b^{B}$ and $\rho(gb^\prime) = b^\prime,$ we have that $$\D_{G}((fb)^G,x) \leq \Conj_{B,\C}(n).$$
           
           Thus, we may assume that there exists an element $y \in B$ such that $y b^\prime y^{-1} = b$ where $\|y\| \leq \Short_{B}(n).$ Therefore, we may write
           $$
                y x y^{-1} = ygb^\prime y^{-1} = yg y^{-1} y b^\prime y^{-1} = ygy^{-1} b.
           $$
           In particular, we have that $\|ygy^{-1} b^\prime\| \leq 2\Short_{B}(n) + 2n$. Following Lemma \ref{lemma:fb_fsupport_distinct_cosets_b_effective}, we may assume that the elements of $\supp(g)$ all lie in different cosets of $\left<b \right>$. Moreover, we have that $\|gb\| \leq C_1 \Short_B(n) +  C_1 n$ where $C_1 > 2$ is some constant.
           
           Following Lemma \ref{lemma:commute_support}, we have that $f \neq c g c^{-1}$ for every element $c \in C_B(b).$ By case analysis, we have that one of the following must occur:
               \begin{enumerate}
        \item[(i)] there is no element $c \in B$ such that $c \supp(f) = \supp(g)$;
        \item[(ii)] there is an element $c \in B$ such that $c \supp(f) = \supp(g)$ but for every such element $c$ there is an element $t \in \supp(g)$ such that $f(ct) \neq g(t)$;
        \item[(iii)] there is an element $c \in B$ such that $c \supp(f) = \supp(g)$ and for every element $t \in \supp(g)$ we have $f(ct) = g(t)$; however, no such element $c$ centralizes $b$.
    \end{enumerate}
    We will now show that in each of the cases above we may always construct a quotient of $B$ so that the images of $f$ and $g$ are not conjugate via the centralizer of the image of $b$.
    
    Lemma \ref{separation_cosets} applied to the set $S=\supp(f) \cup \supp(g)$ implies there exists a subgroup $N_S \in \mathcal{N}_{\C}(B)$ such that $$|B/N_S| \leq (\Cyclic_{B,\C}(C_1 \: \Short_{B}(n) + C_1 \: n))^{(C_1 \: \Short_{B}(n) + C_1 \: n)^2},$$ and where for each $s_i,s_j \in \supp(f)$, or $t_i,t_j \in \supp(g)$, we have that their images lie in distinct right cosets of $\pi_{N_S}(\left<b\right>)$. Before we get to the cases, we let $\Phi_1(n) = (\Cyclic_{B,\C}(C_1 \: \Short_{B}(n) + C_1 \: n))^{(C_1 \: \Short_{B}(n) + C_1 \: n)^2}$. \\
    
    \noindent \textbf{Case (i):} \\
    Following Lemma \ref{separate_support_effective}, there exists a subgroup $N \in \mathcal{N}_{\C}(B)$ such that there exists no element $\bar{c} \in B/N$ satisfying $\bar{c} \cdot  \pi_N(\supp(f)) = \pi_N(\supp(g))$ and where $$|B/N| = \D_{B,\C,\text{inj}}(\Ball_B(C_2 \: \Short_{B}(n) + C_2 \:n))$$ for some constant $C_2 > 0$ greater than $C_1$. By taking $M = N \cap N_S$, we may assume that the images of each pair of elements $\supp(f)$ or in $\supp(g)$ lie in different right cosets of $\left<b\right>.$ If we let $\rho:A \wr B \to A \wr (B/M)$ be the natural extension given by Lemma \ref{lemma:map_extension}, it is easy to see that $\supp(\pi(f)) = \pi(\supp(f))$, $\supp(\pi(g)) = \pi(\supp(g))$, and that the individual elements of $\supp(\pi(f))$ and $\supp(\pi(g))$ lie in different cosets of $\left< \pi(b) \right>$ in $B/N$. Now that $\pi(f)$ is not conjugate to $\pi(g)$ via any element of $B / M$. Thus, Lemma \ref{lemma:commute_support} implies that $\pi(g b)  \notin (\pi(fb))^{A \wr (B/B)}.$ Letting $C_4 = \Max\{C_3,8\}$ and $\Psi_1(n) = C_4 \: \Short_B(n) + C_4 \: n$, we have that if $A$ is a finite abelian group, then we have that $A \wr (B/N_B) \in \C$. If we let $\Phi_2(n) = \RG_{B,\C}(\Psi_1(n)) \cdot \Phi_1(n),$ we may write
    $$
        \D_{A \wr B}((fb)^{A \wr B}, x) \leq \Phi_2(n) \cdot (|A|)^{\Phi_2(n)}.
    $$
    
    Now suppose that $A$ is an infinite, finitely generated abelian group. We have by following the proof of Case 4 of Proposition \ref{prop:BinC_effective} that
    $$
    \D_{A \wr B}((fb)^{A \wr B},x) \leq (\Phi_2(n) \cdot \Conj_{A, \C}(3 \cdot \Phi_2(n) \cdot n))^{(\Phi_2(n))^3}.
    $$
    \\
    \noindent\textbf{Case (ii):} \\
    We have that $|\supp(f)| = |\supp(g)|$. Thus, we may write $\supp(f) = \{s_1,\cdots, s_k\}$ and $\supp(g) = \{t_1,\cdots, t_k\}$. Since $c \cdot \supp(f) = \supp(g)$, there exists a permutation $\sigma \in \Sym(k)$ such that $c = t_{\sigma(i)} \: s_i^{-1}$ for all $i \in \{1,\cdots,k\}.$ Let $\Sigma \subset \Sym(k)$ be the subset of permutations that do not appear as translations of $\supp(f)$ onto $\supp(g)$, i.e. $\sigma \in \Sigma$ if there are $i,j \in \{1,\cdots,k\}$ such that $t_{\sigma(i)}s_i^{-1} \neq t_{\sigma(j)}s_j^{-1}.$ Using an approach similar to Lemma \ref{separate_support_effective}, there exists a subgroup $M \in \mathcal{N}_{\C}(B)$ such that none of the permutations from $\Sigma$ will appear as a translation of $\pi_M(\supp(f))$ onto $\pi_M(\supp(g))$ and where $|B/M| = \D_{B,\C,\text{inj}}(\Ball(C_4 \: \Short_{B}(n) + C_4 \: n))$. Letting $\rho:A \wr B \to A \wr (B/M \cap N_S)$ be the natural extension, we note by assumption that for every element $c \in B$ such that $c \cdot \supp(f) = \supp(g)$ there exists an element $s \in \supp(f)$ such that $f(cs) \neq g(s)$ in $A$. Since $\supp(f) \cup \supp(g)$ embeds into $B/N_S \cap M$, we see that $$\rho(f)(\rho(cs)) = f(cs) \neq g(s) = \rho(g)(\rho(s)).$$ Thus, $\rho(f)$ is not conjugate to $\rho(g)$ via an element $B/N_S \cap M$. Lemma \ref{lemma:commute_support} implies that $\rho(f  b)$ is not conjugate to $\rho(g b)$. If $A$ is a finite abelian group, then we have 
    $$
\D_{A \wr B}((fb)^{A \wr B}, x) \leq \Phi_2(n) \cdot (|A|)^{\Phi_2(n)}.
    $$
   Similarly, if $A$ is an infinite abelian group, then we may write
   $$
    \D_{A \wr B}((fb)^{A \wr B},x) \leq (\Phi_2(n) \cdot \Conj_{A, \C}(3 \cdot \Phi_2(n) \cdot n))^{(\Phi_2(n))^3}.
   $$
    \\
    \noindent \textbf{Case (iii):}\\
    Let $C \subset B$ be the set of elements that conjugate $f$ to $g$, i.e. $C = \{ c \in B \: | \:  c f c^{-1} = g\}$. By assumption, if $c \in C$, then $c \notin C_B(b)$. We have by \cite[Theorem 3.4]{davis_olshanskii} that if $a \in \supp(f) \cup \supp(g)$, then there exists a constant $C_5 > 0$ such that $\|a\|_S \leq C_5 \: n.$ Thus, there exists a subgroup $M \in \mathcal{N}_{\C}(B)$ such that $\pi_M([b,c]) \neq 1$ for all $c \in C$. Since $c \in C$, we have that $c \cdot \supp(f) = \supp(g)$. Therefore, for $c \in C$, we have that $\|[c,b]\|_S \leq 4 C_5 \: n$. Letting $C_6 = \Max\{4 \: C_5,C_4\}$, we have that there exists a subgroup $M \in \mathcal{N}_{\C}(B)$ such that $\pi_M$ restricted to $\Ball_{B}(C_6 \: \Short_B(n) + C_6 \: n)$ is injective and where $|B/M| = \D_{B,\C,\text{inj}}(\Ball_B(C_6 \:  \Short_B(n) + C_6 \: n))$. Letting $\rho: A \wr B \to A \wr (B/N_S \cap M)$ be the natural extension, we note that the only elements that conjugate $\rho(f)$ to $\rho(g)$ are the elements of $\rho(C)$, but none of those elements centralize $\rho(b).$ In particular, we have that $\rho(f b)$ is not conjugate to $\rho(g b)$ by Lemma \ref{lemma:commute_support}. Let $\Psi_2(n) = C_6 \:  \Short_B(n) + C_6 \: n$ and $\Phi_3(n) = \RG_{B, \C}(\Psi_2(n)) \cdot \Phi_1(n)$. When $A$ is a finite abelian group, we have that
    $$
    \D_{A \wr B, \C}((fb)^{A \wr B},x) \leq  \Phi_3(n) \cdot (|A|)^{\Phi_3(n)}.
    $$
    When $A$ is an infinite abelian group, Proposition \ref{prop:BinC_effective} implies that
    $$
    \D_{A \wr B,\C}((fb)^{A \wr B},x) \leq (\Phi_3(n)\cdot \Conj_{A,\C}(3 \cdot \Phi_3(n) \cdot n))^{(\Phi_3(n))^3}. 
    $$
    \end{proof}

\subsection{Proof of the main result}
We now proceed to the main theorem which we will restate for the reader's convenience.
\begin{thmB}
\label{theorem:main_result}
    Let $A$ and $B$ be $\C$-conjugacy separable groups where $\C$ is an extension-closed psuedovariety of finite groups. Then $A \wr B$ is $\C$-conjugacy separable if and only if $B \in \C$ or if $A$ is abelian and $B$ is $\C$-cyclic subgroup separable. 
    
    If $B \in \C$, then $$
    \Conj_{A \: \wr B, \C}(n) \preceq (\Conj_{A,\C}(n))^{|B|^3}
    $$ 
    
    Let $\Phi(n) =  \Short_B(n) + n$ and 
    $$
    \Psi(n) = (\RG_{B,\C}(\Phi(n))\cdot (\Cyclic_{B,\C}(\Phi(n)))^{\Phi(n)^2}
    $$
    If $A$ is an infinite, finitely generated abelian group and $B$ is a $\C$-cyclic subgroup separable finitely generated group,
    then
    $$
    \Conj_{A \wr B ,S,\C}(n) \preceq \Max\left\{\Conj_{B,\C}(n), (\Psi(n) \cdot  \Conj_{A,\C}(\Psi(n) \cdot n))^{(\Psi(n))^3}\right\}.
    $$ 
    
    If $A$ is a finite abelian
    group and $B$ is a $\C$-cyclic subgroup separable finitely generated group,
    then
    $$
      \Conj_{A \wr B,\C}(n) \preceq \Max\left\{\Conj_{B,\C}(n), \Psi(n)\cdot 2 ^{\Psi(n)}\right\}.
      $$
\end{thmB}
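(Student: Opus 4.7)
The plan is to assemble Theorem \ref{effective_main_thm} from the case-by-case quantitative results established in Subsections \ref{subsection:B_finite} and \ref{subsection:B_infinite}. First I would handle the necessity of the classification. If neither $B \in \C$ nor $A$ is abelian, Theorem \ref{theorem:gruenberg} tells us that $A \wr B$ is not even residually-$\C$, so a fortiori not $\C$-conjugacy separable. If $B \notin \C$ and $A$ is abelian but some cyclic subgroup $\langle b\rangle \leq B$ fails to be $\C$-separable, then the argument given immediately before Corollary \ref{cor:effective_conj_b_in_B} (picking a non-trivial element in the $\proC$-closure of $\langle b\rangle$ and forming an appropriate commutator) produces an element whose conjugacy class is not $\C$-closed in $A \wr B$.

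For the sufficiency and the effective bound in the case $B \in \C$, I would simply invoke Proposition \ref{prop:BinC_effective}, which yields the stated bound $\Conj_{A\wr B,\C}(n) \preceq (\Conj_{A,\C}(n))^{|B|^3}$.

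For the sufficiency and the effective bound in the case that $A$ is abelian and $B$ is $\C$-cyclic subgroup separable, I would split an arbitrary element $w \in A \wr B$ and a candidate non-conjugate element $w'$ according to the trichotomy already set up in Subsection \ref{subsection:B_infinite}: writing $w = fb$ with $f \in A^B$ and $b \in B$, one of the three cases $f \neq 1, b = 1$, or $f = 1, b \neq 1$, or both nontrivial, must apply (up to a trivial case where $w = 1$, where residual-$\C$-ness of $A \wr B$ suffices). These three cases are handled by Proposition \ref{prop:effective_sep_element_A^B}, Corollary \ref{cor:effective_conj_b_in_B}, and Proposition \ref{prop:f_b_nontrivial_conj_sep} respectively. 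Each produces a bound of the required shape in terms of $\Short_B$, $\RG_{B,\C}$, $\Cyclic_{B,\C}$, $\Conj_{B,\C}$, and $\Conj_{A,\C}$; taking the maximum over the three cases absorbs them into the displayed bound involving $\Psi(n)$. A separate line of the case analysis keeps track of whether $A$ is finite or infinite, which changes the final $\Conj_{A,\C}(\cdot)$ factor into a $2^{(\cdot)}$ factor via the crude bound $|A|^n \preceq 2^n$.

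The main obstacle is bookkeeping rather than mathematical: one must verify that the arguments of $\Cyclic_{B,\C}$, $\Conj_{A,\C}$, $\RG_{B,\C}$, and $\Short_B$ appearing in the three case-bounds really are all dominated by $\Phi(n) = \Short_B(n) + n$ and therefore by $\Psi(n)$, so that the three expressions can be uniformly majorized by the single expression in the theorem statement. The non-obvious point is that elements arising after conjugating by a short conjugator (produced via $\Short_B$) may grow to size comparable to $\Phi(n)$, so all downstream constants feeding into $\RG_{B,\C}$ and $\Cyclic_{B,\C}$ must be re-expressed with $\Phi(n)$ in place of $n$ before taking the maximum. Once this is checked, the $\preceq$-asymptotics follows, and the uniform statement of Theorem \ref{effective_main_thm} is obtained.
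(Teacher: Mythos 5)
Your proposal is correct and follows essentially the same route as the paper: Proposition \ref{prop:BinC_effective} for the case $B \in \C$, and for infinite $B$ the trichotomy on $w = fb$ handled by Proposition \ref{prop:effective_sep_element_A^B}, Corollary \ref{cor:effective_conj_b_in_B}, and Proposition \ref{prop:f_b_nontrivial_conj_sep}, with necessity coming from Theorem \ref{theorem:gruenberg} and the pro-$\C$-closure commutator construction. The bookkeeping issue you flag (re-expressing all arguments in terms of $\Phi(n)$ after applying $\Short_B$) is exactly the point the paper's proof also has to track, so nothing is missing.
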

\begin{proof}
    If $B \in \C$, then Proposition \ref{prop:BinC_effective} implies that
    $$
    \Conj_{A \wr B, \C}(n) \preceq (\Conj_{A,\C}(n))^{|B|^2}.
    $$
    Thus, if $A$ is $\C$-conjugacy separable, then so is $A \wr B.$
    
    Now suppose that $B$ is infinite. Let $fb \in A \wr B$ such that $\|fb\| \leq n$ and where $fb \neq 1$, and let $x \in A \wr B \backslash (fb)^{A \wr B}$ where $\|x\| \leq n.$ Since all $\C$-conjugacy separable groups are residually-$\C$, we have by Lemma \ref{lemma:abelian_necessity} that $A$ must be abelian. We have three cases. \newline
    
    \noindent \textbf{Case 1}: $f \neq 1$ and $b = 1.$ \newline 
    Proposition \ref{prop:effective_sep_element_A^B} implies that there exists a subgroup $N \in \mathcal{N}_{\C}(A \wr B)$ such that $\pi_N(x) \notin \pi_N(f^{A \wr B})$ and where 
    $$
    |A \wr (B / N)| \preceq (\RG_{B,\C}(n) \cdot \Conj_{A,\C}(\RG_{B,\C}(n) \cdot n))^{(\RG_{B,\C}(n))^3}.
    $$ 
    If $A$ is finite, we then have that
    $$
    |A \wr (B/N)| \leq \RG_{B \C}(n) \cdot (|A|)^{\RG_{B,\C}(n)}.
    $$
    
    $\:$ 
    
    \noindent \textbf{Case 2}: $f = 1$ and $b \neq 1$.
    \newline
    Corollary \ref{cor:effective_conj_b_in_B} implies that $b^{A \wr B}$ is $\C$-separable in $A \wr B$ if and only if both $b^{A \wr B}$ and the subgroup $\left<b\right>$ are $\C$-closed in $B$. Thus, $B$ is $\C$-conjugacy separable and $\C$-cyclic subgroup separable if and only if we have that $b^{A \wr B}$ is $\C$-separable for arbitrary $b \in B.$ Moreover, Corollary \ref{cor:effective_conj_b_in_B} implies there exists a subgroup $N \in \mathcal{N}_{\C}(A \wr B)$ such that $\pi_N(x) \notin \pi_N(b^{A \wr B})$, and since $
    (\Cyclic_{B,\C}(\Phi(n)))^{\Phi(n)^2} \preceq \Psi(n)$, we have that
    $$
    |(A \wr B) / N| \preceq \Max\left\{\Conj_{B,\C}(n),(\Psi(n) \cdot  \Conj_{A,\C}(\Psi(n) \cdot n))^{(\Psi(n))^3}\right\}.
    $$
    If $A$ is finite, then
    $$
    |(A \wr B)/N| \preceq \Max\left\{\Conj_{B,\C}(n),\Psi(n) \cdot (|A|)^{\Psi(n)}\right\}.
    $$
   \newline
    
    \noindent \textbf{Case 3}: $f \neq 1$ and $b \neq 1$. 
    \newline Proposition \ref{prop:f_b_nontrivial_conj_sep} implies that $(fb)^{A \wr B}$ is $\C$-closed in $A \wr B$. Moreover, we have that there exists a subgroup $N$ such that $\pi_N(x) \notin \pi_N((fb)^{A \wr B})$ and where
    $$
    |(A \wr B)/ N| \preceq \Max\{\Conj_{B,\C}(n), (\Psi(n) \cdot  \Conj_{A,\C}(\Psi(n) \cdot n))^{(\Psi(n))^3}\}.
    $$
    If $A$ is finite, we then have
    $$
    |(A \wr B) / N| \preceq \Max\left\{ \Conj_{B,\C}(n), \Psi(n) \cdot (|A|)^{\Psi(n)}\right\}. 
    $$
\end{proof}

\section{Wreath Products of Nilpotent Groups}
\label{section:wreath_product_of_nipotent_groups}
Theorem \ref{effective_main_thm} gives an upper bound on the conjugacy depth function of the wreath product $A \wr B$ in terms of the conjugacy depth functions of $A$ and $B$, the residual girth function of $B$, the shortest conjugator function of $B$ and the cyclic subgroup separability function of $B$. In general, upper bounds for these functions are very hard to establish and are known only for several classes of groups. One of them are finitely generated nilpotent groups.

Let $G$ be a group. The first term of \textbf{lower central series} is given by $G_1 = G$, and we inductively define the \textbf{$i$-step of the lower central series} as
$G_i = [G,G_{i-1}]$. A group is \textbf{nilpotent} if for some natural number $k$ we have that $G_{k+1} = \{1\}$, and we say that $G$ is a \textbf{nilpotent group of step length $k$} if $k$ is the minimal such natural number.

Before we get to the main theorem of this section, for readers convenience, we recall relevant results that will be used in the proof of Theorem \ref{theorem_nilpotent}. The following statement is an application of \cite[Theorem 2.2]{BouRabee10} and \cite[Lemma 1.2]{BouRabee10}.
\begin{theorem}
\label{theorem:BouRabee10}
If $G$ is a finitely generated abelian group, then $ \Conj_A(n) \approx \log(n)$. 
\end{theorem}

The following statement follows from \cite[Theorem 2]{bou_rabee_studenmund}.
\begin{theorem}
\label{theorem:bou_rabee_studenmund} 
If $B$ is a finitely generated nilpotent group, then $\RG_{B,\C}(n) \preceq n^{d_1}$ for some natural number $d_1$.
\end{theorem}

The following is a corollary of \cite[Theorem 1.e]{subgroup_sep_article}.
\begin{theorem}
\label{theorem:subgroup_sep_article}
If $B$ is a finitely generated nilpotent group, then $\Cyclic_B(n) \preceq  n^{d_2}$ for some $d_2$.
\end{theorem}

The following was proved in \cite[Theorem 4.6]{macdonald2015logspace}.
\begin{theorem}
\label{theorem:macdonald2015logspace}
If $B$ is a finitely generated nilpotent group, then $\Short_{B}(n) \preceq  n^{d_3}$ for some natural number $d_3$.
\end{theorem}

We now proceed to the proof of the main result of this section.
\begin{thmC}\label{theorem_nilpotent}
    Let $A$ be a finitely generated abelian group, and suppose that $B$ is an infinite, finitely generated nilpotent group. If $B$ is abelian, then
    $$
     \Conj_{A \wr B}(n) \preceq n^{n^{n^2}},
    $$
    and if $A$ is finite, then
    $$
    \Conj_{A \wr B}(n) \preceq  2^{n^{n^2}}.
    $$
    Otherwise, then there exists a natural number $d$ such that
    $$
    \Conj_{A \wr B}(n) \preceq n^{n^{ n^d}}.
    $$
    Moreover, if $A$ is finite, then
    $$
    \Conj_{A \wr B}(n) \preceq  2^{n^{n^d}}.
    $$
\end{thmC}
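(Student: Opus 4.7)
The plan is to apply Theorem \ref{effective_main_thm} with $\C$ taken to be the pseudovariety of all finite groups, and then substitute the explicit polynomial bounds on the auxiliary separability functions of finitely generated nilpotent groups recalled at the start of this section. For the nilpotent group $B$ we shall use $\RG_{B}(n) \preceq n^{d_1}$, $\Cyclic_B(n) \preceq n^{d_2}$, $\Short_B(n) \preceq n^{d_3}$, together with $\Conj_B(n)$ bounded by a polynomial in $n$; for the finitely generated abelian $A$ we use $\Conj_A(n) \preceq \log n$. A crucial simplification in the abelian-$B$ case is that conjugation is trivial in $B$, so $\Short_B \equiv 0$ and hence $\Phi(n) = n$.

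First I would treat the case where $B$ is abelian. With $\Phi(n) = n$, direct substitution gives
\[
\Psi(n) \;=\; \RG_{B}(n)\cdot (\Cyclic_B(n))^{n^2} \;\preceq\; n^{d_1}\cdot n^{d_2 n^2} \;\preceq\; n^{D n^2}
\]
for some constant $D$ depending on $B$. Plugging this into the infinite-$A$ branch of Theorem \ref{effective_main_thm}, the dominant term $(\Psi(n) \cdot \Conj_A(\Psi(n) \cdot n))^{\Psi(n)^3}$ is at most $n^{D'n^{2}\cdot n^{3Dn^{2}}}$ after absorbing $\Conj_A$ (which is only logarithmic in its input) into $n^{D'n^2}$. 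Since $D'n^{2}\cdot n^{3Dn^{2}} = D'n^{2+3Dn^{2}}\preceq n^{n^2}$, the whole expression is $\preceq n^{n^{n^2}}$; this dominates $\Conj_B(n)$ and therefore bounds $\Conj_{A\wr B}(n)$. The corresponding computation in the finite-$A$ branch of Theorem \ref{effective_main_thm} replaces $\Conj_A$ by $2^{\Psi(n)}$ and yields $\Psi(n)\cdot 2^{\Psi(n)}\preceq 2^{2 n^{Dn^{2}}}\preceq 2^{n^{n^{2}}}$, as claimed.

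For nilpotent non-abelian $B$, the bound $\Short_B(n)\preceq n^{d_3}$ forces $\Phi(n) \preceq n^{d_3}$, and the same substitution now gives
\[
\Psi(n) \;\preceq\; n^{d_1 d_3} \cdot n^{d_2 d_3 \cdot n^{2d_3}} \;\preceq\; n^{n^d}
\]
for a suitable natural number $d$. Running this through Theorem \ref{effective_main_thm} exactly as in the abelian case, the tower height is unchanged but the innermost exponent $n^2$ is replaced by $n^d$, producing the claimed bounds $n^{n^{n^d}}$ (for $A$ infinite abelian) and $2^{n^{n^d}}$ (for $A$ finite abelian). The hard part is not group-theoretic — all of the conjugacy-separability content has already been extracted by Theorem \ref{effective_main_thm} — but purely arithmetic: one must carefully track how the polynomial prefactors and the constants coming from the definition of $\preceq$ are absorbed at each level of the resulting tower of exponentials, and verify that the final expressions indeed match the statement.
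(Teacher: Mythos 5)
Your proposal is correct and follows essentially the same route as the paper: invoke Theorem \ref{effective_main_thm} for the class of all finite groups, substitute the known polynomial bounds $\RG_B(n) \preceq n^{d_1}$, $\Cyclic_B(n) \preceq n^{d_2}$, $\Short_B(n) \preceq n^{d_3}$ (with $\Short_B \equiv 0$ when $B$ is abelian) and $\Conj_A(n) \approx \log n$, and then carry out the arithmetic of absorbing constants into the tower of exponentials under $\preceq$. The only cosmetic difference is that the paper simplifies $\Psi(n)$ to $n^{n^2}$ (resp.\ $n^{n^{2d_3}}$) immediately rather than carrying the constant $D$ in the exponent, which changes nothing.
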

\begin{proof}
Let us first assume that $A$ is infinite. Since $A$ is abelian, we have that the elements $a,b \in A$ are conjugate if and only if $ab^{-1} \neq 1$. In particular, we have by Theorem \ref{theorem:BouRabee10} that $ \Conj_A(n) \approx \log(n).$ Moreover, Theorem \ref{theorem:bou_rabee_studenmund} implies that $\RG_{B,\C}(n) \preceq n^{d_1}$ for some natural number $d_1$. Theorem \ref{theorem:subgroup_sep_article} implies that $\Cyclic_B(n) \preceq  n^{d_2}$ for some $d_2$.  When $B$ is abelian, we have that $\Short(n) = 0$. Therefore, we may write
\begin{eqnarray*}
\RG_{B,\C}(\Phi(n)) \cdot (\Cyclic_{B,\C}(\Phi(n))^{\Phi(n)^2} &\preceq& n^{d_1} \cdot (n^{d_2})^{n^2}\\
&\preceq&  n^{d_1 + d_2 \: n^2}\\
&\preceq& n^{n^2}.
\end{eqnarray*}
Observing that 
$$
\log(n) \preceq \left( \log(n^{n^2} \cdot n)\right)^{n^{3n^2}},
$$
Theorem \ref{effective_main_thm} implies that
\begin{eqnarray*}
\Conj_{A \wr B}(n) &\preceq& (n^{n^2})^{n^{n^2}} \cdot \left( \log(n^{n^2} \cdot n)\right)^{n^{3n^2}}\\
&\preceq & n^{n^{n^2}} \cdot \left( \log(n) +  n^2 \cdot \log(n)\right)^{n^{n^2}}\\
&\preceq& n^{n^{n^2}} \cdot \left(n^2 \cdot \log(n)\right)^{n^{n^2}}\\
&\preceq& n^{n^{n^2}} \cdot n^{2 \cdot n^{n^2}} \cdot (\log(n))^{n^{n^2}}\\
&\preceq& n^{n^{n^2}} \cdot (\log(n))^{n^{n^2}} \\
&\preceq& n^{n^{n^2}}.
\end{eqnarray*}
If $A$ is finite, then by noting that $n^{n^{2}} \cdot 2^{n^{n^{2}}} \leq 2^{2n^{n^{d}}},$ we have that
$$
\Conj_{A \wr B}(n) \preceq  2^{n^{n^2}}.
$$

When $B$ is a nonabelian, torsion free, finitely generated nilpotent group, we have that $\Short_{B}(n) \preceq  n^{d_3}$ for some natural number $d_3 > 0$ by Theorem \ref{theorem:macdonald2015logspace}. We also have that $\Conj_{B} \preceq n^{d_4}$ for some integer $d_4>0$.Thus, we may write
\begin{eqnarray*}
\RG_{B}(\Phi(n)) \cdot (\Cyclic_{B,\C}(\Phi(n))^{\Phi(n)^2} &\preceq& (n^{d_3} + n)^{d_1} \cdot \left((n^{d_3} + n)^{d_2}\right)^{(n^{d_3} + n)^2}\\
&\preceq&  (n^{d_3} + n)^{1 + d_2 \cdot n^{2 d_3} + 2n^{d_3 + 1} + n^2}\\
&\preceq& (n^{d_3})^{n^{2 d_3}}\\
&\preceq& n^{n^{2 d_3}}.
\end{eqnarray*}
Let $d_5 = 2d_3$. Since
$$
n^{d_4} \preceq ( \log(n^{n^{d_5}}\cdot n))^{n^{n^{d_5}}},
$$
we have by a similar calculation that
\begin{eqnarray*}
\Conj_{A \wr B}(n) \preceq n^{n^{n^{d_5}}},
\end{eqnarray*}
and when $A$ is finite, we note that $n^{n^{d_5}} \cdot 2^{n^{n^{d_5}}} \leq 2^{2 n^{n^{d_5}}}$, so we may write
$$
\Conj_{A \wr B}(n) \preceq 2^{n^{n^{d_5}}}. 
$$
\end{proof}

\section{Conjugacy separability of the free metabelian group}
\label{section:metabelian}
A final application of Theorem \ref{effective_main_thm} is a computation of an upper bound for the conjugacy separability depth function of the free metabelian group.

First, let us recall the basic terminology. Given a group $G$, the first term of the \textbf{derived series} of $G$ is given by $G^{(0)} = G$, and the \textbf{$i$-th term of the derived series is given by} $[G^{(i-1)},G^{(i-1)}]$. Clearly, $G^{(i)} \geq G^{(i+1)}$ for all $i \in \mathbb{N}$. We say that that $G$ is \textbf{$m$-step solvable} is $G^{(m)} = \{1\}$. We say that a group is \emph{metabelian} if it is $2$-step solvable. Let $F_m$ be the free group of rank $m$, and let $(F_{m})^{(i)}$ be  the $i$-th step of the derived series of $F_m$. The \textbf{free solvable group of rank $m$ and derived length $d$} is given by
$$
S_{m,d} = F_m / (F_{m})^{(d+1)}.
$$
and $S_{m,d}$ is then called the \textbf{free metabelian group of rank $m$}.

There exists a well known embedding $\rho:S_{m,d} \to \mathbb{Z}^m \wr S_{m,d-1}$ called the Magnus embedding that satisfies a number of valuable properties (see \cite[11.3]{drutu_kapovich}). For instance, the embedding $\rho$ is a bi-Lipschitz embedding \cite[Theorem 1]{sale_magnus} with respect to the word metric. More importantly, we are able to detect conjugacy classes of elements in $S_{m,d}$ via the Magnus embedding, i.e. if $g,h \in S_{m,d}$, then $g \sim_{S_{m,d}} h$ if and only if their images in $\mathbb{Z}^m \wr S_{m,d-1}$ are conjugate by \cite[Theorem 1]{magnus_remeslennikov}. In particular, we have by \cite[Corollary 6]{magnus_remeslennikov} that $S_{m,d}$ is a conjugacy separable group.

To provide a calculation of conjugacy separability of $S_{m,2}$, we need the following lemma.
\begin{lemma}\label{conj_sub_effective}
    Let $G$ and $H$ be conjugacy separable, finitely generated groups such that $H$ is a subgroup of $G$. If $x,y \in H$ are two non-conjugate elements of $G$, then $\D_{H}(x^H,y) \leq \D_G(x^G,y)$ and $\D_{H}(y^H,x) \leq \D_G(y^G,x).$
\end{lemma}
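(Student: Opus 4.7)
The plan is to argue by taking the quotient of $G$ witnessing $\D_G(x^G, y)$ and restricting it to $H$ to obtain a compatible quotient of $H$ of no larger size.

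First, I would fix a normal subgroup $N \normfileq G$ such that $\pi_N(y) \notin \pi_N(x^G)$ in $G/N$ and $[G:N] = \D_G(x^G, y)$; such an $N$ exists by conjugacy separability of $G$. Setting $M = N \cap H$, the restriction of the projection $G \to G/N$ to $H$ induces an injective homomorphism $H/M \hookrightarrow G/N$, so $M \normfileq H$ and $[H:M] \leq [G:N]$.

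Next, I would check that $\pi_M(y) \notin \pi_M(x^H)$. If not, there would exist $h \in H$ with $hxh^{-1}M = yM$, i.e.\ $hxh^{-1}y^{-1} \in M \subseteq N$. But then in $G/N$ we would have $hxh^{-1}N = yN$, placing $\pi_N(y)$ inside $\pi_N(x^G)$ and contradicting the choice of $N$. Hence $M$ is a witness for the $H$-separability of $y$ from $x^H$, yielding
\[
\D_H(x^H, y) \leq [H:M] \leq [G:N] = \D_G(x^G, y).
\]
The second inequality $\D_H(y^H, x) \leq \D_G(y^G, x)$ follows by the identical argument with the roles of $x$ and $y$ swapped.

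The only subtle point is noting that $H/M$ embeds into $G/N$, which guarantees the index bound $[H:M] \leq [G:N]$ without requiring any $\C$-closure hypothesis beyond the class being closed under subgroups (which holds for all the pseudovarieties considered in the paper, and certainly for the class of all finite groups in the present conjugacy-separable setting). I do not anticipate any real obstacle; the argument is essentially a one-line pullback observation, and conjugacy separability of $H$ is in fact not used in the proof itself, only to ensure that $\D_H(x^H, y)$ is well-defined (which is already guaranteed by $x \not\sim_H y$ as follows from $x \not\sim_G y$).
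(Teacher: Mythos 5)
Your proposal is correct and is essentially the paper's own argument: the paper restricts the witnessing surjection $\varphi\colon G \to Q$ to $H$ and bounds $\D_H(x^H,y)$ by $|\Im(\varphi|_H)| \leq |Q|$, which is exactly your pullback via $M = N \cap H$ with $N = \ker\varphi$. Your write-up is in fact slightly more explicit than the paper's in verifying that $\pi_M(y) \notin \pi_M(x^H)$, but there is no substantive difference.
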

\begin{proof}
    There exists an surjective homomorphism $\varphi:G \to Q$ such that $\varphi(y) \notin (\varphi(x))^Q$ where $|Q| = \D_{G}(x^G,y).$ By restricting the homomorphism $\varphi$ to $H$, we obtain a homomorphism $\rho:H \to \text{Im}(\varphi)$ satisfying $\rho(y) \notin (\rho(x))^Q.$ Therefore, we have the following inequality:
    $$
    \D_{H}(x^H,y) \leq |\text{Im}(Q)| \leq |Q| = \D_{G}(x^G,y).
    $$
    The proof for the other inequality is similar.
\end{proof}
We now proceed to the main result of this section. 
\begin{thmD}
    If $S_{m,2}$ is the free metabelian group of rank $m$, then 
    $$
    \Conj_{S_{m,2}}(n) \preceq n^{n^{n^2}}.
    $$
\end{thmD}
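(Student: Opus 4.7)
The plan is to push the conjugacy depth bound through the Magnus embedding $\rho \colon S_{m,2} \to \mathbb{Z}^m \wr S_{m,1} = \mathbb{Z}^m \wr \mathbb{Z}^m$. The wreath product $\mathbb{Z}^m \wr \mathbb{Z}^m$ falls into the regime of Theorem \ref{conj_sep_nil_wreath_product} with $A = \mathbb{Z}^m$ infinite finitely generated abelian and $B = \mathbb{Z}^m$ infinite finitely generated abelian, so we obtain
\[
    \Conj_{\mathbb{Z}^m \wr \mathbb{Z}^m}(n) \preceq n^{n^{n^2}}.
\]

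Next I would transfer this bound back to $S_{m,2}$. Two ingredients combine to make this possible. First, by Remeslennikov \cite{magnus_remeslennikov}, the Magnus embedding detects conjugacy: $g \sim_{S_{m,2}} h$ if and only if $\rho(g) \sim_{\mathbb{Z}^m \wr \mathbb{Z}^m} \rho(h)$. Hence if $g, h \in S_{m,2}$ are non-conjugate in $S_{m,2}$, then $\rho(g), \rho(h)$ are non-conjugate in the ambient wreath product, and Lemma \ref{conj_sub_effective} yields
\[
    \D_{S_{m,2}}(g^{S_{m,2}}, h) \leq \D_{\mathbb{Z}^m \wr \mathbb{Z}^m}(\rho(g)^{\mathbb{Z}^m \wr \mathbb{Z}^m}, \rho(h)).
\]
Second, by Sale \cite{sale_magnus}, the Magnus embedding is bi-Lipschitz, so there is a constant $K$ with $\|\rho(g)\| \leq K \|g\|$ for all $g \in S_{m,2}$. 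Therefore whenever $\|g\|, \|h\| \leq n$ we have $\|\rho(g)\|, \|\rho(h)\| \leq Kn$, and combining the two bounds gives
\[
    \Conj_{S_{m,2}}(n) \leq \Conj_{\mathbb{Z}^m \wr \mathbb{Z}^m}(Kn) \preceq (Kn)^{(Kn)^{(Kn)^2}} \preceq n^{n^{n^2}},
\]
where the final step absorbs the constant $K$ into the asymptotic equivalence $\approx$.

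There is no real obstacle here beyond correctly invoking the three cited facts — the conjugacy-detection property of $\rho$, its bi-Lipschitz nature, and Theorem \ref{conj_sep_nil_wreath_product} in the abelian-acting-group regime. The only point worth being explicit about in the write-up is that Lemma \ref{conj_sub_effective} is applicable precisely because conjugacy in $S_{m,2}$ and conjugacy in $\mathbb{Z}^m \wr \mathbb{Z}^m$ coincide on the image of $\rho$; otherwise one would have to worry about pairs non-conjugate in the subgroup but conjugate in the ambient group, which would invalidate the restriction argument.
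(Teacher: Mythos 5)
Your proposal is correct and follows essentially the same route as the paper: apply Theorem \ref{conj_sep_nil_wreath_product} to $\mathbb{Z}^m \wr \mathbb{Z}^m$, use Remeslennikov's result that the Magnus embedding detects conjugacy together with Lemma \ref{conj_sub_effective} to restrict the depth bound to $S_{m,2}$, and absorb the bi-Lipschitz constant from Sale's theorem into the asymptotic equivalence. The only cosmetic difference is that the paper dispatches the $m=1$ case separately via \cite[Corollary 2.3]{BouRabee10}, which your argument does not need to do since the embedding argument covers it anyway.
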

\begin{proof}
    If $m=1$, then our statement is clear from \cite[Corollary 2.3]{BouRabee10}. Therefore, we may assume that $m > 1$. Theorem \ref{conj_sep_nil_wreath_product} implies that 
    $$
    \Conj_{\mathbb{Z}^{m} \wr \mathbb{Z}^m}(n) \preceq n^{n^{n^2}}.
    $$
    
    The Magnus embedding allows us to assume that $S_{m,2} \leq \mathbb{Z}^m \wr \mathbb{Z}^m$ where if $S,X$ are finite generating subsets for $S_{m,2}$ and $\mathbb{Z}^m \wr \mathbb{Z}^m$, respectively, then $\|g\|_S \leq C \|g\|_X$ for some constant $C > 0$. Moreover, we have that if $g \nsim h$ as elements of $S_{m,2}$, then $g \nsim h$ as elements $\mathbb{Z}^m \wr \mathbb{Z}^m$. Now let $g,h \in S_{m,2}$ be two non-conjugate elements of length at most $n$ with respect to some finite generating subset. By Lemma \ref{conj_sub_effective}, we have the following inequality:
    $$
    \D_{S_{m,2}}(g^{S_{m,2}},h) \leq \D_{\mathbb{Z}^m \wr \mathbb{Z}^m}(g^{\mathbb{Z}^m \wr \mathbb{Z}^m},h) \leq \Conj_{\mathbb{Z}^m \wr \mathbb{Z}^m}(C \:n).
    $$
    Since this inequality is independent of pairs of non-conjugate elements at length at most $n$, we have that
    $$
    \Conj_{S_{m,2}}(n) \preceq \Conj_{\mathbb{Z}^m \wr \mathbb{Z}^m}(n) \preceq n^{n^{n^2}}. 
    $$
\end{proof}

We may hope to extend the above result to compute $\Conj_{S_{m,d}}(n)$ for all $m,d > 1$ by computing the functions $\Cyclic_{ S_{m,d}}(n)$ and $\RG_{S_{m,d}}(n)$. As far as the authors are aware, the function $\Cyclic_G(n)$ has only been computed for finitely generated nilpotent groups and seems to be difficult to compute even for the group $S_{2,2}$.

\section{Final Comments}
In the context of lamplighter groups and wreath products of abelian groups, the upper bounds produced in this article can be greatly improved, as we intend to show in upcoming work.

\section*{Acknowledgements}
The initial idea for this project was suggested to the authors by Ashot Minasyan. The authors would like to thank Alejandra Garrido for many useful consultations and suggestions, Tim Riley for telling us about the results in \cite{macdonald2015logspace},and  Rachel Skipper for making us aware of the results in \cite{leonov}.

Michal Ferov is currently supported by the Australian Research Council Laureate Fellowship FL170100032 of professor George Willis.

\bibliographystyle{plain}
\bibliography{references}

\begin{thebibliography}{10}

\bibitem{BouRabee10}
Khalid Bou-Rabee.
\newblock Quantifying residual finiteness.
\newblock {\em J. Algebra}, 323(3):729--737, 2010.

\bibitem{bou_rabee_studenmund}
Khalid Bou-Rabee and Daniel Studenmund.
\newblock Full residual finiteness growths of nilpotent groups.
\newblock {\em Israel J. Math.}, 214(1):209--233, 2016.

\bibitem{racgs}
Pierre-Emmanuel Caprace and Ashot Minasyan.
\newblock On conjugacy separability of some {C}oxeter groups and
  parabolic-preserving automorphisms.
\newblock {\em Illinois J. Math.}, 57(2):499--523, 2013.

\bibitem{limit}
S.~C. Chagas and P.~A. Zalesskii.
\newblock Limit groups are conjugacy separable.
\newblock {\em Internat. J. Algebra Comput.}, 17(4):851--857, 2007.

\bibitem{davis_olshanskii}
Tara~C. Davis and Alexander~Yu. Olshanskii.
\newblock Subgroup distortion in wreath products of cyclic groups.
\newblock {\em J. Pure Appl. Algebra}, 215(12):2987--3004, 2011.

\bibitem{subgroup_sep_article}
J.~Der\'{e} and M.~Pengitore.
\newblock Effective subgroup separability of finitely generated nilpotent
  groups.
\newblock {\em Journal of Algebra}, 506:489--508, 2018.

\bibitem{Dere_Pengitore}
Jonas Der\'{e} and Mark Pengitore.
\newblock Effective twisted conjugacy separability of nilpotent groups.
\newblock {\em Math. Z.}, 292(3-4):763--790, 2019.

\bibitem{drutu_kapovich}
Cornelia Dru\c{t}u and Michael Kapovich.
\newblock {\em Geometric group theory}, volume~63 of {\em American Mathematical
  Society Colloquium Publications}.
\newblock American Mathematical Society, Providence, RI, 2018.
\newblock With an appendix by Bogdan Nica.

\bibitem{dyer}
Joan~L. Dyer.
\newblock Separating conjugates in free-by-finite groups.
\newblock {\em J. London Math. Soc. (2)}, 20(2):215--221, 1979.

\bibitem{mf_thesis}
M.~Ferov.
\newblock {\em Separability properties of graph products of groups}.
\newblock PhD thesis, University of Southampton, 2015.

\bibitem{mf}
M.~Ferov.
\newblock On conjugacy separability of graph products of groups.
\newblock {\em J. Algebra}, 447:135--182, 2016.

\bibitem{formanek_polycyclic}
Edward Formanek.
\newblock Conjugate separability in polycyclic groups.
\newblock {\em J. Algebra}, 42(1):1--10, 1976.

\bibitem{garrido_GS}
Alejandra Garrido.
\newblock Abstract commensurability and the {G}upta-{S}idki group.
\newblock {\em Groups Geom. Dyn.}, 10(2):523--543, 2016.

\bibitem{goryaga}
A.~V. Goryaga.
\newblock Example of a finite extension of an {FAC}-group that is not an
  {FAC}-group.
\newblock {\em Sibirsk. Mat. Zh.}, 27(3):203--205, 225, 1986.

\bibitem{grigorchuk}
R.~I. Grigorchuk and J.~S. Wilson.
\newblock A structural property concerning abstract commensurability of
  subgroups.
\newblock {\em J. London Math. Soc. (2)}, 68(3):671--682, 2003.

\bibitem{gruenberg}
Karl~W Gr{\"u}nberg.
\newblock Residual properties of infinite soluble groups.
\newblock {\em Proceedings of the London Mathematical Society}, 3(1):29--62,
  1957.

\bibitem{MHall50}
M.~Hall, Jr.
\newblock A topology for free groups and related groups.
\newblock {\em Ann. of Math. (2)}, 52:127--139, 1950.

\bibitem{compact}
Emily Hamilton, Henry Wilton, and Pavel~A. Zalesskii.
\newblock Separability of double cosets and conjugacy classes in 3-manifold
  groups.
\newblock {\em J. Lond. Math. Soc. (2)}, 87(1):269--288, 2013.

\bibitem{LLM}
Sean Lawton, Larsen Louder, and D.~B. McReynolds.
\newblock Decision problems, complexity, traces, and representations.
\newblock {\em Groups Geom. Dyn.}, 11(1):165--188, 2017.

\bibitem{leonov}
Yu.~G. Leonov.
\newblock The conjugacy problem in a class of {$2$}-groups.
\newblock {\em Mat. Zametki}, 64(4):573--583, 1998.

\bibitem{macdonald2015logspace}
Jeremy Macdonald, Alexei Myasnikov, Andrey Nikolaev, and Svetla Vassileva.
\newblock Logspace and compressed-word computations in nilpotent groups.
\newblock {\em arXiv preprint arXiv:1503.03888}, 2015.

\bibitem{malcev}
A.~I. Mal’cev.
\newblock On homomorphisms onto finite groups.
\newblock {\em American Mathematical Society Translations, Series},
  2(119):67--79, 1983.

\bibitem{armando}
Armando Martino.
\newblock A proof that all {S}eifert 3-manifold groups and all virtual surface
  groups are conjugacy separable.
\newblock {\em J. Algebra}, 313(2):773--781, 2007.

\bibitem{mckinsey}
J.~C.~C. McKinsey.
\newblock The decision problem for some classes of sentences without
  quantifiers.
\newblock {\em The Journal of Symbolic Logic}, 8(2):61--76, 1943.

\bibitem{raags}
Ashot Minasyan.
\newblock Hereditary conjugacy separability of right-angled {A}rtin groups and
  its applications.
\newblock {\em Groups Geom. Dyn.}, 6(2):335--388, 2012.

\bibitem{1-rel}
Ashot Minasyan and Pavel Zalesskii.
\newblock One-relator groups with torsion are conjugacy separable.
\newblock {\em J. Algebra}, 382:39--45, 2013.

\bibitem{mostowski}
A.~Mostowski.
\newblock On the decidability of some problems in special classes of groups.
\newblock {\em Fund. Math.}, 59:123--135, 1966.

\bibitem{remeslennikov_polycyclic}
V.~N. Remeslennikov.
\newblock Conjugacy in polycyclic groups.
\newblock {\em Algebra i Logika}, 8:712--725, 1969.

\bibitem{remeslennikov}
V.~N. Remeslennikov.
\newblock Finite approximability of groups with respect to conjugacy.
\newblock {\em Sibirsk. Mat. \v{Z}.}, 12:1085--1099, 1971.

\bibitem{magnus_remeslennikov}
V.~N. Remeslennikov and V.G. Sokolov.
\newblock Some properties of a magnus embedding.
\newblock {\em Algebra i Logika}, 9(5):566--578, 1970.

\bibitem{rz}
L.~Ribes and P.~Zalesskii.
\newblock {\em Profinite groups}, volume~40 of {\em Ergebnisse der Mathematik
  und ihrer Grenzgebiete. 3. Folge. A Series of Modern Surveys in Mathematics
  [Results in Mathematics and Related Areas. 3rd Series. A Series of Modern
  Surveys in Mathematics]}.
\newblock Springer-Verlag, Berlin, second edition, 2010.

\bibitem{sale_magnus}
Andrew~W. Sale.
\newblock Metric behaviour of the {M}agnus embedding.
\newblock {\em Geom. Dedicata}, 176:305--313, 2015.

\bibitem{stebe}
P.~F. Stebe.
\newblock A residual property of certain groups.
\newblock {\em Proc. Amer. Math. Soc.}, 26:37--42, 1970.

\bibitem{stebe_sl3z}
Peter~F. Stebe.
\newblock Conjugacy separability of groups of integer matrices.
\newblock {\em Proc. Amer. Math. Soc.}, 32:1--7, 1972.

\bibitem{gupta-sidki_are_cs}
J.~S. Wilson and P.~A. Zalesskii.
\newblock Conjugacy separability of certain torsion groups.
\newblock {\em Arch. Math. (Basel)}, 68(6):441--449, 1997.

\end{thebibliography}
\end{document}